\documentclass[a4paper,12pt]{article}

\usepackage{mathtools}
\usepackage[latin1]{inputenc}
\usepackage[all]{xy}
\usepackage{float}
\usepackage{amsmath}
\usepackage{amssymb}
\usepackage{amsfonts}
\usepackage{amsthm}
\usepackage{enumerate}
\usepackage{epstopdf}
\usepackage{color,fancybox,graphicx}
\usepackage{url}
\usepackage{psfrag}
\usepackage{color}
\usepackage{graphicx,psfrag}
\usepackage{epsfig}



\renewcommand{\leq}{\leqslant}
\renewcommand{\ge}{\geqslant}
\renewcommand{\geq}{\geqslant}

\newcommand{\cadlag}{c\`adl\`ag }

\newcommand{\E}{\mathbb{E}}

\newcommand{\N}{\mathbb{N}}
\newcommand{\M}{\mathbb{M}}
\newcommand{\R}{\mathbb{R}}

\renewcommand{\P}{\mathbb{P}}
\renewcommand{\L}{\mathbb{L}}

\newcommand{\calL}{\mathcal{L}}

\newcommand{\calM}{\mathcal{M}}
\newcommand{\calD}{\mathcal{D}}
\newcommand{\calF}{\mathcal{F}}

\newcommand{\calG}{\mathcal{G}}

\newcommand{\eps}{\varepsilon}
\newcommand{\ph}{\varphi}



\renewcommand{\d}{ {\, d}}

\newcommand{\eqdef}{:=}

\newcommand{\dps}{\displaystyle}

\newcommand{\abs}[1]{\left | #1\right |}
\newcommand{\set}[1]{\left\{#1\right\}}

\newcommand{\p}[1]{ \left(#1\right) }
\renewcommand{\b}[1]{ \left [#1\right ] }

\newcommand{\norm}[1]{\left\Vert#1\right\Vert}
\newcommand{\bracket}[1]{\left \langle #1\right \rangle}

 
 

\theoremstyle{plain}
\newtheorem{The}{Theorem}[section]
\newtheorem{Lem}[The]{Lemma}
\newtheorem{Pro}[The]{Proposition}

\newtheorem{Cor}[The]{Corollary}
\newtheorem*{Ass}{Assumption}

\newtheorem{Def}[The]{Definition}

\numberwithin{equation}{section}

\theoremstyle{definition}
\newtheorem{Rem}[The]{Remark}

\setlength{\parindent}{0cm}
\numberwithin{equation}{section}

\newcommand{\un}{{\mathbf{1}}}

\newcommand{\V}{\mathbb V}

\def\R{\mathbb{R}}
\def\P{\mathbb{P}}
\def\E{\mathbb{E}}

\begin{document}

\begin{center}
{\sc \Large A Central Limit Theorem
for Fleming-Viot Particle Systems with Soft Killing\footnote{This work was partially supported by the French Agence Nationale de la Recherche, under grant ANR-14-CE23-0012, and by the European Research Council under the European Union's Seventh Framework Programme (FP/2007-2013) / ERC Grant Agreement number 614492.}}
\vspace{0.5cm}

\end{center}

{\bf Fr\'ed\'eric C\'erou\footnote{Corresponding author.}}\\
{\it INRIA Rennes \& IRMAR, France }\\
\textsf{frederic.cerou@inria.fr}
\bigskip

{\bf Bernard Delyon}\\
{\it Universit\'e Rennes 1 \& IRMAR, France }\\
\textsf{bernard.delyon@univ-rennes1.fr}
\bigskip

{\bf Arnaud Guyader}\\
{\it Universit\'e Pierre et Marie Curie \& CERMICS, France }\\
\textsf{arnaud.guyader@upmc.fr}
\bigskip

{\bf Mathias Rousset}\\
{\it INRIA Rennes \& IRMAR, France }\\
\textsf{mathias.rousset@inria.fr}
\bigskip

\medskip

\begin{abstract}
\noindent {\rm The distribution of a Markov process with killing, conditioned to be still alive at a given time, can be approximated by a Fleming-Viot type particle system. In such a system, each particle is simulated independently according to the law of the underlying Markov process, and branches onto another particle at each killing time. The consistency of this method in the large population limit was the subject of several recent articles. In the present paper, we go one step forward and prove a central limit theorem for the law of the Fleming-Viot particle system at a given time under two conditions: a ``soft killing'' assumption and  a boundedness condition involving the ``carr\'e du champ'' operator of the underlying Markov process. 
\medskip

\noindent \emph{Index Terms} --- Sequential Monte Carlo, Interacting particle systems, Process with killing\medskip

\noindent \emph{2010 Mathematics Subject Classification}: 82C22, 82C80, 65C05, 60J25, 60K35, 60K37}

\end{abstract}

\tableofcontents
\section{Introduction}\label{intro}

\paragraph{Context and notation}

Let $X=(X_t)_{t\geq 0}$ be a Markov process evolving in $F\cup\{\partial\}$, where $\partial\notin F$ is absorbing and $F$ is the state space. Specifically, $X$ evolves in $F$ until it reaches $\partial$ and then remains trapped in this cemetery point forever. The initial distribution is denoted $\eta_0$, with the assumption that $\eta_0(\{\partial\})=0$.  Let us also denote $\tau_\partial$ the killing time of this process, meaning that
$$\tau_\partial\eqdef  \inf\{t\geq 0, X_t=\partial\}.$$
Then, given a deterministic final time $T>0$, we are interested both in the distribution of $X_T$ given that it has still not been killed at time $T$, denoted
$$\eta_T\eqdef  {\cal L}(X_T|\tau_\partial>T)={\cal L}(X_T|X_T\neq\partial),$$
and in the probability of this event, that is
$$p_T\eqdef  \P(\tau_\partial>T)=\P(X_T\neq\partial).$$
A crude Monte Carlo method in order to approximate these quantities consists in:
\begin{itemize}
\item simulating $N$ i.i.d.~random variables, also called particles in the present work, 
$$X_0^1,\dots,X_0^N\ \overset{\rm i.i.d.}{\sim}\ \eta_0,$$ 
\item letting them evolve independently according to the dynamic of the underlying process $X$, 
\item and eventually considering the estimators
$$\hat{\eta}_T^N\eqdef  \frac{\sum_{i=1}^N \un_{X_T^i\in F}\ \delta_{X_T^i}}{\sum_{i=1}^N\un_{X_T^i\in F}}\hspace{1cm}\mbox{and}\hspace{1cm}\hat{p}_T^N\eqdef  \frac{\sum_{i=1}^N\un_{X_T^i\in F}}{N},$$
with the convention that $0/0=0$.
\end{itemize}
It is readily seen that if $\partial$ is attractive, these estimators will not be relevant for large $T$ since we face a rare event estimation problem. A possible way to tackle this issue is to approximate the quantities at stake through a Fleming-Viot type particle system \cite{bhim96,v14}. Under assumptions that will be detailed below, the following algorithm is (almost surely) well defined:
\begin{Def}[Fleming-Viot particle system]\label{peicj}
The Fleming-Viot particle system $(X^1_t, \cdots, X^N_t)_{t \in [0,T]}$ is the Markov process with state space $F^N$ defined by:
\begin{itemize}
\item Initialization: consider $N$ i.i.d.~particles
$$X_0^1,\dots,X_0^N\ \overset{\rm i.i.d.}{\sim}\ \eta_0,$$
\item Evolution and absorption (or killing): each particle evolves independently according to the law of the underlying Markov process $X$ until one of them is absorbed in the cemetery point $\partial$,
\item Branching (or rebirth): the absorbed particle is taken from $\partial$, and is given instantaneously the state of one of the $(N-1)$ other particles (randomly uniformly chosen),
\item and so on until final time $T$.
\end{itemize}
\end{Def}
Finally, consider the estimators
$$\eta_T^N\eqdef  \frac{1}{N}\sum_{i=1}^N\delta_{X_T^i}\hspace{1cm}\mbox{and}\hspace{1cm}p_T^N\eqdef  \left(1-\tfrac{1}{N}\right)^{B_T},$$
where $B_T=B_T^N$ is the total number of branchings of the particle system until final time $T$.\medskip

Under very general assumptions, Villemonais \cite{v14} proves among other things that $p_T^N$ converges in probability to $p_T$ when $N$ goes to infinity, and that $\eta_T^N$ converges in law to $\eta_T$.\medskip

The purpose of this paper is to go one step further and to establish, under more restrictive assumptions, central limit results for $\eta_T^N$ and $p_T^N$. It turns out that both quantities can be handled by considering the unnormalized measure
$$\gamma_T\eqdef  p_T \eta_T,$$
and its empirical approximation 
$$\gamma_T^N\eqdef  p_T^N \eta_T^N.$$
Note that for any $t\in[0,T]$, one has $p_t=\gamma_t(\un_F)=\P(\tau_\partial>t),$
so that $p_0=1$.\medskip

Let us also introduce the semi-group operator $Q^h$ defined for any bounded measurable function $\ph:F\cup\{\partial\}\to\R$, for any $x\in F\cup\{\partial\}$ and for any $h\geq0$ by
$$Q^h\ph(x) \eqdef \E_x[\ph(X_h)].$$
By convention, if $\ph$ is defined on $F$ then we extend it on $F\cup\{\partial\}$ by setting $\ph(\partial)=0$, in which case we have $Q^h\ph(\partial)=0$  for all $h\geq 0$. \medskip

Furthermore, for any probability distribution $\mu$ on $F$ and any bounded measurable function $\ph:F\to\R$, the standard notation $\V_\mu(\ph)$ stands for the variance of the random variable $\ph(Y)$ when $Y$ is distributed according to $\mu$, i.e.
$$\V_\mu(\ph)\eqdef  \V(\ph(Y))=\E[\ph(Y)^2]-\E[\ph(Y)]^2=\mu(\ph^2)-\mu(\ph)^2.$$

\paragraph{Main assumptions}

The ``soft killing'' assumption~(SK) that will be specified in Section~\ref{AMEKCN} requires that there exists a bounded function $\lambda:F\to\R^+$ defining the intensity of the absorption in $\partial$ from point $x$, meaning that, for all $x\in F$, 
\begin{equation}\label{apmicjk}
\lambda(x)\eqdef \lim_{h\downarrow 0}\frac{\P(X_{t+h}=\partial | X_t=x)}{h}.
\end{equation}
Said differently, the mapping $t\mapsto p_t=\P(\tau_\partial>t)$ is differentiable on $[0,T]$ with derivative
\begin{equation}\label{apmicj}
p'_t \eqdef  \frac{\d}{\d t} p_t =\frac{\d}{\d t}\gamma_t(\un_F)=-\E\left[\lambda(X_t)\un_{t< \tau_\partial}\right]=-\gamma_t(\lambda\un_F)=-\gamma_t(\lambda),
\end{equation}
since by convention $\lambda(\partial)=0$.

\medskip

We will also need a  ``carr\'e du champ'' assumption~(CC) that will be specified in Section~\ref{AMEKCN}. 
This assumption defines a set of sufficiently regular bounded test functions $\ph$, and is related to the regularity of the underlying Markov process. It is satisfied for instance by:
\begin{itemize}
\item any bounded function $\ph$ for Piecewise Deterministic Markov Processes with bounded jump intensity (see Section~\ref{sec:examples});
\item  any smooth function $\ph$ for regular enough diffusions (see Section~\ref{sec:examples}).
\end{itemize}

\paragraph{Result}

The main result of this paper says that, under Assumption~(SK) and for any $\ph$ in the $\norm{ \cdot }_\infty$-closure of the set of functions satisfying Assumption~(CC), we have
$$\sqrt{N}\left(\gamma_T^N(\ph)-\gamma_T(\ph)\right)\xrightarrow[N\to\infty]{\cal D}{\cal N}(0,\sigma_T^2(\ph)),$$
where
\begin{equation}\label{eq:var_00}
\sigma_T^2(\ph) \eqdef p^2_T \V_{\eta_T}(\ph) - p_T^2\ln(p_T) \, \eta_T(\ph)^2 - 2\int_0^T \V_{\eta_{t}}(Q^{T-t}(\ph)) p_t p'_t  \d t.
\end{equation}

Then it suffices to take $\ph=\un_F$ to get a central limit result for $p_T^N=\gamma_T^N\left(\un_F\right)$, and to consider the decomposition 
$$\eta_T^N\left(\ph\right)-\eta_T\left(\ph\right)=\frac{\gamma_T(\un_F)}{\gamma_T^N(\un_F)} \frac{1}{p_T} \gamma_T^N(\ph-\eta_T\left(\ph\right))$$
to deduce a  central limit result for $\eta_T^N\left(\ph\right)$.\medskip

Before proceeding, let us mention that in Section~$3$ of~\cite{dm00}, Del Moral and Miclo also propose a central limit theorem for a class of interacting particle systems. However, there are some significant differences with the Fleming-Viot algorithm of Definition \ref{peicj}:
\begin{enumerate}[(i)]
 \item They consider unnormalized semi-groups with growing (instead of decreasing) probability mass. Therefore the killing in our context is replaced by a splitting in their algorithm. Namely, when a branching event occurs, the particle splits into two new particles and a uniformly chosen other particle is killed in order to control the population size. Their splitting is also assumed ``soft'', with bounded intensity.
 \item In their context, the probability $\P(\tau_\partial > T)$ is estimated by the analog of $\exp(- \int_0^T \eta^N_t(\lambda) \d t)$. The latter is an exponential kind of compensator computed from the Dol\' eans-Dade exponential of the branching counting process.
 \item As a consequence, their asymptotic variance is substantially different from ours, though both have a similar structure. 
\end{enumerate}
Therefore, even if the present work shares some features with \cite{dm00}, these differences modify many crucial aspects of the proof, including the final variance formula and the calculation of the quadratic variation of martingales.\medskip


As far as we know, there is still no CLT result in the case of ``hard killing'' (see discussion below), and this case seems more challenging. Nevertheless, there is a cluster of papers considering the hard killing case where $X_t$ is a diffusion process in a bounded domain of $\R^d$ killed when it hits the domain boundary. Among other questions, the convergence of the empirical measures as $N$ goes to infinity is addressed in \cite{BBF12,GK04,lobus} (see also references therein). This case is also included in the general convergence results of \cite{v14}.\medskip 
   
\paragraph{Examples and counter-examples of soft killing}

Before going into more detail on the precise statements of our results, we expose a few elementary examples to explain what we mean by ``soft killing'' in order to circumscribe the scope of this paper.\medskip

Our first example is a classical ruin problem in non-life insurance (see for example \cite{aa10,m04}). In this context, $S_t$ represents the insurance portfolio at time $t$, defined by
$$S_t=s_0+ct-\sum_{i=1}^{N_t}Y_i,$$
where $s_0$ is the initial value of the portfolio, $c$ is the premium rate, $(N_t)_{t\geq0}$ is an homogeneous Poisson process with intensity $\theta$, called the claim number process, and $Y_1,Y_2,\ldots$ is a sequence of i.i.d.~non-negative random variables called the claim sizes or claim severities.\medskip

Here a killing corresponds to the ruin of the insurance company and the killing time is thus defined as $\tau_\partial=\inf\{t\geq0,\ S_t<0\}$. In this case, $F=[0,\infty)$ and the Markov process with killing $(X_t)_{t\geq 0}$ with values in $F\cup\{\partial\}$ is defined by $X_t=S_t$ if $t<\tau_\partial$ and $X_t=\partial$ if $t\geq\tau_\partial$. A standard result (see for example \cite{aa10}, Chapter IV, Corollary 1.4) says that, for any $x_0\geq 0$,
$$\tau_\partial<+\infty\ \ a.s.\ \Longleftrightarrow\ c\leq \theta \E[Y_1],$$
in which case the ruin is an attractive state. In this situation, denoting $F_Y(y)=\P(Y_1\leq y)$ the cdf of $Y_1$, we have for any $x\geq0$,
$$\lambda(x)\eqdef  \lim_{h\downarrow 0}\frac{\P(X_{t+h}=\partial | X_t=x)}{h}=\lim_{h\downarrow 0}\frac{\P(S_{t+h}<0 | S_t=x)}{h}=\theta(1-F_Y(x)),$$
so that $\|\lambda\|_\infty<\infty$. As we will see in Section \ref{sec:examples}, this is an example of Piecewise Deterministic Markov Process (PDMP for short) to which our results will apply.\medskip

By contrast, our second example illustrates the notion of ``hard killing''. It deals with another family of PDMP, called Additive-Increase Multiplicative-Decrease Markov processes. From an application viewpoint, these processes have connections with methods like TCP/IP  (Transmission Control Protocol/Internet Protocol) to control congestion in communication networks (see for example \cite{dgr02}).\medskip

This time, $(S_t)_{t\geq 0}$ is defined by $S_0=s_0\geq 0$ and, for all $t>0,$
$$S_t=\left\{\begin{array}{ll}
S_{T_n}+t-T_n&\mbox{if}\ T_n\leq t<T_{n+1}\\
Q_{n+1}(S_{T_n}+T_{n+1}-T_n)&\mbox{if}\ t=T_{n+1}
\end{array}\right.$$
where $T_0=0$, $(T_n)_{n\geq 1}$ are the arrival times of a homogeneous Poisson process, and $Q_1,Q_2,\ldots$ are i.i.d.~non-negative random variables with values in $[0,1)$.\medskip

In this context, a killing happens when $S_t$ reaches a fixed given value $s_{\max}$, which means that $\tau_\partial=\inf\{t>0,\ S_t=s_{\max}\}$. Hence, $F=[0,s_{\max})$ and the Markov process with killing $(X_t)_{t\geq 0}$ with values in $F\cup\{\partial\}$ is defined by $X_t=S_t$ if $t<\tau_\partial$ and $X_t=\partial$ if $t\geq\tau_\partial$. For any $x$ in $[0,s_{\max})$, we have that
$$\lim_{h\downarrow 0}\frac{\P(X_{t+h}=\partial | X_t=x)}{h}=\lim_{h\downarrow 0}\frac{\P(S_{t+h}\geq s_{\max} | X_t=x)}{h}=0.$$
Clearly,  (\ref{apmicjk}) and (\ref{apmicj}) cannot be satisfied simultaneously. As we will see in Section \ref{AMEKCN}, the central limit theorems of the present article will not apply in this kind of situation.\medskip

Note that in the framework of general stochastic calculus, the difference between ``hard killing'' and ``soft killing'' can be interpreted through the dichotomy between \emph{predictable} stopping times and \emph{totally inaccessible} stopping times. We refer the interested reader to Chapter I, Section~$2$, of~\cite{js03} for definitions, together with Theorem~$2.22$ for results on this topic.\medskip 


Finally, these notions of hard and soft killing are also related to hard and soft obstacles. 
To keep it simple, consider a physics particle whose motion is for example given by a diffusion process $X$. The particle can evolve in free space or encounter an obstacle. In free space, the particle's trajectory is given by $X$ until it reaches an obstacle. If the obstacle is ``hard'' then the particle is killed as soon as it touches it. If the obstacle is ``soft'', its dynamics is unchanged, but it is killed with some given intensity  $\lambda>0$ as long as it stays in the obstacle. An interesting subcase of the ``hard'' case is the particle in a box: the free space is the interior of a compact set, and the obstacle is all the rest. In the ``hard'' case, we cannot define a killing intensity for the same reason as in the TCP/IP case above.\medskip
 
Our central limit theorem below will apply only in the ``soft'' case, as our proof relies on the existence of a killing intensity $\lambda$. Even if expression \eqref{eq:var_00} for the asymptotic variance can make sense without this killing intensity, whether or not there is a central limit theorem for ``hard'' obstacles is still an open question.

\section{Main result}\label{AMEKCN}

\subsection{Notation and assumptions}\label{zcijo}

Throughout the paper, for the sake of simplicity, the state space $F$ is assumed Polish, and the underlying Markov process $X=(X_t)_{t\geq 0}$ is assumed to be c\`adl\`ag, although the specific topology will not play any role (see Section~\ref{sec:def_ips} for comments on a setting without topology). Besides, the process is assumed time-homogeneous with associated  Markov semi-group $(Q^t)_{t \geq 0}$. Specifically, $X=(X_t)_{t\geq 0}$ evolves in $F\cup\{\partial\}$, where $\partial\notin F$ is absorbing, and with absorption time $\tau_\partial$.
More importantly, we assume that this Markov process with killing is Markov with respect to the minimal \emph{right-continuous} filtration it generates. More comments on this topic are provided in Section~\ref{sec:def_ips}.
Our first assumption allows us to define an intensity of absorption for the process and was already heuristically discussed in the introduction.

\begin{Ass}[{\bf SK}](Soft Killing assumption)\label{ass:lambdma} There is a bounded measurable function $\lambda: F \to \R^+$ such that, for any initial distribution of $X_0$, the process
 \[
t\mapsto  \un_{X_t=\partial} - \int_0^{t \wedge \tau_\partial} \lambda(X_s) \d s
 \]
is a martingale with respect to the minimal right-continuous filtration generated by $X$.
 \end{Ass}
 
Let us also recall that by the Markov property, the process $t \mapsto Q^{T-t}(\ph)(X_t)$ is a martingale with a c\`adl\`ag version (see~\cite{ry99} Chapter~II, or the proof of Lemma~\ref{lem:mart0}).

\begin{Ass}[{\bf CC}](Carr\'e du Champ assumption)\label{ass:gamma} A bounded measurable function $\ph:F \to \R$ is said to satisfy Assumption~(CC) if there is a measurable function $(t,x) \in \R^+ \times F \mapsto \Gamma_{t}(\ph)(x)$ satisfying for any $T > 0$
  $$ \dps \int_0^T \norm{\Gamma_{t} (\ph) }_\infty \d t < +\infty,$$ 
  and such that, for any initial distribution of $X_0$, the process
  $$
 t \in [ 0,T ] \mapsto \p{  Q^{T-t}(\ph)(X_t) }^2  - \int_0^{t \wedge \tau_\partial} \Gamma_{T-s}(\ph)(X_s) \d s
  $$
is a martingale for the minimal right-continuous filtration generated by $X$. 
\end{Ass}

We will comment on these assumptions and provide some specific examples in Section \ref{pmazioch}.

\subsection{Main result}\label{sec:main}

We keep the notation of Section~\ref{intro}. In particular, $(X^1_t, \ldots, X^N_t)_{t \geq 0}$ denotes the c\`adl\`ag Fleming-Viot particle system. The filtration $(\calF_t)_{t \geq 0}$ is the minimal right-continuous filtration generated by this particle system (see also Section~\ref{sec:def_ips} for details and comments).\medskip

For any $n \in \set{1, \ldots , N}$ and any $k \geq 1$, we denote by $\tau_{n,k}$
the $k$-th branching time of particle $n$, with the convention $\tau_{n,0} =0$. Moreover, for any $j \geq 1$, we denote by $\tau_{j}$ the $j$-th branching time of the whole system of particles. Accordingly, the processes
$$
B^{n}_t\eqdef   \sum_{k\geq 1} \un_{\tau_{n,k}\leq t}
$$
and  
$$
B_t\eqdef   \sum_{n=1}^N B^{n}_t= \sum_{j\geq 1} \un_{\tau_{j}\leq t}
$$
are c\`adl\`ag counting processes that correspond respectively to the number of branchings of particle $n$ before time $t$, and to the total number of branchings of the whole particle system before time $t$.\medskip

As mentioned before, we can then define the empirical measure associated to the particle system as 
$ \eta^N_t\eqdef   \frac{1}{N} \sum_{n=1}^N \delta_{X_t^n},$
while the estimate of the probability that the process is still not killed at time $t$ is denoted 
$p^N_t\eqdef   (1-\tfrac{1}{N})^{B_t},$
and the unnormalized empirical measure is defined as $\gamma^N_t\eqdef   p^{N}_t \eta^N_t$.\medskip

As will be recalled in Proposition \ref{pro:estimate} and already noticed by Villemonais in \cite{v14}, for any bounded $\ph$, their large $N$ limits are respectively 
$\eta_t(\varphi)\eqdef\E [\varphi(X_t)|X_t\neq\partial]$,
$p_t\eqdef \P(X_t\neq\partial)$, and
$\gamma_t(\varphi) \eqdef \E[\varphi(X_t)\un_{X_t\neq\partial}]$.
We clearly have $\eta_t(\varphi)=\gamma_t(\varphi)/\gamma_t(\un_F)=\gamma_t(\varphi)/p_t$ and $\gamma_t(\varphi)=\eta_0(Q^t \varphi)$.\medskip 

By Assumption~(SK), we get
$$p_t=\gamma_t(\un_F)=\P(X_t\neq\partial)=1-\E\left[\int_0^{t} \lambda(X_s)\un_{s\leq \tau_\partial} \d s\right].$$
Since $\lambda$ is assumed bounded, this ensures that the mapping $t\mapsto p_t$ is differentiable with derivative
$$p'_t = \frac{\d}{\d t} p_t =\frac{\d}{\d t}\gamma_t(\un_F)=-\E\left[\lambda(X_t)\un_{t\leq \tau_\partial}\right]=-\gamma_t(\lambda\un_F)=-\gamma_t(\lambda).$$
We can now expose the main result of the paper.

\begin{The}\label{gamma}
Denote by ${\cal D}$ the set of bounded measurable functions satisfying Assumption~(CC) and by $\overline{\cal D}$ its closure with respect to the uniform norm $\norm{\cdot}_\infty$. Then, under Assumption~(SK), for any $\ph$ in $\overline{\cal D}$, one has
$$\sqrt{N}\left(\gamma_T^N(\ph)-\gamma_T(\ph)\right)\xrightarrow[N\to\infty]{\cal D}{\cal N}(0,\sigma_T^2(\ph)),$$
where
$$\sigma_T^2(\ph) = p^2_T \V_{\eta_T}(\ph) - p_T^2\ln(p_T) \, \eta_T(\ph)^2 - 2\int_0^T \V_{\eta_{t}}(Q^{T-t}(\ph)) p_t p'_t  \d t.$$
\end{The}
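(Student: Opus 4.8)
The plan is to reduce the theorem to a martingale central limit theorem for the single functional
$$M^N_t \eqdef \gamma^N_t(Q^{T-t}\ph) = p^N_t\,\eta^N_t(Q^{T-t}\ph),\qquad t\in[0,T],$$
together with a classical central limit theorem for the initialization. First I would show that $(M^N_t)_{t\in[0,T]}$ is an $(\calF_t)$-martingale. Between two branchings each particle follows the free (un-killed) dynamics, along which $t\mapsto Q^{T-t}\ph(X^n_t)$ has infinitesimal drift $\lambda\,Q^{T-t}\ph(X^n_t)$, because $t\mapsto Q^{T-t}\ph(X_t)$ is a martingale for the \emph{killed} process, whose generator is the free generator minus the multiplication operator by $\lambda$. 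At each branching the killed particle is relocated onto a uniformly chosen survivor while $p^N$ is multiplied by $(1-\tfrac1N)$; computing the generator of $M^N$ by combining the time derivative, the free spatial part, and the killing/rebirth jumps — the last of which, after averaging the rebirth target over the $N-1$ survivors, produces exactly the compensating drift $-p^N_t\,\eta^N_t(\lambda\,Q^{T-t}\ph)$ — all drift terms cancel. Since $M^N_T=\gamma^N_T(\ph)$ whereas $\E[M^N_0]=\eta_0(Q^T\ph)=\gamma_T(\ph)$, I would then split
$$\sqrt N\left(\gamma^N_T(\ph)-\gamma_T(\ph)\right)=\sqrt N\left(M^N_0-\gamma_T(\ph)\right)+\sqrt N\left(M^N_T-M^N_0\right),$$
the first term being a normalized sum of i.i.d.\ centered variables, hence convergent to $\calN(0,\V_{\eta_0}(Q^T\ph))$.

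The heart of the proof is the dynamics term $\sqrt N(M^N_T-M^N_0)$. I would compute the predictable quadratic variation $\langle M^N\rangle$, splitting it into a continuous part (the within-particle diffusive fluctuations, independent across particles, at rate the continuous carr\'e du champ $\Gamma^{\rm c}_{T-t}(\ph)$) and a pure-jump part (the branchings). Writing the branching increment of $M^N$ explicitly and compensating it at rate $\lambda$, the leading order in $N$ reassembles — via the identity $\Gamma_{T-t}(\ph)=\Gamma^{\rm c}_{T-t}(\ph)+\lambda\,(Q^{T-t}\ph)^2$ relating the killed carr\'e du champ of Assumption~(CC) to the free one — into
$$N\,\frac{\d}{\d t}\langle M^N\rangle_t \;\approx\; (p^N_t)^2\left(\eta^N_t(\Gamma_{T-t}(\ph))+\eta^N_t(\lambda)\,\V_{\eta^N_t}(Q^{T-t}\ph)\right).$$
Using the consistency of the system ($\eta^N_t\to\eta_t$ and $p^N_t\to p_t$, as established in the preliminary estimates) and the domination $\int_0^T\norm{\Gamma_t(\ph)}_\infty\d t<\infty$ from Assumption~(CC), together with $p'_t=-p_t\,\eta_t(\lambda)$, I would show that $N\langle M^N\rangle_T$ converges in probability to $\sigma^2_{\rm dyn}\eqdef\int_0^T\left(p_t^2\,\eta_t(\Gamma_{T-t}(\ph))-p_tp'_t\,\V_{\eta_t}(Q^{T-t}\ph)\right)\d t$. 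As the jumps of $\sqrt N\,M^N$ are uniformly $O(1/\sqrt N)$ (because $\norm{Q^{T-t}\ph}_\infty\le\norm{\ph}_\infty$ and $p^N_t\le1$) the conditional Lindeberg condition is automatic, so the martingale central limit theorem applies; since the limiting bracket is deterministic the convergence is stable, the dynamics term is asymptotically independent of $\calF_0$, and the two Gaussians add to total variance $\V_{\eta_0}(Q^T\ph)+\sigma^2_{\rm dyn}$.

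It remains to recast this variance in the stated form and to extend it to $\overline{\calD}$. Taking expectations in the Assumption~(CC) martingale started from $\eta_0$ gives $\gamma_t((Q^{T-t}\ph)^2)=\eta_0((Q^T\ph)^2)+\int_0^t\gamma_s(\Gamma_{T-s}(\ph))\d s$, whence $\frac{\d}{\d t}\gamma_t((Q^{T-t}\ph)^2)=\gamma_t(\Gamma_{T-t}(\ph))$. Differentiating $h(t)\eqdef p_t\,\gamma_t((Q^{T-t}\ph)^2)=p_t^2\,\eta_t((Q^{T-t}\ph)^2)$ and integrating over $[0,T]$ (an integration by parts through $p'_t$) turns $\V_{\eta_0}(Q^T\ph)+\sigma^2_{\rm dyn}$ into $p_T^2\V_{\eta_T}(\ph)-p_T^2\ln(p_T)\,\eta_T(\ph)^2-2\int_0^T\V_{\eta_t}(Q^{T-t}\ph)\,p_tp'_t\d t$, after inserting $\eta_t(Q^{T-t}\ph)=\gamma_T(\ph)/p_t$ and $\int_0^T(p'_t/p_t)\d t=\ln p_T$. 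Finally, the passage from $\ph\in\calD$ to $\ph\in\overline{\calD}$ uses uniform approximation: the stated variance depends on $\ph$ only through $Q^{T-t}\ph$ and is $\norm{\cdot}_\infty$-continuous, while a uniform-in-$N$ $L^2$ bound on $\sqrt N(\gamma^N_T(\psi)-\gamma_T(\psi))$ controlled by $\norm{\psi}_\infty$ lets one interchange the limits.

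The main obstacle I anticipate is the convergence of the predictable quadratic variation $N\langle M^N\rangle_T$. One must carry out the combinatorial expansion of the rebirth increments to the exact order in $N$, correctly reassemble the continuous and jump contributions into $\Gamma_{T-t}(\ph)$, and then justify passing to the deterministic limit of time-integrated empirical functionals such as $\int_0^T(p^N_t)^2\,\eta^N_t(\Gamma_{T-t}(\ph))\d t$. This last point is the most delicate, requiring a law of large numbers for $\eta^N_t$ strong enough in $t$ together with the $L^1(\d t)$ domination supplied by Assumption~(CC); by contrast the martingale identity is structural, and the final recasting of the variance is deterministic calculus.
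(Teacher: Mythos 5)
Your proposal is correct and follows essentially the same route as the paper: the martingale $t\mapsto\gamma^N_t(Q^{T-t}\ph)$, the computation of its predictable quadratic variation (continuous carr\'e-du-champ part plus compensated branching jumps yielding the $\eta^N_t(\Gamma_{T-t}(\ph))+\eta^N_t(\lambda)\V_{\eta^N_t}(Q^{T-t}\ph)$ density), convergence of $N\bracket{M,M}_T$ to a deterministic limit, a martingale CLT accommodating the nontrivial initial condition, the integration-by-parts simplification of the variance, and the $\norm{\cdot}_\infty$-density argument via the uniform $L^2$ estimate. The only differences are organizational (you compensate the combined killing-plus-relocation jump directly where the paper splits it into orthogonal $\M$ and $\calM$ martingales, and you invoke stable convergence where the paper extends the Ethier--Kurtz theorem by a change of measure), neither of which changes the substance.
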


As we will see, we prove in fact a somehow stronger result. Indeed, Corollary \ref{lazicj} explains that for any $\ph$ in ${\cal D}$, the martingale $(Z_t^N)_{0\leq t\leq T}$ defined by
 $$Z_t^N:=\sqrt{N}\left(\gamma_t^N(Q^{T-t}(\ph))-\gamma_0(Q^{T}(\ph))\right)$$
converges in law towards a Gaussian process $(Z_t)_{t \in [0,T]}$ with independent increments, initial distribution ${\cal N}(0,\V_{\eta_0}(Q^{T}(\ph)))$, and variance function 
\begin{equation}\label{mpoecj}
\sigma_t^2(\ph)=\V_{\eta_0}(Q^{T-t}\ph)+  \int_0^{t} \b{ \eta_{s} \p{\Gamma_{T-s}(\ph)} + \V_{\eta_{s}}(Q^{T-s}(\ph))     \eta_s(\lambda) }   p_{s}^2 \d s.
\end{equation} 
In particular, thanks to a density argument, Corollary \ref{corbis} ensures that for any $\ph$ in $\overline{\cal D}$, one has 
$$\sqrt{N}\left(\gamma_T^N(\ph)-\gamma_T(\ph)\right)\xrightarrow[N\to\infty]{\cal D}{\cal N}(0,\sigma_T^2(\ph)),$$
and we eventually explain why expression (\ref{mpoecj}) for $\sigma_T^2(\ph)$ indeed coincides with the one given in Theorem \ref{gamma}.\medskip

However, coming back to Theorem \ref{gamma}, the CLT for $\eta_T^N$ is then a straightforward application of this result by considering the decomposition 
$$\eta_T^N\left(\ph\right)-\eta_T\left(\ph\right)=\frac{\gamma_T(\un_F)}{\gamma_T^N(\un_F)} \frac{1}{p_T} \gamma_T^N(\ph-\eta_T\left(\ph\right))$$
and the fact that $\gamma_T^N(\un_F)$ goes in probability to $\gamma_T(\un_F)$ (see Proposition \ref{pro:estimate}).

\begin{Cor}\label{eta}
Under Assumption~(SK), for any $\ph$ in $\overline{\cal D}$, one has
$$\sqrt{N}\left(\eta_T^N(\ph)-\eta_T(\ph)\right)\xrightarrow[N\to\infty]{\cal D}{\cal N}(0,\sigma_T^2(\ph-\eta_T(\ph))/p_T^2).$$
\end{Cor}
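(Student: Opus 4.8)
The plan is to reduce the statement to Theorem~\ref{gamma} by an elementary Slutsky-type argument, following the decomposition already displayed just before the corollary. Set $\psi \eqdef \ph - \eta_T(\ph)\un_F$, which is again a bounded measurable function. The starting point is the identity
$$\sqrt{N}\left(\eta_T^N(\ph)-\eta_T(\ph)\right)=\frac{\gamma_T(\un_F)}{\gamma_T^N(\un_F)}\,\frac{1}{p_T}\,\sqrt{N}\,\gamma_T^N(\psi),$$
which follows from $\eta_T^N(\ph)=\gamma_T^N(\ph)/\gamma_T^N(\un_F)$, from $\gamma_T^N(\un_F)=p_T^N$, and from $\gamma_T^N(\psi)=\gamma_T^N(\ph)-\eta_T(\ph)\gamma_T^N(\un_F)$. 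The crucial observation is that $\psi$ is $\gamma_T$-centered: indeed $\gamma_T(\psi)=\gamma_T(\ph)-\eta_T(\ph)\gamma_T(\un_F)=\gamma_T(\ph)-\eta_T(\ph)p_T=0$, since $\eta_T(\ph)=\gamma_T(\ph)/p_T$. Hence $\sqrt{N}\,\gamma_T^N(\psi)=\sqrt{N}\big(\gamma_T^N(\psi)-\gamma_T(\psi)\big)$ is exactly a quantity governed by Theorem~\ref{gamma}.

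To apply Theorem~\ref{gamma} to $\psi$ rather than $\ph$, I first check that $\psi\in\overline{\cal D}$. Since $\overline{\cal D}$ is the uniform closure of $\cal D$, this amounts to knowing that $\cal D$ is a vector space containing $\un_F$. The membership $\un_F\in\overline{\cal D}$ has already been used to derive the CLT for $p_T^N=\gamma_T^N(\un_F)$. Stability of $\cal D$ under linear combinations is the one point requiring a little care, because Assumption~(CC) involves the square $(Q^{T-t}(\cdot)(X_t))^2$; however, writing $M^i_t=Q^{T-t}(\ph_i)(X_t)$ for two elements $\ph_1,\ph_2$ of $\cal D$, the cross term in $(M^1_t+M^2_t)^2$ is controlled by the predictable covariation $\langle M^1,M^2\rangle$, which by the Kunita--Watanabe inequality is absolutely continuous with density bounded by $\sqrt{\Gamma_{T-s}(\ph_1)\,\Gamma_{T-s}(\ph_2)}$, so that the integrability condition of~(CC) for the sum follows from Cauchy--Schwarz. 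Consequently $\psi\in\overline{\cal D}$ and Theorem~\ref{gamma} yields
$$\sqrt{N}\,\gamma_T^N(\psi)\xrightarrow[N\to\infty]{\cal D}{\cal N}\!\left(0,\sigma_T^2(\psi)\right).$$

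It remains to handle the prefactor. By Proposition~\ref{pro:estimate}, $\gamma_T^N(\un_F)=p_T^N$ converges in probability to $\gamma_T(\un_F)=p_T>0$, hence
$$\frac{\gamma_T(\un_F)}{\gamma_T^N(\un_F)}\,\frac{1}{p_T}\xrightarrow[N\to\infty]{\P}\frac{1}{p_T}.$$
Slutsky's lemma applied to the product of this prefactor (with deterministic limit $1/p_T$) and the asymptotically Gaussian factor $\sqrt{N}\,\gamma_T^N(\psi)$ gives
$$\sqrt{N}\left(\eta_T^N(\ph)-\eta_T(\ph)\right)\xrightarrow[N\to\infty]{\cal D}\frac{1}{p_T}\,{\cal N}\!\left(0,\sigma_T^2(\psi)\right)={\cal N}\!\left(0,\sigma_T^2(\ph-\eta_T(\ph))/p_T^2\right),$$
which is the desired conclusion. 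The only genuinely non-routine step is securing $\psi\in\overline{\cal D}$, i.e. the stability of Assumption~(CC) under the affine shift $\ph\mapsto\ph-\eta_T(\ph)\un_F$; once this membership is established, the corollary is an immediate consequence of Theorem~\ref{gamma} together with the consistency of $p_T^N$.
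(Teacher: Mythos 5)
Your argument is, in its core, exactly the paper's own proof of Corollary~\ref{eta}: the identity $\eta_T^N(\ph)-\eta_T(\ph)=\tfrac{\gamma_T(\un_F)}{\gamma_T^N(\un_F)}\tfrac{1}{p_T}\gamma_T^N(\ph-\eta_T(\ph))$, the observation that $\psi=\ph-\eta_T(\ph)\un_F$ is $\gamma_T$-centered, Theorem~\ref{gamma} applied to $\psi$, the consistency of $\gamma_T^N(\un_F)$ from Proposition~\ref{pro:estimate}, and Slutsky's lemma. The one point where you go beyond the paper is in flagging, and attempting to justify, the membership $\psi\in\overline{\cal D}$; the paper applies Theorem~\ref{gamma} to $\psi$ without comment. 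You are right that this is the only non-routine step, but your justification of it is incomplete in two respects. First, the Kunita--Watanabe inequality bounds the density of $\bracket{M^1,M^2}_t$ by $\sqrt{\Gamma_{T-s}(\ph_1)\Gamma_{T-s}(\ph_2)}(X_s)$ and hence yields the integrability required in~(CC), but it does not by itself produce a measurable function $(s,x)\mapsto\Gamma_{T-s}(\ph_1,\ph_2)(x)$ such that the compensator of the cross term equals $\int_0^{t\wedge\tau_\partial}\Gamma_{T-s}(\ph_1,\ph_2)(X_s)\d s$, which is the Markovian form that Assumption~(CC) demands for $\ph_1+\ph_2$; one needs the Markov property (or bilinearity of the carr\'e du champ in the concrete settings of Section~\ref{sec:examples}) to get that. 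Second, $\un_F\in\overline{\cal D}$ is a genuine additional hypothesis rather than something ``already used'': the paper assumes that $\un_F$ satisfies~(CC) explicitly only in the subsection on the variance of $p_T^N$, and Corollary~\ref{eta} implicitly inherits that assumption. These caveats are shared with (indeed, suppressed by) the paper, so your proof should be regarded as matching the paper's, with a slightly more honest accounting of what must be checked.
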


In the next subsection, we propose to focus our attention on the estimator $p_T^N$ in order to discuss the asymptotic variance given by Theorem \ref{gamma}.

\subsection{Some comments on the asymptotic variance}

In this section, we assume that the function $\un_F$ satisfies Assumption (CC). 
Then, taking $\ph=\un_F$ in Theorem \ref{gamma} yields
$$\sqrt{N}\left(p_T^N-p_T\right)\xrightarrow[N\to\infty]{\cal D}{\cal N}(0,\sigma_T^2(\un_F)),$$
where
\begin{equation}\label{lakzch}
\sigma_T^2(\un_F) = - p_T^2\ln(p_T) - 2\int_0^T \V_{\eta_{t}}(Q^{T-t}(\un_F)) p_t p'_t  \d t.
\end{equation} 
In this expression, notice that
\begin{equation}\label{wxoicj}
\V_{\eta_{t}}(Q^{T-t}(\un_F))=\V(\P(X_T\neq\partial |X_t))=\E\left[\left(\P(X_T\neq\partial |X_t)-\frac{p_T}{p_t}\right)^2\right].
\end{equation}
Here $\P(X_T\neq\partial |X_t)$ is a random variable with values between 0 and 1, and expectation $p_T/p_t$. Hence the maximal possible value for the variance is obtained for a Bernoulli random variable with parameter $p_T/p_t$, so that
$$0\leq\V_{\eta_{t}}(Q^{T-t}(\un_F))\leq \frac{p_T}{p_t} \left(1-\frac{p_T}{p_t}\right).$$
Taking into account that $p'_t\leq 0$, we finally get the following bounds for the asymptotic variance of the probability estimate: 
\begin{equation}\label{lezch}
- p_T^2\ln(p_T)\leq\sigma_T^2(\un_F)\leq 2p_T(1-p_T)+p_T^2\ln(p_T).
\end{equation}

According to (\ref{wxoicj}), the lower bound is reached when, for each $t\in[0,T]$, the probability of being still alive at time $T$ is constant on the support of the law $\eta_t$. This situation includes, but is not limited to, the trivial case where the killing intensity $\lambda(x)$ is constant and equal to $\lambda$ on the whole space $F$. Then, for any initial condition, $\tau_\partial$ has an exponential distribution with parameter $\lambda$ and, obviously, $\V_{\eta_{t}}(Q^{T-t}(\un_F))=0$. In fact, in this elementary framework, one can be much more precise about the estimator 
$$p_T^N=\left(1-\tfrac{1}{N}\right)^{B_T}.$$
Indeed, a moment thought reveals that $(B_t)_{t\geq 0}$ is just a Poisson process with intensity $N\lambda$, so that $B_T$ has a Poisson distribution with parameter $N\lambda T$, and $p_T^N$ is a discrete random variable with law
 $$\P\left(p_T^N=\left(1-\tfrac{1}{N}\right)^k\right)=e^{-N\lambda T}\frac{(N\lambda T)^k}{k!}\hspace{1cm}\forall k\in\N.$$
 In particular, it is readily seen that this estimator is unbiased:
 $$\E[p_T^N]=e^{-\lambda T}=\P(X_T\neq\partial)=p_T,$$
 with variance 
 $$\V(p_T^N)=(p_T)^2\left(e^{\lambda T/N}-1\right)\ \Longrightarrow\ \lim_{N\to\infty}N \V(p_T^N)=-p_T^2\ln(p_T),$$
 which is exactly the lower bound in (\ref{lezch}).\medskip
 
By contrast, the upper bound in (\ref{lezch}) may be surprising at first sight. Indeed, notice that the crude Monte Carlo estimator $\hat{p}_T^N$ described in Section \ref{intro} satisfies
$$\sqrt{N}\left(\hat{p}_T^N-p_T\right)\xrightarrow[N\to\infty]{\cal D}{\cal N}(0,p_T(1-p_T)).$$ 
As $2p_T(1-p_T)+p_T^2\ln(p_T)\geq p_T(1-p_T)$ for any $p_T\in[0,1]$, this suggests that there are some situations where the Fleming-Viot estimator is less precise than the crude Monte Carlo estimator. More precisely, if $p_T$ is small, then $2p_T(1-p_T)+p_T^2\ln(p_T)\approx 2p_T(1-p_T)$, that is almost twice less precise in terms of asymptotic variance.\medskip

Although counterintuitive, this phenomenon can in fact be observed on a toy example. Take $F=\{0,1\}$ for the state space, $\eta_0=p\delta_0+(1-p)\delta_1$ with $0<p<1$ for the initial distribution, $\lambda_0=0<\lambda_1$ for the absorbing rates, and consider the process $X_t=X_0$ until time $\tau_\partial$. In other words, nothing happens before killing and the process can be killed if and only if $X_0=1$. Suppose that our goal is to estimate the probability $p_1$ that the process is still alive at time $T=1$. Clearly, for all $t\geq 0$, one has $p_t=p+(1-p)\exp(-\lambda_1 t)$ and the law of $X_t$ given that the process is still alive at time $t$ writes
$$\eta_t=\frac{1}{p_t}\left(p\delta_0+(1-p)\exp(-\lambda_1 t)\delta_1\right).$$
Since, for any $t\in[0,1]$, 
$$\P(X_1\neq\partial |X_t)=\un_{X_t=0}+\exp(-\lambda_1(1-t))\un_{X_t=1},$$
we deduce that
$$\V_{\eta_{t}}(Q^{T-t}(\un_F))=\frac{p+(1-p)e^{-\lambda_1(2-t)}}{p_t}-\left(\frac{p_1}{p_t}\right)^2=\frac{(p_1-p)^2}{p_t(p_t-p)}+\frac{p}{p_t}-\left(\frac{p_1}{p_t}\right)^2.$$ 
Therefore, taking $T=1$ in (\ref{lakzch}), the asymptotic variance is equal to
$$\sigma_1^2(\un_F)=2p(1-p_1)+2(p_1-p)^2\ln\frac{1-p}{p_1-p}+p_1^2\ln p_1.$$ 
Finally, remark that $p_1$ can be made arbitrarily close to $p$ by taking $\lambda_1$ sufficiently large, which in turn leads to a variance that is arbitrarily close to the upper bound in (\ref{lezch}).\medskip

Therefore, the take-home message is that we can easily exhibit pathological examples where the application of Fleming-Viot particle systems is in fact counterproductive compared to a crude Monte Carlo method.  Intuitively, the branching process in Fleming-Viot simulation improves the focus on rare events, but creates a strong dependency between trajectories.

\subsection{Comparison with the discrete time case}

In this section we will compare our results with what happens with the following discrete time algorithm. We start with a given finite set of times $t_0=0<t_1<\dots<t_n=T$. Let us assume to simplify that the $t_j$'s are evenly spaced in terms of survival probability, that is $p_{t_j}/p_{t_{j-1}} = p(n)$ for all $j$, with $p(n)\rightarrow 1$ when $n\rightarrow +\infty$.\medskip

We start with $N$ independent copies of the process $X$ and run them until time $t_1$. The ones having reached $\partial$ are then killed, and for each one killed, we randomly choose one that is not and duplicate it. Then we run the new and old (not killed) trajectories until time $t_2$, and iterate until we reach time $t_{n}=T$. If at some point all the trajectories are killed, i.e.~they all have reached $\partial$, then we consider that the run of the algorithm has failed and we call this phenomenon an extinction.\medskip

This discrete version of the algorithm falls in the framework of~\cite{delmoral04a}, so we can apply the results therein.  Among various convergence results, we will specifically focus on CLT type theorems and compare them to our setting. Let us also mention that the extinction probability is small when $N$ is large: specifically, there exist positive constants $a$ and $b$ such that the probability of extinction is less than $a\exp(-N/b)$ (Theorem 7.4.1 in \cite{delmoral04a}).\medskip

At each $t_k$, we denote by $\tilde\eta^N_k$ the empirical measure of the particles just before the resampling. We can estimate the probability $\P(\tau_\partial>T)$ by 
$$\prod_{k=1}^{n} \tilde\eta^N_k(\un_F)=\tilde\gamma^N_n(\un_F)\tilde\eta^N_n(\un_F)\hspace{1cm}\mbox{with}\hspace{1cm}\tilde\gamma^N_n(\un_F)=\prod_{k=1}^{n-1} \tilde\eta^N_k(\un_F).$$ 
We also define the unnormalized measures through their action on test functions $\ph$ by $\tilde\gamma^N_n(\varphi)=\tilde\gamma^N_n(\un_F)\tilde\eta^N_{n}(\varphi)$. As previously, we will assume that $\ph(\partial)=0$, which implies that for all $t\geq 0$, $Q^t(\ph)(\partial)=0$. The following CLT is then a straightforward generalization of Theorem 12.2.2 and the following pages of \cite{delmoral04a} : 
$$
\sqrt{N}\left(\un_{\tau^N>n}\tilde\gamma_n^N(\ph)-\gamma_T(\ph)\right)\xrightarrow[N\to\infty]{\cal D}{\cal N}(0,\tilde\sigma_n^2(\ph)),
$$
with $\tau^N$ the extinction iteration of the particle system, and $\tilde\sigma_n^2(\ph) = a_n - b_n$, where
$$a_n=\eta_0((Q^T\ph - \eta_0(Q^T\ph))^2)+\sum_{j=1}^n \gamma_{t_{j-1}}(\un_F)^2 \tilde\eta_{t_j}((Q^{T-t_j}\ph-\tilde\eta_{t_j}(Q^{T-t_j}\ph))^2),$$
and
$$b_n=\sum_{j=1}^n \gamma_{t_{j-1}}(\un_F)^2 \tilde\eta_{t_{j-1}}(\un_F(Q^{T-t_{j-1}}\ph-\tilde\eta_{t_j}(Q^{T-t_j}\ph))^2),$$
with $\tilde\eta_{t_j}=p(n) \eta_{t_j}+(1-p(n))\delta_\partial$. We do not have exactly $\eta_{t_j}$ because it is an updated measure, while the CLT of \cite{delmoral04a} applies to predicted measures (see \cite{delmoral04a} Sections 2.7.1 and 2.7.2 for a discussion on the difference).
After some very basic algebra, this asymptotic variance can be written as
\begin{align*}
\tilde\sigma_n^2(\ph) = \sum_{j=1}^n  \gamma_{t_{j-1}}(\un_F)^2 \Big{(} & p(n)(\eta_{t_j}((Q^{T-t_j}\ph)^2) - \eta_{t_{j-1}}((Q^{T-t_{j-1}}\ph)^2))\\
&-2p(n)^2\eta_{t_j}(Q^{T-t_j}\ph)(\eta_{t_j}(Q^{T-t_j}\ph)-\eta_{t_{j-1}}(Q^{T-t_{j-1}}\ph))\\
&+p(n)^2\eta_{t_j}(Q^{T-t_j}\ph)^2(1-\eta_{t_{j-1}}(Q^{t_j-t_{j-1}}\un_F)) \Big{)}\\
+\eta_0((Q^T\ph - \eta_0&(Q^T\ph))^2).
\end{align*}
Now, we should remember that $p_t=\gamma_t(\un_F)$, and that
$$1-\eta_{t_{j-1}}(Q^{t_j-t_{j-1}}\un_F) = 1-\frac{\gamma_{t_j}(\un_F)}{\gamma_{t_{j-1}}(\un_F)} = \frac{p_{t_{j-1}}-p_{t_j}}{p_{t_{j-1}}}.$$
If we make $n\rightarrow \infty$, which implies that  $\sup_{j}(t_j-t_{j-1})\rightarrow 0$, we have, at least formally, that $\tilde\sigma_n^2(\ph) \rightarrow \tilde\sigma_\infty^2(\ph)$ with
\begin{align*}
 \tilde\sigma_\infty^2(\ph) = & \int_0^T p_t^2 \frac{d}{dt} (\eta_t((Q^ {T-t}\ph)^2))  \d t -2\int_0^T p_t^2 \eta_t(Q^{T-t}\ph)\frac{d}{dt}( \eta_t(Q^{T-t}\ph))  \d t\\
 &-\int_0^T p_t\eta_t(Q^{T-t}\ph)^2 p_t'  \d t+\eta_0((Q^T\ph - \eta_0(Q^T\ph))^2).
\end{align*}
By integrating by parts the first two integrals, and noticing in the third one that $$p_tp_t' \eta_t(Q^{T-t}\ph)^2 = p_T^2\eta_T(\ph)^2 \frac{p_t'}{p_t},$$ we get that 
$$
\tilde\sigma_\infty^2(\ph) = p^2_T \V_{\eta_T}(\ph) - p_T^2\ln(p_T) \, \eta_T(\ph)^2 - 2\int_0^T \V_{\eta_{t}}(Q^{T-t}(\ph)) p_t p'_t  \d t,
$$
which is exactly the simplified expression of  $ \sigma_T^2(\ph)$,  the asymptotic variance of Theorem~\ref{gamma}.\medskip

In other words, the asymptotic variance of the continuous time algorithm can be interpreted as the limit of the asymptotic variance of the discrete time algorithm, when the time mesh becomes infinitely fine, i.e.~when the number of resamplings goes to infinity.

\subsection{Some comments on the assumptions}\label{pmazioch}

\subsubsection{Discussion}\label{maoezcj}

Assumptions (SK) and (CC) are related to the so-called ``generator'' of the Markov process $t \mapsto X_t$. \medskip

A bounded time-dependent function $(x,t) \mapsto \psi(t,x)$ is said to belong to the domain of the \emph{extended generator} of $t \mapsto X_t$ if there exists a \emph{bounded} function formally denoted $\p{\partial_t+L}(\psi)$ - or simply $L(\psi)$ if $\psi$ is independent of time - such that, for any initial $\calL(X_0)$, the process
\[
t \mapsto \psi(t,X_t) - \psi(0,X_0) - \int_0^t \p{\partial_s +L(\psi)}(s,X_s) \d s 
\]
is a martingale with respect to the considered right-continuous filtration for which $X$ satisfies the Markov property. Note that $t \mapsto \psi(t,X_t)$ has then a \cadlag modification. The following sufficient criterion can be checked easily:
\begin{Lem}\label{lem:gen}
Let $\psi$ and $\p{\partial_t+L}(\psi)$ be bounded time-dependent functions defined for $t \in [0,T]$. If for any initial $\calL(X_0)$, and for each $ 0 \leq t \leq T$, one has
\begin{equation}\label{eq:def_gen}
 \E \b{ \p{\partial_t+L}(\psi)(t,X_t)} = \frac{\d}{\d t} \E \b{\psi(t,X_t)},
\end{equation}
then $\psi$ belongs to the domain of the extended generator of $t \mapsto X_t$.
\end{Lem}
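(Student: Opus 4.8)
The plan is to show directly that the process
$$M_t := \psi(t,X_t)-\psi(0,X_0)-\int_0^t (\partial_s+L)(\psi)(s,X_s)\d s$$
is a martingale for the minimal right-continuous filtration $(\calF_t)$ for which $X$ is Markov. Integrability of each term is immediate from the boundedness of $\psi$ and of $(\partial_t+L)(\psi)$, and adaptedness is clear, so the only point to establish is the identity $\E[M_t\mid\calF_s]=M_s$ for $0\le s\le t\le T$.

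First I would record the \emph{integrated} form of the hypothesis. Fixing an initial law $\mu$ and writing $m(t):=\E_\mu[\psi(t,X_t)]$, relation \eqref{eq:def_gen} says $m'(t)=\E_\mu[(\partial_t+L)(\psi)(t,X_t)]$; integrating in $t$ and moving the expectation through the time integral by Fubini (licit since the integrand is bounded) yields
$$\E_\mu[\psi(t,X_t)]-\E_\mu[\psi(0,X_0)]=\E_\mu\!\left[\int_0^t (\partial_s+L)(\psi)(s,X_s)\d s\right],$$
that is, $\E_\mu[M_t]=0$ for every $t\in[0,T]$ and every initial distribution $\mu$.

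Next I would upgrade this unconditional statement to the conditional one using the Markov property and time-homogeneity. Conditioning on $\calF_s$ and applying the Markov property with the semi-group $(Q^h)$,
$$\E[M_t-M_s\mid\calF_s]=Q^{t-s}\big(\psi(t,\cdot)\big)(X_s)-\psi(s,X_s)-\int_0^{t-s}Q^{r}\big((\partial_t+L)(\psi)(s+r,\cdot)\big)(X_s)\,\d r,$$
which is a function of $X_s$ alone; by time-homogeneity it equals $\E_{X_s}[\bar M_{t-s}]$, where $\bar M$ is the process built as above but from the chain restarted at time $s$ and the time-shifted test function $\bar\psi(r,\cdot):=\psi(s+r,\cdot)$ (for which $(\partial_r+L)(\bar\psi)(r,\cdot)=(\partial_t+L)(\psi)(s+r,\cdot)$ is again bounded). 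The claim thus reduces to $\E_\nu[\bar M_{\rho}]=0$ for all $\rho\in[0,T-s]$ and all initial laws $\nu$, i.e.\ to the integrated identity of the previous step applied to $\bar\psi$ in place of $\psi$.

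The main obstacle is precisely this transfer of the hypothesis to the shifted test function $\bar\psi$. The subtlety is that \eqref{eq:def_gen} locks the time argument of $\psi$ to the elapsed time of the process, whereas after restarting at time $s$ the argument is offset by $s$; naively applying $Q^s$ to the integrated identity for $\psi$ recovers only $Q^s$ of the quantity one wants, i.e.\ the \emph{unconditional} mean-zero property, rather than the pointwise (almost sure) vanishing needed for the martingale identity. The resolution is to exploit that the hypothesis is assumed for \emph{every} initial distribution: by time-homogeneity the chain restarted at time $s$ is again the underlying Markov process, now started from the (regular conditional) law of $X_s$, so \eqref{eq:def_gen} is available verbatim for $\bar\psi$ along this restarted chain, giving $\E_\nu[\bar M_\rho]=0$ and hence $\E[M_t\mid\calF_s]=M_s$. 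Throughout I would be careful that only the combination $(\partial_t+L)(\psi)$ is assumed to exist as a bounded function, so no splitting into a separate time derivative and a separate spatial generator is permitted: every manipulation must keep $(\partial_t+L)(\psi)$ intact. Finally, the \cadlag modification of $t\mapsto\psi(t,X_t)$ guaranteed by the Markov property (as recalled just before Assumption~(CC)) is what makes $M$ a genuine \cadlag martingale.
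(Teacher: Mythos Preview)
The paper does not actually prove this lemma; it is stated immediately after the remark that ``the following sufficient criterion can be checked easily'' and no argument is given. So there is no paper proof to compare against, and what follows is an assessment of your attempt on its own.

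Your strategy is the natural one: establish $\E_\mu[M_t]=0$ for every initial law from the integrated form of~\eqref{eq:def_gen}, then upgrade to the conditional martingale identity via the Markov property. You also correctly isolate the only non-trivial step, namely transferring the hypothesis from $\psi$ to the time-shifted function $\bar\psi(r,\cdot)=\psi(s+r,\cdot)$.

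The resolution you propose, however, does not close that gap. Time-homogeneity tells you that the \emph{process} restarted at time $s$ is again the underlying Markov process with a new initial law; it says nothing about the \emph{test function}. Hypothesis~\eqref{eq:def_gen} locks the time argument of $\psi$ to the elapsed time of the chain: applied to the restarted chain with an arbitrary initial law $\nu$, it gives
\[
\frac{d}{dr}\,\E_\nu[\psi(r,X_r)]=\E_\nu[g(r,X_r)],
\]
which is \emph{not} the identity
\[
\frac{d}{dr}\,\E_\nu[\psi(s+r,X_r)]=\E_\nu[g(s+r,X_r)]
\]
that you need for $\bar\psi$. Exploiting ``every initial distribution'' in the original hypothesis reproduces the latter only for initial laws of the form $\nu=\mu Q^s$. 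Concretely, writing
\[
H(x)=Q^{t-s}\psi(t,\cdot)(x)-\psi(s,x)-\int_s^t Q^{u-s}g(u,\cdot)(x)\,du,
\]
your argument delivers $Q^sH\equiv 0$ (i.e.\ $\int H\,dQ^s(x,\cdot)=0$ for every $x$), whereas the martingale property requires $H(X_s)=0$ almost surely, i.e.\ $H=0$ $Q^s(x,\cdot)$-almost everywhere for every $x$. These two statements are not equivalent without further input---for instance injectivity of $Q^s$ on bounded functions---which the paper does not assume. So the step where you assert that ``\eqref{eq:def_gen} is available verbatim for $\bar\psi$ along this restarted chain'' is precisely where the argument breaks down, and some additional ingredient is needed to pass from $Q^sH\equiv 0$ to $H\equiv 0$.
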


By definition, Assumptions~(SK) and~(CC) can thus be checked thanks to Lemma~\ref{lem:gen}. Indeed, Assumption~(SK) is equivalent to the fact that $\un_F$ belongs to the extended generator of $t \mapsto  X_t$, in which case $\lambda = -L(\un_F)$. In the same way, $\ph$ satisfies Assumption~(CC) if and only if the time-dependent function $(t,x) \mapsto \b{ Q^{T-t}(\ph) }^2(x)$ belongs to the extended generator of $t \mapsto (t,X_t)$ for $t \in [0,T]$. 
Then one has
\begin{align*}
 \Gamma_{T-t}(\ph) = \p{\partial_t + L} \b{\p{Q^{T-t}(\ph)}^2}.
\end{align*}

For the specific case of Feller semi-groups recalled in Section~\ref{sec:fellerannex}, the domain of the infinitesimal generator is contained in the domain of the extended generator. This may be useful in practice in order to check Assumptions~(SK) and (CC).\medskip

Assumption~(CC) is also related to the so-called ``carré du champ'' of the Markov process $t \mapsto X_t$ through the \emph{formal} formula:
\[
 \Gamma_{t}(\ph)(x) = \Gamma\p{Q^{t}(\ph),Q^{t}(\ph)}(x),
\]
where the carré du champ operator $\Gamma$ is defined by $\Gamma(\ph,\ph)=L(\ph^2)-2\ph L(\ph)$, or alternatively by the forward in time variance formula $$\Gamma(\ph,\ph)(x)\eqdef  \lim_{h\downarrow 0}\frac{\V(\ph(X_h)|X_0=x)}{h}.$$ 
Further comments on the carré du champ operator are given in Section~\ref{sec:carre}.

\subsubsection{Some examples}\label{sec:examples}

This section illustrates assumptions (SK) and (CC) in two elementary but typical situations, namely Piecewise Deterministic Markov Processes and diffusions. The interested reader will find more comments and details on these assumptions in the appendix, especially concerning the case of Feller processes (see Section \ref{sec:fellerannex}).

\paragraph{Piecewise Deterministic Markov Processes} 

Our goal here is to show that Assumptions~(SK) and~(CC) are satisfied by Piecewise Deterministic Markov Processes with bounded jump intensity. Let us first explicit the framework we have in mind for this example. As we will see, it encompasses the ruin problem mentioned in Section \ref{intro}.

\begin{Def}\label{def:PDMP}
 A Piecewise Deterministic Markov Process (PDMP) with killing on state space $F \cup \set{ \partial } $ with $\partial \notin F$, with bounded jump intensity and bounded killing intensity is uniquely defined in law by:
 \begin{itemize}
  \item a measurable deterministic flow $(t,x) \in \R^+ \times F  \mapsto \psi_t(x) \in F$ satisfying the semi-group property $\psi_s \circ \psi_t = \psi_{s+t}$ and such that $t \mapsto \psi_t(x)$ is c\`adl\`ag;
  \item a non-negative kernel $q(x,dx')$ from $F$ to $F \cup \set{ \partial } $ satisfying the boundedness assumption $ \sup_x \int_{F \cup \set{ \partial } } q(x,dx') < + \infty $ and, by convention, $ q(x,\{x\}) = 0$.
 \end{itemize}
This PDMP can be sequentially constructed as follows. Define
$$\bar{q}(x)\eqdef\int_{F \cup \set{ \partial }}  q(x,dx'),$$ 
let $(E_n)_{n \geq 0}$ denote a sequence of i.i.d.~unit mean exponential random variables, and $X_0 \in F$  a given independent initial condition.
\begin{enumerate}[(i)]
  \item The random jump times $(T_n)_{n \geq 0}$ are defined by $T_0 = 0$ and, for all $n\geq 0$,
$$\dps \int_{T_n}^{T_{n+1}} \bar{q}(X_t) \d t = E_{n+1};$$  
  \item For $t \in [T_n,T_{n+1})$, set $X_t \eqdef   \psi_{t-T_n}(X_{T_n})$;
  \item For any $n\geq 0$, set $X_{T_{n+1}}\sim q(X_{T_{n+1}^-},dx') / \bar{q}(X_{T_{n+1}^-})$;
  \item The killing time $\tau_\partial$ is defined by $\tau_\partial\eqdef\inf \set{T_n,\ X_{T_n} = \partial}$. 
\end{enumerate}
\end{Def}

Such processes are c\`adl\`ag Markov processes (and even strong Markov, see e.g.~\cite{davis84}). It turns out that they satisfy the previous assumptions, as stated by the next result, whose proof is postponed to Section~\ref{amoch}.

\begin{Pro}\label{pdmp}
The Markov process specified by Definition~\ref{def:PDMP} satisfies:
\begin{itemize}
\item Assumption~(SK) with killing intensity $\lambda(x) = q(x, \set{\partial})$,
\item Assumption~(CC) for any bounded $\ph$, with carr\'e du champ defined by $\Gamma_t(\ph) = \Gamma\p{Q^t(\ph),Q^t(\ph)}$ and
$$\Gamma(\ph,\ph)(x)\eqdef\int_{x' \in F} \p{\ph(x') - \ph(x)}^2 q(x,dx').$$
\end{itemize}
\end{Pro}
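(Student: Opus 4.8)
The plan is to read both assumptions off the explicit jump structure of the PDMP and to verify them through Lemma~\ref{lem:gen}. Recall that, away from jumps, the process follows the deterministic flow $\psi_t$, while jumps occur at the bounded rate $\bar q(x)=\int_{F\cup\set{\partial}}q(x,dx')$; accordingly, on functions $f$ that are regular along the flow the extended generator acts by $Lf(x)=\mathcal{X}f(x)+\int_{F\cup\set{\partial}}\p{f(x')-f(x)}q(x,dx')$, where $\mathcal{X}$ is differentiation along $\psi$ (see~\cite{davis84}). For Assumption~(SK) I would take $f=\un_F$. It is constant along the flow, so $\mathcal{X}\un_F=0$, and since a jump either keeps the particle in $F$ or sends it to $\partial$, one gets $L\un_F(x)=-q(x,\set{\partial})=-\lambda(x)$, with $\lambda$ bounded by $\norm{\bar q}_\infty$. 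It then remains to check the averaged identity~\eqref{eq:def_gen}, namely $\frac{\d}{\d t}\E[\un_F(X_t)]=-\E[\lambda(X_t)\un_{X_t\in F}]$, which follows by differentiating the first-jump (renewal) decomposition of the survival probability $p_t=\P(X_t\in F)$ and using the boundedness of $\bar q$.

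For Assumption~(CC) I would work directly with the martingale $M_t=Q^{T-t}(\ph)(X_t)$, whose martingale property is recalled just before Assumption~(CC). The key point is that the PDMP carries no Brownian component, so $M$ is a purely discontinuous martingale and its quadratic variation reduces to the sum of its squared jumps, which occur exactly at the jump times of $X$: a jump from $X_{s^-}=x$ to $x'$ changes $M$ by $Q^{T-s}(\ph)(x')-Q^{T-s}(\ph)(x)$. Compensating $\sum_{s\le t}(\Delta M_s)^2$ against the jump kernel $q(X_{s^-},dx')\,\d s$ yields the predictable quadratic variation $\langle M\rangle_t=\int_0^{t\wedge\tau_\partial}\Gamma_{T-s}(\ph)(X_s)\,\d s$ with
$$\Gamma_{T-s}(\ph)(x)=\int_{F\cup\set{\partial}}\p{Q^{T-s}(\ph)(x')-Q^{T-s}(\ph)(x)}^2 q(x,dx').$$
This is the carr\'e du champ $\Gamma\p{Q^{T-s}(\ph),Q^{T-s}(\ph)}$; since $Q^{T-s}(\ph)(\partial)=0$, splitting off the atom at $\partial$ recovers the announced $F$-integral together with the cemetery contribution $\lambda(x)\,\p{Q^{T-s}(\ph)(x)}^2$. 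As $M_t^2-\langle M\rangle_t$ is then a martingale, this is precisely Assumption~(CC), and the integrability requirement $\int_0^T\norm{\Gamma_t(\ph)}_\infty\,\d t<+\infty$ is immediate from the uniform bound $\norm{\Gamma_t(\ph)}_\infty\le 4\norm{\ph}_\infty^2\norm{\bar q}_\infty$.

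The step I expect to be the main obstacle is making the second paragraph rigorous for a merely bounded $\ph$. Indeed, $Q^{T-t}(\ph)$ need not be differentiable along the flow, so $\p{Q^{T-t}(\ph)}^2$ need not belong to the classical PDMP generator domain and one cannot simply invoke the pointwise identity $\Gamma=L(f^2)-2fLf$. The argument must instead rest on the marked-point-process description of the PDMP: one has to justify that the compensator of the integer-valued measure counting the jumps is $q(X_{s^-},dx')\,\d s$, that the continuous flow part of $M$ contributes nothing to the quadratic variation, and that the stochastic integral of the bounded integrand against the compensated jump measure is a true (not merely local) martingale, the last point again following from the boundedness of $\ph$ and $\bar q$. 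Treating the jump to $\partial$ consistently, which is simultaneously the source of $\lambda$ in~(SK) and of the cemetery term in~$\Gamma$, is the delicate bookkeeping on which the whole verification hinges.
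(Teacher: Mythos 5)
Your verification of (CC) follows essentially the same route as the paper: the martingale $\M_t(\ph)=Q^{T-t}(\ph)(X_t)$ has no continuous martingale part, so $\b{\M(\ph),\M(\ph)}_t$ reduces to the sum of the squared jumps at the jump times of $X$, and its compensator is obtained by integrating $\p{Q^{T-s}(\ph)(x')-Q^{T-s}(\ph)(x)}^2$ against $q(X_s,\d x')\,\d s$; note that the paper's proof, like your bookkeeping, keeps the cemetery atom inside the kernel (integrating over $F\cup\set{\partial}$, which contains the $\lambda\,(Q^{T-s}\ph)^2$ term), and gets the same bound $\norm{\Gamma_t(\ph)}_\infty\le 4\norm{\ph}_\infty^2\norm{\bar q}_\infty$. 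The ``main obstacle'' you flag --- that the compensator of the marked jump measure is $q(X_{s^-},\d x')\,\d s$ and that the resulting local martingales are true martingales --- is exactly what the paper supplies through its Lemmas~\ref{lem:pdmp1} and~\ref{lem:pdmp2}: the mark is handled by conditioning at each jump time via Lemma~\ref{albcios}, the jump-time counting process is compensated by $\bar q(X_u)\,\d u$ through a time change of a Poisson process, and true-martingale status follows by domination by a Poisson process of rate $\norm{\bar q}_\infty$. The only genuine divergence is in (SK): you compute $L\un_F=-q(\cdot,\set{\partial})$ and propose to check the averaged identity~\eqref{eq:def_gen} of Lemma~\ref{lem:gen} by differentiating the first-jump renewal decomposition of $p_t$, whereas the paper simply applies Lemma~\ref{lem:pdmp2} with $\psi(s,x,x')=\un_\partial(x')-\un_\partial(x)$, which produces the required compensated martingale directly. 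Since you already need the compensator of the jump measure for (CC), the paper's route gives (SK) for free; the renewal equation is an implicit convolution-type identity and is somewhat more work to differentiate rigorously than your sketch suggests, so I would recommend folding (SK) into the same compensator argument.
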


The ruin problem mentioned in Section \ref{intro} can be seen as a PDMP satisfying these assumptions. Namely, suppose for simplicity that the claim size $Y_1$ with cdf $F_Y$ has a density $f_Y$ on $F=\R_+$ with respect to Lebesgue's measure, then for all $x$ and $x'$ in $F=\R_+$, we have
$$\left\{\begin{array}{rl}
q(x,dx')=&\theta  f_Y(x-x') \un_{0\leq x'\leq x}dx'\\
q(x,\{\partial\})=&\theta (1-F_Y(x))\\
\bar{q}(x)=&\theta\\
\psi_t(x)=&x+ct\\
\lambda(x)=&\theta (1-F_Y(x))
\end{array}\right.$$
Clearly, for any $x\geq 0$, the mapping $t \mapsto \psi_t(x)$ is continuous and satisfies $\psi_s \circ \psi_t = \psi_{s+t}$, the non-negative kernel $q$ is such that $q(x,\{x\})=0$ and 
$$\sup_x \int_{F\cup \set{ \partial }} q(x, \d x ')=\theta < + \infty,$$
and the killing intensity is bounded as well since  $\|\lambda\|_\infty\leq\theta<+\infty$. Hence, in this case, the carr\'e du champ is defined by
$$\Gamma(\ph,\ph)(x)\eqdef\theta\int_F \p{\ph(x') - \ph(x)}^2 f_Y(x-x') dx'.$$

\paragraph{Diffusions}

Let $(Z_t)_{t \geq 0}$ denote a diffusion on $\R^d$ defined for all time $ t\geq 0$ and with - formal - generator
\begin{align*} 
L_0=\frac12\sum_{i,j}a_{ij}(x)\frac{\partial^2}{\partial x_i\partial x_j}
+\sum_{i}b_{i}(x)\frac{\partial}{\partial x_i}.
\end{align*}
We assume that for each initial distribution $\calL(Z_0)$, the latter is a weak solution to the Stochastic Differential Equation (SDE)
\begin{align}\label{eq:SDE}
\d Z_t=b(Z_t)\d t+\sigma(Z_t) \d B_t
\end{align}
where $\sigma \sigma^T =  a$. In~\eqref{eq:SDE}, $b$ and $\sigma$ are at least locally bounded, $(Z_t)_{t \geq 0}$ is adapted and $(B_t)_{t \geq 0}$ is a standard Brownian motion with respect to some unprescribed filtration, see~e.g.~\cite{ek86,RogWil_vol2}. \medskip 

The process $X_t$ is defined by $X_t=Z_t$ for $t < \tau_\partial$, and $X_t = \partial$ for $t \geq \tau_\partial$. The killing happens at a locally bounded rate $\lambda$, so that $\int_0^{\tau_\partial} \lambda(Z_t) \d t = - \log U$ where $U \in [0,1]$ is uniform and independent of $(Z_t)_{t \geq 0}$. One then has the Feynman-Kac formula
\begin{align} \label{qtphi}
Q^t(\ph)(x) = \E_x \b{\ph(X_t)}=\E_x \b{\ph(Z_t) e^{- \int_0^t \lambda(Z_s) \d s } },
\end{align}
where by convention $\ph( \partial)=0$. The generator of $(Q^t)_{t \geq 0}$ is then formally given by $L(\ph) = L_0(\ph) -\lambda \ph $. A quick formal calculation also enables to define the carr\'e du champ as
\[
 \Gamma(\ph,\ph) =L(\ph^2) - 2 \ph L(\ph) =  \sum_{i,j} a_{ij}(x)\partial_{x_i} \ph \partial_{x_j} \ph + \lambda \ph^2.
\]
In the special case of a Brownian motion with $\lambda = 0$, this implies that $\Gamma(\ph,\ph)=|\nabla \ph|^2$, hence the name ``carr\'e du champ'' (square field).\medskip

In this setting, the next result is straightforward.
\begin{Lem}
If $\lambda$ is bounded, then Assumption~(SK) is satisfied. 
\end{Lem}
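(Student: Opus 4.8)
The plan is to verify Assumption~(SK) by exhibiting $\un_F$ as a member of the domain of the extended generator of $t \mapsto X_t$, with $L(\un_F) = -\lambda$. As observed in Section~\ref{maoezcj}, membership of $\un_F$ in the extended generator (with intensity $\lambda = -L(\un_F)$) is precisely equivalent to~(SK); moreover, since $\lambda$ is bounded by hypothesis, the candidate $L(\un_F)=-\lambda$ is a bounded function, as required. To establish this membership I would invoke the sufficient criterion of Lemma~\ref{lem:gen} with the time-independent test function $\psi = \un_F$. Recalling from the formal generator $L(\ph) = L_0(\ph) - \lambda\ph$ that $L(\un_F) = -\lambda$ (since $\un_F \equiv 1$ on $F$, so $L_0(\un_F)=0$), it then suffices to check that for every initial law $\calL(X_0)$ and every $t \in [0,T]$,
$$\E\b{L(\un_F)(X_t)} = -\E\b{\lambda(X_t)} = \frac{\d}{\d t}\E\b{\un_F(X_t)} = p_t'.$$

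Both sides are computed from the Feynman-Kac representation~\eqref{qtphi}. Writing $A_t \eqdef \int_0^t \lambda(Z_s)\d s$, the construction of the killing time gives $\P(\tau_\partial > t \mid (Z_s)_{s\geq 0}) = e^{-A_t}$, because $\tau_\partial$ is the first time the additive functional $A$ crosses the independent unit-mean exponential threshold $-\log U$. Taking $\ph = \un_F$ in~\eqref{qtphi} yields $p_t = \E\b{\un_F(X_t)} = \E\b{e^{-A_t}}$. Since $\norm{\lambda}_\infty < \infty$, the map $t \mapsto e^{-A_t}$ is bounded with pathwise derivative $-\lambda(Z_t)e^{-A_t}$ dominated by $\norm{\lambda}_\infty$, so dominated convergence licenses differentiation under the expectation:
$$p_t' = \frac{\d}{\d t}\E\b{e^{-A_t}} = -\E\b{\lambda(Z_t)e^{-A_t}}.$$
Conditioning on the path of $Z$ and using the survival formula together with the convention $\lambda(\partial)=0$ gives $\E\b{\lambda(Z_t)e^{-A_t}} = \E\b{\lambda(Z_t)\un_{\tau_\partial > t}} = \E\b{\lambda(X_t)}$, which is exactly the identity demanded above. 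Lemma~\ref{lem:gen} then applies, $\un_F$ lies in the extended generator, and~(SK) follows.

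The step requiring the most care is the conditional survival identity $\P(\tau_\partial > t \mid Z) = e^{-A_t}$ together with the interchange of differentiation and expectation; both rest on the independence of the exponential clock from $(Z_t)_{t\geq 0}$ and on the boundedness of $\lambda$, which is the single hypothesis of the statement. One should also confirm that the verification is uniform over initial distributions, as Lemma~\ref{lem:gen} requires, but this is automatic since the argument never uses a particular $\calL(Z_0)$. I would finally remark that the underlying reason for the result is the classical fact that a killing time with stochastic intensity $\lambda(Z_t)$ admits $\int_0^{t\wedge\tau_\partial}\lambda(X_s)\d s$ as compensator, so that $t \mapsto \un_{X_t=\partial} - \int_0^{t\wedge\tau_\partial}\lambda(X_s)\d s$ is a martingale; routing the proof through Lemma~\ref{lem:gen} reduces this martingale statement to the one-dimensional identity between expectations above, thereby avoiding any explicit description of the right-continuous filtration generated by $X$.
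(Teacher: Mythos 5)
Your proposal is correct and takes essentially the same route as the paper: both reduce (SK) via Lemma~\ref{lem:gen} to the single identity $\frac{\d}{\d t}\E\b{e^{-\int_0^t \lambda(Z_s)\d s}} = -\E\b{\lambda(Z_t)e^{-\int_0^t \lambda(Z_s)\d s}}$, obtained from the Feynman--Kac representation and justified by dominated convergence using the boundedness of $\lambda$. The only difference is that you spell out the conditional survival identity $\P(\tau_\partial>t\mid Z)=e^{-\int_0^t\lambda(Z_s)\d s}$ and the domination bound, which the paper leaves implicit.
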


 Indeed, from the Feynman-Kac formula and Lemma~\ref{lem:gen}, it is sufficient to verify that
\[
 \frac{\d}{\d t} \E \b{ e^{- \int_0^t \lambda(Z_s) \d s } } = \E \b{ -\lambda(Z_t)  e^{- \int_0^t \lambda(Z_s) \d s } },
\]
which is just a dominated convergence result when $\lambda$ is bounded.\medskip

Assumption~(CC) can typically be checked using It\^o calculus, provided that $(x,t) \mapsto Q^t(\ph)(x)$ is regular enough. The key formula here is the following representation of the \cadlag version of the martingale
\begin{equation}\label{eq:key_diff}
\M_t(\ph) \eqdef Q^{T-t}(\ph)(X_t) = \int_0^t \b{\nabla_x Q^{T-s}(\ph) \sigma }(X_s) \d B_s + J_t
\end{equation}
where $J$ is a martingale defined by
\[
J_t \eqdef \M_{t^-}(\ph) \un_{X_t \in \partial} - \int_0^{t} \M_s(\ph) \lambda(X_s) \d s .
\]
For instance, one can obtain easily:
\begin{Lem}
Assume that $\sigma$ and $\lambda$ are bounded, and that there exists a function $(t,x) \mapsto \nabla_x Q^t(\ph)(x)$, bounded on $[0,T] \times \R^d$, such that~\eqref{eq:key_diff} holds true. Then $\ph$ satisfies Assumption~(CC) with
\[
 \Gamma_t(\ph) = \Gamma( Q^t(\ph), Q^t(\ph)  ).
\]
\end{Lem}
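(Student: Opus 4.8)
The plan is to verify Assumption~(CC) directly from the semimartingale decomposition~\eqref{eq:key_diff} of the bounded martingale $\M_t(\ph) = Q^{T-t}(\ph)(X_t)$, by identifying its predictable quadratic variation. Recall that for a square-integrable martingale $M$ the process $M^2 - \bracket{M}$ is a martingale, where $\bracket{M}$ is the predictable bracket. Since $\M_t(\ph)$ is bounded by $\norm{\ph}_\infty$, it suffices to show that
$$\bracket{\M(\ph)}_t = \int_0^{t \wedge \tau_\partial} \Gamma\p{Q^{T-s}(\ph),Q^{T-s}(\ph)}(X_s) \d s,$$
for then, setting $\Gamma_t(\ph) = \Gamma(Q^t(\ph),Q^t(\ph))$, the process $\M_t(\ph)^2 - \int_0^{t\wedge\tau_\partial}\Gamma_{T-s}(\ph)(X_s)\d s$ is a martingale, which is exactly Assumption~(CC).

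First I would split $\M(\ph)$ according to~\eqref{eq:key_diff} into its continuous part $M^c_t \eqdef \int_0^t [\nabla_x Q^{T-s}(\ph)\sigma](X_s)\d B_s$ and the purely discontinuous martingale $J$, whose single jump occurs at the killing time $\tau_\partial$ with size $-\M_{\tau_\partial^-}(\ph)$. Since $M^c$ is continuous, $[M^c,J]=0$ and the two parts are orthogonal, so $\bracket{\M(\ph)} = \bracket{M^c} + \bracket{J}$. For the continuous part, the It\^o isometry together with $\sigma\sigma^T = a$ gives $\bracket{M^c}_t = \int_0^{t\wedge\tau_\partial}\sum_{i,j}a_{ij}(X_s)\partial_{x_i}Q^{T-s}(\ph)(X_s)\partial_{x_j}Q^{T-s}(\ph)(X_s)\d s$, the integral stopping at $\tau_\partial$ since $\M$ is frozen after killing.

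The main step is the jump part, where I would invoke Assumption~(SK): the killing counting process $N_t = \un_{X_t=\partial}$ admits the predictable compensator $\int_0^{t\wedge\tau_\partial}\lambda(X_s)\d s$. Because the diffusion $Z$ has continuous paths, $\M$ is continuous up to $\tau_\partial$, so the only jump of $J$ is $-\M_{\tau_\partial^-}(\ph)$ and $[J]_t = \M_{\tau_\partial^-}(\ph)^2\un_{\tau_\partial\le t}$. Compensating this single-jump process, i.e.\ integrating the predictable integrand $\M_{s^-}(\ph)^2 = Q^{T-s}(\ph)(X_{s^-})^2$ against the compensator of $N$, yields $\bracket{J}_t = \int_0^{t\wedge\tau_\partial}\lambda(X_s)Q^{T-s}(\ph)(X_s)^2\d s$. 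Adding the two brackets and recalling the diffusion formula $\Gamma(\ph,\ph) = \sum_{i,j}a_{ij}\partial_{x_i}\ph\partial_{x_j}\ph + \lambda\ph^2$ reproduces exactly the claimed integrand $\Gamma_{T-s}(\ph)(X_s)$. This rigorous identification of $\bracket{J}$, together with its additive (orthogonal) interaction with the Brownian part, is the delicate point of the argument.

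It remains to check the integrability requirement and to pass from local to true martingale. Since $\ph$ is bounded, $\norm{Q^t(\ph)}_\infty \le \norm{\ph}_\infty$; combined with $\sigma$ bounded (hence $a$ bounded), $\lambda$ bounded, and $\nabla_x Q^t(\ph)$ bounded on $[0,T]\times\R^d$ by hypothesis, the function $\Gamma_t(\ph)$ is uniformly bounded on $[0,T]$, so $\int_0^T\norm{\Gamma_t(\ph)}_\infty\d t < \infty$ as demanded by Assumption~(CC); in particular the integrand of $M^c$ is square-integrable, making the stochastic integral well-defined. The uniform boundedness of both $\M(\ph)$ and $\bracket{\M(\ph)}$ finally provides the uniform integrability needed to upgrade the local martingale $\M(\ph)^2 - \bracket{\M(\ph)}$ to a genuine martingale, for any initial law $\calL(X_0)$, which concludes the verification.
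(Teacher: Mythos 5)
Your proposal is correct and follows essentially the same route as the paper's (much terser) proof: decompose $\M(\ph)$ into the Brownian integral and the purely discontinuous part $J$, use orthogonality, compute $\bracket{L,L}$ by It\^o isometry and $\bracket{J,J}$ by compensating the single killing jump via Assumption~(SK). Your write-up simply makes explicit the compensation of $[J]$ and the boundedness/integrability checks that the paper leaves implicit.
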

\begin{proof} Denoting 
$$L_t \eqdef  \int_0^t \b{\nabla_x Q^{T-s}(\ph) \sigma }(X_s) \d B_s,$$
remark that (i) by construction $L$ and $J$ are orthogonal, (ii) by It\^o calculus $$ \d \bracket{L,L}_t =  \sum_{i,j}\b{a_{i,j}\partial_{x_i} Q^{T-t}(\ph)\partial_{x_j} Q^{T-t}(\ph) }( X_t ) \d t, $$ 
(iii) by construction of the jump, $ \d \bracket{J,J}_t = \b{\lambda \p{Q^{T-t}(\ph)}^2} (X_t) \d t$.
\end{proof}

According to Remark~$1$, Theorem~$3.3$, Chapter~IV in~\cite{ry99}, if the function $(t,x) \mapsto Q^{T-t}(\ph)(x)$ belongs to $C^{1,2}\p{[0,T]\times \R^d}$, one can apply It\^o's lemma to the process $t \mapsto
Q^{T-t}(\ph)(X_t)$ to obtain $\p{-\partial_t+L }\p{Q^{T-t}(\ph)}=0$ and then key formula~\eqref{eq:key_diff}. \medskip


In order to check the regularity of $(t,x) \mapsto Q^{t}(\ph)(x)$, and especially the boundedness of $(t,x) \mapsto \nabla_x Q^{t}(\ph)(x)$, one has to distinguish between: on the one hand, results obtained from Partial Differential Equations techniques, typically in the elliptic case; and, on the other hand, results obtained from stochastic flows generated by strong solutions of the considered SDE.\medskip


For instance, if (i) $a$ satisfies a uniform ellipticity condition, and (ii) $a$, $b$, 
$\lambda$ and $\ph$ are bounded and satisfy a global $\alpha$-H\"older condition, 
classical estimates of the transition probability function
enable to check that $(t,x) \mapsto Q^{t}(\ph)(x)$ is $\alpha$-H\"older continuous and bounded up to order $1$ in time and $2$ in space (see for example \cite{dynkin2} Theorem $0.4$). \medskip

Concerning the method based on stochastic flows, which does not require any ellipticity assumption, the idea is to consider strong solutions of the SDE 
$$\left\{
\begin{array}{lrl}
Z^x_0&=&x \\
dZ^x_t&=&b(Z^x_t)dt+\sigma(Z^x_t)dB_t,
\end{array}
\right.$$
where $b$ and $\sigma$ are at least globally Lipschitz. Under additional regularity of the coefficients, the derivative with respect to the initial condition is solution of another SDE, and some a priori estimates may lead to a probabilistic representation formula for $\nabla_x Q^{T-t}(\ph)$ and give an upper bound on the latter. We refer to \cite{Gikhman} Section 8.4, \cite{krylov} Section 2.8, or to \cite{skorokhod1981}.

\section{Proof}\label{mazlco}

As in \cite{v14}, the key objects of the proof are the bounded c\`adl\`ag martingales
\begin{equation}\label{eq:M_t}
t \mapsto M_t\eqdef   M_t(\ph)= \gamma^N_t \p{Q^{T-t}(\ph) },  
\end{equation}
verifying
$$M_T -M_0= \gamma^N_T( \ph) - \gamma^N_0(Q^T(\ph))$$ 
which, formally, is converging to $0$ when $N \to \infty$ by the law of large numbers. This point was already established in \cite{v14} and will be recalled in Proposition \ref{pro:estimate}. However, our ultimate goal here is to prove a CLT on $\gamma^N_T( \ph)-\gamma_T( \ph)$. This requires several intermediate steps that we propose to detail in the upcoming subsections.

\subsection{Background on stochastic calculus}\label{remind}

Let us recall some technical tools that will prove useful in the sequel. We refer the reader to \cite{dm82,js03,ry99} for details and complements on stochastic calculus.

\paragraph{Filtrations and stopping times}

Recall that $\tau \in [0,+\infty]$ is a stopping time if $\set{\tau \leq t} \in \calF_t$ for any $t \geq 0$. Denoting $\calF_{\infty} \eqdef \mathop{\bigvee}_{t \geq 0} \calF_t$, $\calF_\tau$ is defined by the following property: $A \in \calF_\tau \subset \calF_{\infty}$ if and only if $A \cap \set{\tau \leq t} \in \calF_t$ for any $t \geq 0$.\medskip 

Next, $\calF_{\tau-} \subset \calF_{\tau}$ is defined as the smallest $\sigma$-field containing $\calF_0$ and all $B \cap \set{t < \tau}$ where $B \in \calF_t$ and $t\geq 0$. Note that $\tau$ is then $\calF_{\tau-}$-measurable. These properties still hold true for stopping times: if $\sigma$ is another stopping time, then $A \in \calF_\tau$ implies $A \cap \set{\tau \leq \sigma} \in \calF_\sigma$, and $ B \in \calF_\sigma$ implies $  B \cap \set{\sigma < \tau} \in \calF_{\tau^-}$.\medskip  

Finally, recall that if $\tau_n,\, n \geq 1$, is a sequence of stopping times, then: (i) $\sigma : = \inf_n \tau_n$ is a stopping time, and if the filtration is right-continuous, $\calF_{\sigma} = \mathop{\bigcap}_{n \geq 0} \calF_{\tau_n}$; (ii) $\tau : = \sup_n \tau_n$ is a stopping time, and if for all $n$, $\tau_n < \tau$ on the event $\set{\tau > 0}$, then $\calF_{\tau^-} = \mathop{\bigvee}_{n \geq 0} \calF_{\tau_n}$.

\paragraph{Predictability}
A stopping time $\sigma \in [0,+\infty] $ is said to be predictable, if there exists an increasing sequence of stopping times $\sigma_n \uparrow \sigma$ such that for all $n \geq 0$,
$\sigma_n < \sigma$ on the event $\set{\sigma > 0}$. Such a sequence is called an announcing sequence of stopping times. A process $(X_t)_{t \geq 0}$ with right and left limits is predictable if and only if there exists a sequence of predictable stopping times $ \sigma_n, \, n \geq 0$, such that: (i) the sequence exhausts the times where $X$ is not left-continuous, i.e. $X_{t-} \neq X_t \Rightarrow t \in \set{\sigma_n, \, n \geq 1}$; and (ii) for each $n$, $X_{\sigma_n}$ is $\calF_{\sigma_n^-}$-measurable. In particular, all left-continuous processes, and all right-continuous counting processes of predictable stopping times are predictable. Note that if $X$ is predictable, then $X_\tau$ is $\calF_{\tau^-}$-measurable. Nonetheless, this property is not sufficient to ensure predictability.

\paragraph{Martingales and stochastic integrals}
Let us recall that a property of a random process holds ``locally'' if it holds for the process stopped at $\tau_n$ for each $n \geq 1$, where $ \tau_n$ is some sequence of stopping times increasing to infinity. Stochastic integrals of the form
\[
 \int P_{t} \d X_t
\]
make sense when $t \mapsto P_t$ is predictable and locally bounded, and $t \mapsto X_t$ is either a c\`adl\`ag local martingale, or a c\`adl\`ag monotone process. \medskip

In the present paper, the predictable integrand will most often be of the form $P_t=Y_{t^-} = \lim_{h \to 0^+}Y_{t-h}$, the left continuous version of any bounded c\`adl\`ag process $Y$. Moreover, $X$ will most often be a bounded martingale or a bounded increasing process.\medskip 


\paragraph{Quadratic variations}
In what follows, we will call a \emph{time mesh} a sequence $t_0=0 \leq t_1 \leq \ldots t_n \leq \ldots$ of  times such that $\lim_n t_n =\infty$. The mesh size is defined by $\delta \eqdef \sup_n (t_{n+1}- t_n)$.\medskip

If $t \mapsto M_t$ is a c\`adl\`ag locally square integrable martingale, the quadratic variation $\b{M,M}_t$ is the unique c\`adl\`ag process such that
\[
 \d \b{M,M}_t = \d M_t^2 - 2 M_{t^-} \d M_t
 \]
 in the sense of stochastic integration. It can be shown that
 \[
  \b{M,M}_t = \lim_{\delta \to 0}^\P \sum_{t_n \leq t} (M_{t_{n+1}} - M_{t_n})^2
 \]
where the limit holds in probability for any sequence of time meshes whose mesh size $\delta$ goes to zero.\medskip
 
On the other hand, if $t \mapsto M_t$ is a c\`adl\`ag locally square integrable martingale, the predictable quadratic variation $\bracket{M,M}_t$ is the unique predictable c\`adl\`ag process such that
\[
t \mapsto M_t^2- \bracket{M,M}_t
\]
is again a local martingale. Equivalently, $\bracket{M,M}_t$ is the compensator of the increasing process 
$t \mapsto \b{M,M}_t$.  Its existence and uniqueness are ensured by Doob-Meyer decomposition theorem. $\bracket{M,M}_t$ can be interpreted as the maximally integrated quadratic variation of the martingale. Indeed, if the martingale is square integrable, then one can prove that
\[
  \bracket{M,M}_t = \mathop{\lim}_{\delta \to 0}^{\text{weak}-L_1} \sum_{t_n \leq t} \E\b{ (M_{t_{n+1}} - M_{t_n})^2 | \calF_{t_n^-}}
 \]
 where the limit holds weakly in $L_1(\Omega,\P)$ and for any sequence of time meshes whose mesh size $\delta$ vanishes (see e.g.~Theorem~$18$, Section~1, Chapter~VII in \cite{dm82}).\medskip
 
In the present paper, predictable quadratic variations will always be continuous processes, continuity being a sufficient condition for predictability. Note finally that if $M$ is continuous, so is $\b{M,M}$, which in turn implies predictability and thus $\b{M,M} = \bracket{M,M}$.



\subsection{Well-posedness}

By convention, if a trajectory has a discontinuity, we call it a ``jump''. This should not be confused with the more restrictive term ``branching'', which corresponds only to the case where a particle is absorbed and instantaneously branches on another one.\medskip

Let us come back to the Fleming-Viot algorithm of Definition \ref{peicj}. 
In a completely general context, as in \cite{v14}, if two or more particles are simultaneously killed, 
then we would say that the process undergoes a failure and stop the algorithm. 
As ensured by the upcoming result, almost surely this can not happen in our framework 
thanks to Assumption (SK).

\begin{Lem}\label{lem:jumps}
Under Assumption (SK), 
%
%
almost surely, branchings cannot occur at the same time: $\tau_{n,k}\neq\tau_{m,j}$ almost surely for any $j,k \geq 1$ if $n\neq m$. Hence, the particle system is well-defined. 
\end{Lem}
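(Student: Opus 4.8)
The plan is to exploit the fact that, under Assumption~(SK), each particle is killed at a \emph{totally inaccessible} instant, and that the killings of distinct particles are triggered by conditionally independent sources of randomness; two such events then coincide with probability zero.

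First I would record two preliminary facts. Since every particle is instantaneously revived after an absorption, one has $X^n_t \in F$ for all $t$, so by Assumption~(SK) the branching counting process $B^n$ of particle $n$ admits the \emph{continuous} compensator $A^n_t = \int_0^t \lambda(X^n_s)\,\d s$; equivalently, $M^n \eqdef B^n - A^n$ is a martingale with $\Delta M^n = \Delta B^n$. Because $\lambda$ is bounded, the total branching intensity is at most $N\norm{\lambda}_\infty$, so $B_T$ is dominated by a Poisson variable and is finite almost surely: only finitely many branchings occur on $[0,T]$, with instants $\tau_1 < \tau_2 < \cdots$ for the whole system. This finiteness is what lets me argue by induction over the successive branching instants.

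I would then reduce the claim to a single branching event. On each interval between two consecutive system branchings, the (possibly revived) particles evolve as independent copies of $X$; hence, conditionally on $\calF_{\tau_j}$ and by the strong Markov property, the next absorption times $\sigma^1,\dots,\sigma^N$ of the $N$ particles are conditionally independent. It therefore suffices to show that $\P(\sigma^n = \sigma^m < \infty)=0$ for $n\neq m$. The core step is that each $\sigma^n$ has an \emph{atomless} conditional law: by (SK), $\P(\sigma^n > t \mid X^n_0) = \E[\exp(-\int_0^t \lambda(X^n_s)\,\d s)\mid X^n_0]$ is an absolutely continuous function of $t$ (as $\lambda$ is bounded), so the conditional distribution of $\sigma^n$ carries no atom. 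Conditioning on $\sigma^m$ and using conditional independence gives $\P(\sigma^n = \sigma^m) = \E[\,\P(\sigma^n = \sigma^m \mid \sigma^m)\,] = 0$. Feeding this estimate through the (finite) induction over the branching instants, and taking the finite union over pairs $n\neq m$, yields $\Delta B^n_t\,\Delta B^m_t = 0$ for all $t$, i.e.\ $\tau_{n,k}\neq\tau_{m,j}$ a.s.\ whenever $n\neq m$; in particular no two particles are ever absorbed at the same instant, so the rebirth rule of Definition~\ref{peicj} is unambiguous and the system is well defined.

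The main obstacle I anticipate is the rigorous handling of the dependency structure across branchings: one must verify, using right-continuity of the filtration and the strong Markov property, that immediately after a rebirth the particles are genuinely conditionally independent with the same atomless killing laws, so that the single-branching estimate can legitimately be iterated. A purely martingale-theoretic alternative — proving $\langle M^n, M^m\rangle \equiv 0$, whence the nonnegative local martingale $[B^n,B^m]$ (the common-jump counter) is identically zero — is tempting and elegant, but establishing this orthogonality requires exactly the same conditional-independence input, so I would retain the elementary atomless-law argument as the backbone.
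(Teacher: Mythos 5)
Your core argument is sound and follows the same reduction as the paper: fix a pair of particles and a pair of branching indices, condition on $\calF_S$ where $S$ is the later of the two preceding branching times, invoke the conditional independence (given $\calF_S$) of the two next absorption times, and conclude from the fact that the freshly reborn particle's absorption time has an atomless conditional law. The paper implements this last step differently: instead of manipulating conditional distribution functions, it observes that under (SK) the process $t\mapsto \un_{\tilde X_t=\partial}-\int \lambda(\tilde X_s)\,\d s$ is a martingale with a \emph{continuous} compensator whose only jump occurs at the absorption time, and then runs an optional-sampling argument (Lemmas~\ref{lem:quasi-left_1} and~\ref{lem:zero_jump2}) to get $\P(\tau_{n,k+1}=\sigma\mid\calF_{\sigma^-})\un_{S<\sigma<\infty}=0$ for any stopping time $\sigma$ conditionally independent of the particle's trajectory. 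Two points in your sketch need repair, though neither is fatal. First, the formula $\P(\sigma^n>t\mid X^n_0)=\E[\exp(-\int_0^t\lambda(X^n_s)\,\d s)\mid X^n_0]$ is \emph{not} a consequence of (SK) as stated: it presupposes a Feynman--Kac construction of the killing. What (SK) actually gives is $\P(\tau_\partial\le t)=\E[\int_0^{t\wedge\tau_\partial}\lambda(X_s)\,\d s]$, which is Lipschitz in $t$ with constant $\norm{\lambda}_\infty$ and therefore still yields the atomlessness you need, so this is an imprecision rather than a gap. Second, the paper deliberately does \emph{not} assume the strong Markov property of $X$; the conditional independence after a rebirth comes from the explicit construction of the particle system with fresh independent randomness at each branching (Section~\ref{sec:def_ips}), not from stopping the underlying process at a random time. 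You correctly flag this as the main obstacle; the resolution is to lean on the construction itself (and on the Markov property with respect to the minimal right-continuous filtration, as in Lemma~\ref{lem:mart0}) rather than on a strong Markov property the hypotheses do not provide. With those two adjustments, your atomless-law argument and the paper's quasi-left-continuity argument are two dressings of the same underlying fact.
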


\begin{proof}
Let us fix $k,j \geqslant 0$, and $m\neq n \in \set{1, \ldots , N}$. Recall that $\tau_{n,0}=\tau_{m,0}=0$ by convention. It is sufficient to prove that 
$$\P( \tau_{n,k+1} = \tau_{m,j+1} \, \text{and}  \,\tau_{m,j} \leq \tau_{n,k} < \tau_{m,j+1})=0,$$ 
since taking the countable union over $j,k \geq 0$ of such events, and using the exchangeability of particles, will yield the result.\medskip

For this, consider the stopping times $S=\tau_{m,j}\vee\tau_{n,k} < \infty$ 
and $\sigma = \tau_{m,j+1} < \infty$. 
By construction of the particle system, given $\calF_S$, 
%
%
$\sigma$ is independent of the particle trajectory 
$(X_t^n)_{ \tau_{n,k}\leq t < \tau_{n,k+1}}$. 
Based on Assumption (SK), Lemma~\ref{lem:quasi-left_1} ensures that 
\begin{equation*} 
\P( \tau_{n,k+1} = \sigma |  \calF_{S} ) \un_{S < \sigma } = 0,
\end{equation*}
which straightforwardly implies that
$$\P( \tau_{n,k+1} = \tau_{m,j+1} |  \calF_{\tau_{n,k}} ) \un_{\tau_{m,j} \leq \tau_{n,k} < \tau_{m,j+1}} = 0.$$
Taking the expectation gives the desired result.

 

%
\end{proof}

The purpose of our next lemma is to control the number of branchings.

\begin{Lem}\label{lem:finite}
Under Assumption~(SK), for any $t\geq 0$, $\E[B_t^2] < + \infty$. In particular, $B_t$ is almost surely finite.
\end{Lem}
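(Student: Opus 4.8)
The plan is to identify the $(\calF_t)$-compensator of the counting process $B$ and to exploit the boundedness of $\lambda$ to get deterministic control on it.

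First I would argue that the branching intensity of each particle is governed by $\lambda$. By Assumption~(SK) applied to the underlying dynamics, together with the renewal (strong Markov) structure at the branching times — which, by Lemma~\ref{lem:jumps}, never coincide across distinct particles — the process
$$ t \mapsto B^n_t - \int_0^t \lambda(X^n_s) \d s $$
is a local martingale with respect to $(\calF_t)_{t \ge 0}$ for each $n \in \set{1, \ldots, N}$. Summing over $n$, the compensated counting process
$$ M_t \eqdef B_t - A_t, \qquad A_t \eqdef \int_0^t \sum_{n=1}^N \lambda(X^n_s) \d s, $$
is a local martingale. The crucial point is that, since $\lambda$ is bounded, the continuous compensator satisfies the deterministic bound $A_t \le N \norm{\lambda}_\infty\, t =: c_t$.

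Next I would localize. Set $\tau_R \eqdef \inf\set{t \ge 0 : B_t \ge R}$, an increasing sequence of stopping times. On $[0,t]$ the stopped process $B_{\cdot \wedge \tau_R}$ is bounded by $R$ and $A_{\cdot \wedge \tau_R} \le c_t$, so $M_{\cdot \wedge \tau_R}$ is a genuine (bounded) martingale and optional stopping gives $\E[B_{t \wedge \tau_R}] = \E[A_{t \wedge \tau_R}] \le c_t$. Since $B$ has unit jumps, $B_{t\wedge\tau_R} \ge R\,\un_{\tau_R \le t}$, whence $\P(\tau_R \le t) \le c_t/R \to 0$; this shows $B_t < \infty$ almost surely, and monotone convergence yields $\E[B_t] \le c_t$.

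For the second moment I would use the quadratic variation. Because $B$ is a counting process with unit jumps while $A$ is continuous, $M$ shares the jumps of $B$, so $\b{M,M}_t = B_t$ and its compensator is $\bracket{M,M}_t = A_t$. Hence $\E\b{M_{t\wedge\tau_R}^2} = \E\b{\bracket{M,M}_{t\wedge\tau_R}} = \E[A_{t\wedge\tau_R}] \le c_t$. Writing $B = M + A$ and using $(a+b)^2 \le 2a^2 + 2b^2$ together with $A_{t\wedge\tau_R} \le c_t$,
$$ \E\b{B_{t\wedge\tau_R}^2} \le 2\,\E\b{M_{t\wedge\tau_R}^2} + 2\,\E\b{A_{t\wedge\tau_R}^2} \le 2 c_t + 2 c_t^2. $$
Letting $R \to \infty$ and invoking Fatou's lemma gives $\E[B_t^2] \le 2 c_t + 2 c_t^2 < \infty$, as claimed. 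The main obstacle I anticipate is the first step: rigorously justifying that $A_t$ is the $(\calF_t)$-compensator of $B_t$ in the \emph{interacting} system. One must check that the branching/rebirth mechanism does not alter the absorption intensity $\lambda(X^n_s)$ seen by each particle — resting on the renewal structure at branching times and on Lemma~\ref{lem:jumps} ruling out simultaneous branchings — and that the relevant processes are honest local martingales for the minimal right-continuous filtration generated by the whole system. Once the compensator is correctly identified and bounded by $c_t$, the moment estimates follow from the standard localization and quadratic-variation arguments above.
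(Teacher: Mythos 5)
Your proof is correct, but it takes a genuinely different route from the paper's. The paper's own argument is much shorter: from Assumption~(SK) it observes that each $(B^n_t)_{t\ge 0}$ is a counting process with intensity $(\lambda(X^n_t))_{t\ge 0}$, hence stochastically dominated by a Poisson process of rate $\norm{\lambda}_\infty$; the second moment of each $B^n_t$ is then read off from the Poisson distribution, and $\E[B_t^2]\le N^2(1+\norm{\lambda}_\infty t)\norm{\lambda}_\infty t$ follows by summing over the $N$ particles in $\L^2$. You instead work with the aggregate compensator $A_t=\int_0^t\sum_{n}\lambda(X^n_s)\d s\le N\norm{\lambda}_\infty t$, localize at $\tau_R$, and extract the second moment from the identities $[M,M]_t=B_t$ and $\bracket{M,M}_t=A_t$. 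Both arguments rest on the same key input --- the step you rightly flag as the main obstacle, namely that the branching intensity of particle $n$ in the interacting system is $\lambda(X^n_t)$; this is precisely what the paper asserts at the outset of its proof (and what Lemma~\ref{akcin} in the appendix delivers segment by segment between branching times), so you are on the same footing there. What your version buys is a self-contained treatment of non-explosion: the estimate $\P(\tau_R\le t)\le N\norm{\lambda}_\infty t/R$ establishes $B_t<\infty$ a.s.\ before any moment computation, a point the paper handles implicitly through the Poisson coupling; it also avoids having to construct that domination coupling explicitly. What the paper's version buys is brevity and an immediate explicit constant. Your final bound $2N\norm{\lambda}_\infty t+2N^2\norm{\lambda}_\infty^2 t^2$ is of the same order as the paper's, so nothing is lost quantitatively.
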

\begin{proof} For any $n\in \set{1, \ldots , N}$, the process $(B_t^n)_{t\geq0}$ is a counting process with intensity $(\lambda(X_t^n))_{t\geq0}$. From Assumption~(SK) we deduce that $(B_t^n)_{t\geq0}$ is stochastically upper-bounded by a Poisson process with intensity $\|\lambda\|_\infty$, so that
$$\E[(B_t^n)^2]\leq (1+\|\lambda\|_\infty t)\|\lambda\|_\infty t,$$
and
$$\E[B_t^2]=\E\left[\left(\sum_{n=1}^NB_t^n\right)^2\right]\leq N^2(1+\|\lambda\|_\infty t)\|\lambda\|_\infty t.$$
\end{proof}

\subsection{Martingale analysis}

Following~\cite{v14}, we will decompose the global martingale $M_t$ as given in~\eqref{eq:M_t} into a sum of martingales by considering each particle individually. Moreover, the contribution of the Markovian evolution of particle $n$ between branchings $k$ an $k+1$ will be denoted $t \mapsto \M^{n,k}_t$, whereas the contribution of the $k$-th branching of particle $n$ will be denoted $t \mapsto \calM^{n,k}_t$. This will lead to the martingale decomposition~\eqref{eq:decomp} in Lemma \ref{lem:mart} below. \medskip

First, let us denote $t \mapsto \M_t^n$ the sum
\[
\M_t^n\eqdef    \sum_{k=0}^{B^n_t} \M_t^{n,k},
\]
with
\[
\M_t^{n,k}\eqdef  
\begin{cases}
\dps 0 & \text{if } t < \tau_{n,k} , \\
\dps Q^{T-t}(\ph)(X_t^n) - Q^{T-\tau_{n,k}}(\ph)(X_{\tau_{n,k}}^n)  &  \text{if }  \tau_{n,k} \leq t < \tau_{n,k+1}  , \\
\dps -Q^{T-\tau_{n,k}}(\ph)(X_{\tau_{n,k}}^n)  &  \text{if }  \tau_{n,k+1} \leq t,
\end{cases}
\]
or, equivalently,
\begin{equation}\label{leicj}
\M_t^{n,k} = \un_{t<\tau_{n,k+1} }Q^{T-t}(\ph)(X_t^n)-Q^{T-t\wedge\tau_{n,k}}(\ph)(X_{t\wedge\tau_{n,k}}^n),
\end{equation}
so that 
\begin{equation}\label{eq:Mbb}
 \M_t^n =   Q^{T-t}(\ph)(X_t^n) - \sum_{k=0}^{B^n_t} Q^{T-\tau_{n,k}}(\ph)(X_{\tau_{n,k}}^n).
\end{equation}
By averaging over $n$ we also define, exactly as in~\cite{v14},
$$\M_t=\frac{1}{N}\sum_{n=1}^{N}\M_t^n.$$
Second, $t \mapsto \calM_t^n$ is the contribution due to the branching rule of the particle with label $n$:
\[
\calM_t^n\eqdef    \sum_{k=1}^{B^n_t} \calM_t^{n,k},
\]
with 
\[
\calM_t^{n,k}\eqdef     
\dps \begin{cases}0 &  \text{if } t < \tau_{n,k}, \\
\dps \frac{N-1}{N}  Q^{T-\tau_{n,k}}(\ph)(X_{\tau_{n,k}}^n)- \frac{1}{N} \sum_{m \neq n} Q^{T-\tau_{n,k}}(\ph)(X_{\tau_{n,k}}^m)   & \text{if } t \geq \tau_{n,k},
\end{cases}
\]
or, equivalently,
\begin{equation}\label{aicj}
\calM_t^{n,k}=
(1-\tfrac{1}{N})\p{Q^{T-\tau_{n,k}}(\ph)(X_{\tau_{n,k}}^n)- \frac{1}{N-1} \sum_{m \neq n} Q^{T-\tau_{n,k}}(\ph)(X_{\tau_{n,k}}^m)}\un_{t \geq \tau_{n,k}}.
\end{equation}
In the same way as before, let us denote 
$$\calM_t=\frac{1}{N}\sum_{n=1}^{N}\calM_t^n.$$
Note that what Villemonais denotes $\calM_t$  in~\cite{v14} corresponds to the same quantity times $(1-1/N)$. We start with an important remark that was also made in \cite{v14}. We give the sketch of the proof for self-completeness.

\begin{Lem}
 For all $k \geq 0$ and $n \in \set{1, \ldots, N}$, $t \mapsto \calM_t^{n,k}$ and $t \mapsto \M_t^{n,k}$ are bounded c\`adl\`ag martingales. 
\end{Lem}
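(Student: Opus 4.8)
The plan is to treat the two families separately, disposing of boundedness and the càdlàg property first and then concentrating on the martingale property, which is the only real content. Boundedness is immediate: since $\ph$ is bounded and the (sub-Markovian) semi-group is a contraction, $\norm{Q^{h}(\ph)}_\infty \leq \norm{\ph}_\infty$ for every $h \geq 0$, so each of the three branches defining $\M_t^{n,k}$ and the two branches defining $\calM_t^{n,k}$ is bounded by a fixed multiple of $\norm{\ph}_\infty$. The càdlàg property follows from the recalled fact that $t \mapsto Q^{T-t}(\ph)(X_t)$ admits a càdlàg version, from the càdlàg character of the particle trajectories, and from the observation that the branching times $\tau_{n,k},\tau_{n,k+1}$ enter the definitions only through indicators of the form $\un_{t \geq \tau}$ and $\un_{t < \tau}$.

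For $\M^{n,k}$, I would argue by the strong Markov property together with optional stopping. On the interval $[\tau_{n,k},\tau_{n,k+1})$, particle $n$ evolves exactly as the underlying Markov process $X$ started afresh from $X^n_{\tau_{n,k}}$, and $\tau_{n,k+1}$ is precisely its absorption time, at which $Q^{T-\tau_{n,k+1}}(\ph)(\partial)=0$. Since $t \mapsto Q^{T-t}(\ph)(X_t)$ is a martingale for the underlying process (recalled before Assumption~(CC)), the strong Markov property at $\tau_{n,k}$ shows that $s \mapsto Q^{T-(\tau_{n,k}+s)}(\ph)(X^n_{\tau_{n,k}+s})$, absorbed to the value $0$ when particle $n$ reaches $\partial$, is a martingale for the shifted filtration. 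Representation~(\ref{leicj}) exhibits $\M^{n,k}$ as this martingale stopped at $\tau_{n,k+1}$ and recentred by its value at $\tau_{n,k}$, whence $\M^{n,k}$ is a martingale by optional stopping (legitimate since the martingale is bounded).

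For $\calM^{n,k}$, the process is constant on $[0,\tau_{n,k})$ and on $[\tau_{n,k},\infty)$, with a single jump at the branching time $\tau_{n,k}$; it is therefore of the form $J\,\un_{t \geq \tau_{n,k}}$ with $J$ the $\calF_{\tau_{n,k}}$-measurable jump value appearing in~(\ref{aicj}). The only point to check is that this centred jump vanishes in conditional mean, i.e. $\E[J \mid \calF_{\tau_{n,k}^-}]=0$. By the branching rule, at $\tau_{n,k}$ the absorbed particle $n$ adopts the state of a particle chosen uniformly among the $N-1$ others, independently of $\calF_{\tau_{n,k}^-}$; moreover Lemma~\ref{lem:jumps} guarantees that no other particle branches at $\tau_{n,k}$, so $X^m_{\tau_{n,k}} = X^m_{\tau_{n,k}^-}$ for $m \neq n$ is $\calF_{\tau_{n,k}^-}$-measurable. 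Hence
$$\E\b{ Q^{T-\tau_{n,k}}(\ph)(X^n_{\tau_{n,k}}) \mid \calF_{\tau_{n,k}^-}} = \frac{1}{N-1}\sum_{m \neq n} Q^{T-\tau_{n,k}}(\ph)(X^m_{\tau_{n,k}}),$$
which, compared with~(\ref{aicj}), gives $\E[J \mid \calF_{\tau_{n,k}^-}]=0$. The standard fact that a centred jump at a stopping time yields a martingale then concludes, by splitting on the events $\set{\tau_{n,k} \leq s}$ and $\set{s < \tau_{n,k} \leq t}$.

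The main obstacle I anticipate is the filtration bookkeeping at the branching instant: pinning down what $\calF_{\tau_{n,k}^-}$ contains, justifying the conditional-expectation identity for the uniform branching choice, and invoking the strong Markov property for $\M^{n,k}$ consistently with the minimal right-continuous filtration. The crucial input that makes all of this clean is Lemma~\ref{lem:jumps}: by ruling out simultaneous branchings it ensures the competing particles' positions are left-continuous, hence $\calF_{\tau_{n,k}^-}$-measurable, at $\tau_{n,k}$.
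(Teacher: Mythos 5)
Your proof is correct and follows essentially the same route as the paper: $\M^{n,k}$ is handled by the Markov property at $\tau_{n,k}$ together with optional stopping (this is exactly the content of Lemma~\ref{lem:mart0}), and $\calM^{n,k}$ by the centred-jump-at-a-stopping-time device with $\E[J\mid\calF_{\tau_{n,k}^-}]=0$ coming from the uniform branching rule (Lemma~\ref{albcios}). The one caveat is your appeal to the \emph{strong} Markov property of the underlying process, which the paper does not assume in its general setting; it reaches the same conclusion from the explicit construction of the particle system (Section~\ref{sec:def_ips}), in which the post-branching trajectory is a fresh copy of the underlying process conditionally independent given $\calF_{\tau_{n,k}}$, so that only the ordinary Markov property applied conditionally on $\calF_{\tau_{n,k}}$ is needed.
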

\begin{proof} On the one side, the martingale property for $t \mapsto \calM_t^{n,k}$ can be checked using Lemma \ref{albcios} and by remarking that, by construction of the branching rule, for any bounded function $\psi$ we have
$$
\E \left[\left.\psi(X_{\tau_{n,k}}^n )  -  \frac{1}{N-1} \sum_{m \neq n} \psi(X_{\tau_{n,k}}^m)\right| \calF_{\tau_{n,k}^-}\right]=0
$$
where $\calF_{\tau_{n,k}^-}$ can be interpreted as the sigma-field generated by the particle system including the last jump, but not the index of the particle that is going to branch. On the other side, the martingale property for  $t \mapsto \M_t^{n,k}$ is a direct consequence of Lemma \ref{lem:mart0}.

\end{proof}

We can then remark that the martingales associated with two different particles do almost surely  not jump at the same times, nor at branching times. We adopt the classic notation $\Delta Z_t=Z_t-Z_{t-}$.
 
\begin{Lem}\label{jump2}
Under Assumptions~(SK) and~(CC), for any $1 \leq m \neq n \leq N$, 
$$\P( \exists t \geq 0, \, \Delta \M^m_t \neq 0\ \&\ \Delta \M^n_t \neq 0)= 0.$$ 
and
$$ \P( \exists  j \geq 0, \, \Delta \M^n_{\tau_{m,j}} \neq 0)= 0.$$
\end{Lem}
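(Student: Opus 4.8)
The plan is to first pin down where each martingale $\M^n$ can jump, and then eliminate coincidences case by case. From the closed form \eqref{eq:Mbb}, the subtracted process $\sum_{k=0}^{B^n_t} Q^{T-\tau_{n,k}}(\ph)(X^n_{\tau_{n,k}})$ is piecewise constant with increments only at the branching times $\tau_{n,k}$; consequently the jump times of $t \mapsto \M^n_t$ are contained in the jump times of the c\`adl\`ag trajectory $t \mapsto X^n_t$. These are of exactly two types: the branching times $\tau_{n,k}$, at which $\Delta\M^n_{\tau_{n,k}} = -Q^{T-\tau_{n,k}}(\ph)(X^n_{\tau_{n,k}^-})$, and the ``interior'' jumps inherited from the underlying Markovian evolution of $X^n$ strictly between two consecutive branchings, at which $\M^n$ jumps by $\Delta\, Q^{T-t}(\ph)(X^n_t)$. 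Being c\`adl\`ag, $X^n$ has at most countably many jumps on $[0,T]$, and by Lemma~\ref{lem:finite} its branchings are almost surely finite in number; this countability is what will allow us to sum the null events below.

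For the first assertion it is enough to prove that, for $m \neq n$, the trajectories $X^m$ and $X^n$ almost surely never jump at the same instant. I would split according to the nature of the two putative simultaneous jumps. If both are branching times we are exactly in the setting of Lemma~\ref{lem:jumps}, giving $\tau_{n,k} \neq \tau_{m,j}$ almost surely. In every other case at least one of the two times — call it $\sigma$ — is a totally inaccessible time carrying no atom conditionally on the past: a branching time is totally inaccessible by the soft killing Assumption~(SK), and an interior jump occurs at a totally inaccessible time by quasi-left-continuity of the underlying process. Following the scheme of Lemma~\ref{lem:jumps}, I would condition on $\calF_S$, where $S$ is the last event preceding $\sigma$ on the current inter-branching intervals of the two particles. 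By construction of the Fleming-Viot dynamics the particles evolve independently between branchings, so given $\calF_S$ the time $\sigma$ is independent of the trajectory of the other particle on that interval, and its conditional law is diffuse. Hence $\sigma$ almost surely avoids the (conditionally fixed, countable) set of jump times of the other particle, which is precisely what Lemma~\ref{lem:quasi-left_1} delivers: $\P(\sigma \text{ is a jump time of the other particle} \mid \calF_S)\,\un_{S < \sigma} = 0$. Summing over the countably many candidate pairs of jumps and over particle labels yields the first claim.

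The second assertion is obtained by the same mechanism applied directly to a fixed branching time $\tau_{m,j}$. The martingale $\M^n$ jumps at $\tau_{m,j}$ only if $\tau_{m,j}$ is a branching time of particle $n$, which Lemma~\ref{lem:jumps} forbids, or if $\tau_{m,j}$ coincides with an interior Markov jump of $X^n$, which the conditioning argument above rules out, since the diffuse time $\tau_{m,j}$ almost surely avoids the countable, conditionally independent set of jump times of $X^n$. It is worth stressing that this second assertion does not follow from the first: at $\tau_{m,j}$ the martingale $\M^m$ itself may fail to jump, namely when $Q^{T-\tau_{m,j}}(\ph)(X^m_{\tau_{m,j}^-}) = 0$, so ``no simultaneous jumps of $\M^m$ and $\M^n$'' is not enough and a direct argument is genuinely needed.

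The step I expect to be most delicate is justifying, uniformly over all the countably many candidate instants, that a conditionally diffuse stopping time almost surely avoids the jump times of a conditionally independent c\`adl\`ag trajectory. The cleanest route is to reduce each individual instance, by conditioning at the appropriate stopping time $S$, to the quasi-left-continuity statement of Lemma~\ref{lem:quasi-left_1}, exactly as was done for Lemma~\ref{lem:jumps}, using Lemma~\ref{lem:finite} to ensure only countably many such reductions are needed so that the null events can be safely summed.
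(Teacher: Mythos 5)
Your global architecture --- reduce to countably many candidate coincidences indexed by $(k,j)$, condition on $S=\tau_{m,j}\vee\tau_{n,k}$, exploit the conditional independence of the two particles between branchings, then sum the null events and invoke exchangeability --- is exactly the paper's, and your observation that the second assertion does not follow from the first and needs a direct argument is correct. However, there are two genuine gaps. The first is the reduction of the first assertion to ``$X^m$ and $X^n$ never jump at the same instant''. This rests on the claim that the jump times of $\M^n$ are contained in those of the trajectory $X^n$. Between branchings, $\M^{n,k}$ moves like the c\`adl\`ag martingale $t\mapsto Q^{T-t}(\ph)(X^n_t)$, and nothing in the standing hypotheses (measurable semi-group, Polish state space, c\`adl\`ag paths) guarantees that this martingale is continuous at a continuity point of $X^n$: that would require joint regularity of $(t,x)\mapsto Q^{t}(\ph)(x)$, which is not assumed. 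The paper therefore never passes through the trajectories; it works directly with the random set $\calD^{m,j}$ of jump times of the \emph{martingale} $\M^{m,j}$, exhausts it by a sequence of stopping times $(\sigma_l)_{l\geq 1}$ that are conditionally independent of $(X^n_t)_{\tau_{n,k}\leq t<\tau_{n,k+1}}$ given $\calF_S$, and asks whether $\M^{n,k}$ can jump at $\sigma_l$.

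The second and more serious gap is the treatment of the ``interior'' jumps. You invoke ``quasi-left-continuity of the underlying process'', which is not among the hypotheses of the paper, and you attribute to Lemma~\ref{lem:quasi-left_1} the statement that a conditionally diffuse time avoids all jump times of the other particle; that lemma only concerns the event $\set{\tau_\partial=\sigma}$ and is a consequence of (SK) alone. The step that actually closes the argument is Lemma~\ref{lem:quasi-left_2}: under Assumption~(CC) the predictable quadratic variation $\bracket{\M^{n,k},\M^{n,k}}_t=\int_\tau^{t\wedge\tau_\partial}\Gamma_{T-s}(\ph)(X^n_s)\d s$ is \emph{continuous}, and this continuity of the compensator is what forces $\P\p{\Delta\M^{n,k}_{\sigma_l}\neq 0\mid\calF_S}=0$ for a stopping time $\sigma_l$ with $S<\sigma_l<+\infty$ that is conditionally independent of particle $n$ given $\calF_S$. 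Your proof never uses Assumption~(CC), although the lemma explicitly requires it; in particular, in the case of two simultaneous interior jumps neither time is known to have a diffuse conditional law, so your ``diffuse time avoids a countable conditionally independent set'' mechanism has nothing to anchor it. The countability bookkeeping via Lemma~\ref{lem:finite} and the appeal to Lemma~\ref{lem:jumps} for the branching-versus-branching case are fine; the missing ingredient is precisely the (CC)-based quasi-left-continuity of the martingales $\M^{n,k}$ themselves.
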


\begin{proof} 
The proof is similar to the one of Lemma~\ref{lem:jumps}. Let us fix $k,j \geqslant 0$, and $m\neq n \in \set{1, \ldots , N}$, and consider the stopping time $S=\tau_{m,j}\vee\tau_{n,k}$. Let us also consider the random set 
$$\calD^{m,j} \eqdef \p{ \set{t \in (S,\tau_{m,j+1}), \,  \Delta \M^{m,j}_t \neq 0}\cup \set{\tau_{m,j+1} } } \cap (0,+\infty).$$ 
If we show that 
$$\P(\exists t \in \calD^{m,j}, \, \Delta \M^{n,k}_{t} \neq 0 ) = 0,$$ 
then this will prove the claimed results using exchangeability of particles and by taking the countable union over $j \geq 0$ and $k \geq 0$ of the latter events.\medskip

Since $\M^{m,j}$ is c\`adl\`ag, we know that we can construct a sequence of stopping times $(\sigma_l)_{ l \geq 1} $ that exhausts $\calD^{m,j}$: $\set{\sigma_l, l \geq 1} = \calD^{m,j}$  (see e.g.~\cite{js03} Proposition 1.32). Moreover, by construction of the particle system, the sequence $(\sigma_l)_{ l \geq 1}$ is independent of $(X_t^n)_{\tau_{n,k} \leq t < \tau_{n,k+1}}$ given $\calF_S$.\medskip


Finally, based on Assumption~(CC), Lemma~\ref{lem:quasi-left_2} ensures that, after averaging conditionally on $\calF_{S}$ and remarking that by construction $ S < \sigma_l < +\infty $, we have
\begin{equation*}
\P( \Delta \M^{n,k}_{\sigma_l} \neq 0 |  \calF_{S} )  = 0.
\end{equation*}
This concludes the proof.
\end{proof}

We can now check that $M_t$ has the appropriate decomposition.

\begin{Lem}\label{lem:mart}
Recall the notation 
$$
M_t \eqdef \gamma_t^N(Q^{T-t}(\ph)) \qquad t \in [0,T].
$$
If Assumptions~(SK) and~(CC) are satisfied, then for all $n \in \set{1, \ldots, N}$, $t \mapsto \calM^n_t$ and $t \mapsto \M^n_t$ are square integrable c\`adl\`ag martingales, and one has
 \begin{equation}\label{eq:decomp}
 \d M_t = p^N_{t-} \p{\d \M_t+\d \calM_t}.
\end{equation}
\end{Lem}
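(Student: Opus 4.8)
The plan is to establish the decomposition~\eqref{eq:decomp} by first understanding how the process $t \mapsto M_t = \gamma_t^N(Q^{T-t}(\ph))$ evolves between consecutive jumps and across branching events, and then to identify the increments with the two martingale families $\M^n$ and $\calM^n$ already defined. Recall that $\gamma_t^N = p_t^N \eta_t^N$ with $p_t^N = (1-\tfrac1N)^{B_t}$, so that
\[
M_t = p_t^N \cdot \frac{1}{N}\sum_{n=1}^N Q^{T-t}(\ph)(X_t^n).
\]
First I would treat the two sources of variation separately. Between branchings, $p_t^N$ is constant (it only changes at branching times through the factor $(1-\tfrac1N)$), and each $Q^{T-t}(\ph)(X_t^n)$ evolves as the c\`adl\`ag martingale of Lemma~\ref{lem:mart0}; its increment over a branching-free interval is exactly captured by $\M_t^{n,k}$ as defined in~\eqref{leicj}. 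Thus on such intervals $\d M_t = p_{t-}^N \, \d \M_t$, with no contribution from $\calM$.

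Second, I would analyze a single branching time $\tau_{n,k}$ of particle $n$. At such a time two things happen simultaneously: the factor $p_t^N$ is multiplied by $(1-\tfrac1N)$, and the branching particle jumps from $\partial$ (where $Q^{T-t}(\ph)(\partial)=0$) onto the state of a uniformly chosen other particle. The resulting jump in $M$ must be matched against $\Delta\calM_t^{n,k}$ from~\eqref{aicj} together with the jump of $\M_t^{n,k}$. The key algebraic identity to verify is that the change in $M$ at $\tau_{n,k}$ equals $p_{\tau_{n,k}^-}^N(\Delta\M_{\tau_{n,k}} + \Delta\calM_{\tau_{n,k}})$; here the factor $(1-\tfrac1N)$ appearing in both $p^N$ and in the definition of $\calM^{n,k}$ is precisely what makes the bookkeeping consistent. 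Crucially, Lemmas~\ref{lem:jumps} and~\ref{jump2} guarantee that almost surely no two particles jump simultaneously and that the $\M^n$ do not jump at the branching times of other particles, so the increments of the different $n$-indexed pieces never interfere and the global increment is an unambiguous sum of single-particle contributions. This is what allows one to pass from the per-particle identities to the summed form $\d M_t = p_{t-}^N(\d\M_t + \d\calM_t)$ after dividing by $N$.

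Finally, I would assemble these pieces into the stochastic-differential statement and verify the integrability claims. Square integrability of $\M^n$ and $\calM^n$ follows from boundedness of $Q^{T-t}(\ph)$ together with Lemma~\ref{lem:finite}, which controls the number of branchings via $\E[B_t^2]<\infty$; this ensures the finitely-many summands defining $\M_t^n$ and $\calM_t^n$ stay in $L^2$. The main obstacle I anticipate is the careful treatment of the branching increment: one must check that the instantaneous rebirth, which simultaneously resets the branching particle and decrements $p^N$, splits exactly into the ``Markovian restart'' part absorbed by the new $\M^{n,k+1}$ and the ``averaging'' part absorbed by $\calM^{n,k}$, with no residual term. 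Getting the left-limit prefactor $p_{t-}^N$ right (rather than $p_t^N$) and confirming that the $-Q^{T-\tau_{n,k}}(\ph)(X_{\tau_{n,k}}^n)$ terms telescope correctly across successive branchings of the same particle is where the bulk of the genuine verification lies; the non-simultaneity lemmas reduce this to a one-particle, one-branching computation, which is then routine.
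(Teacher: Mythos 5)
Your proposal follows essentially the same route as the paper: square integrability via boundedness of $\ph$ together with $\E[B_t^2]<\infty$ from Lemma~\ref{lem:finite}, the identity between branchings coming from Lemma~\ref{lem:mart0}, and the jump of $M$ at $\tau_{n,k}$ split into the $\calM^{n,k}$ averaging part and the particle's own $\M$-jump, with Lemmas~\ref{lem:jumps} and~\ref{jump2} ruling out interference from simultaneous jumps. The only slip is one of bookkeeping: at $\tau_{n,k}$ the Markovian contribution to $\Delta M$ is the termination of $\M^{n,k-1}$, namely the drop $-\lim_{\eps\downarrow 0}Q^{T-\tau_{n,k}+\eps}(\ph)(X^n_{\tau_{n,k}-\eps})$, and not a jump of the freshly started piece $\M^{n,k}$ (which begins at zero), but this does not affect the validity of your outline.
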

\begin{proof} 
(i) Recall that
$$\M_t^n = \sum_{k=0}^{B^n_t} \M_t^{n,k}$$
where $\M_t^{n,k}$ is bounded since $\ph$ is bounded, and $\E[(B^n_t)^2]<\infty$ by Lemma \ref{lem:finite}. Hence $t \mapsto\M_t^n$ is a square integrable c\`adl\`ag martingale, and so is $t \mapsto\M_t$. Mutatis mutandis, the same reasoning applies for $t \mapsto\calM_t$.\medskip
 
 (ii)~By construction, \eqref{eq:decomp} is valid between branching times. At branching time $\tau_{n,k}$, we have
 \begin{align*}
M_{\tau_{n,k}}-M_{\tau_{n,k}^-} & = \gamma_{\tau_{n,k}}^N(Q^{T-\tau_{n,k}}(\ph)) - \lim_{\eps \downarrow 0} \gamma_{\tau_{n,k}-\eps}^N(Q^{T-\tau_{n,k}+\eps}(\ph))  \\
 &= p_{\tau_{n,k}^-}^N\p{ (1-\tfrac{1}{N}) \eta_{\tau_{n,k}}^N(Q^{T-\tau_{n,k}}(\ph)) - \lim_{\eps \downarrow 0} \eta_{\tau_{n,k}-\eps}^N(Q^{T-\tau_{n,k}+\eps}(\ph))}. 
 \end{align*}
 From Lemma~\ref{jump2}, we know that for $m\neq n$, almost surely,
 $$\lim_{\eps \downarrow 0} Q^{T-\tau_{n,k}+\eps}(\ph)(X^m_{\tau_{n,k}-\eps})=Q^{T-\tau_{n,k}}(\ph)(X^m_{\tau_{n,k}}).$$
 By ~\eqref{leicj}, we also know that 
 $$\lim_{\eps \downarrow 0} Q^{T-\tau_{n,k}+\eps}(\ph)(X^n_{\tau_{n,k}-\eps}) = \M^{n,k-1}_{\tau_{n,k}} -   \M^{n,k-1}_{\tau_{n,k}^-}.$$
 As a consequence,
 \begin{align*}
M_{\tau_{n,k}}-M_{\tau_{n,k}^-}=&\tfrac{N-1}{N^2} p_{\tau_{n,k}^-}^N \p{ Q^{T-\tau_{n,k}}(\ph)(X_{\tau_{n,k}}^n) -\frac{1}{N-1}\sum_{m \neq n}  Q^{T-\tau_{n,k}}(\ph)(X_{\tau_{n,k}}^m)} \\
   &   -\tfrac{1}{N} p_{\tau_{n,k}^-}^N \p{\M^{n,k-1}_{\tau_{n,k}} -   \M^{n,k-1}_{\tau_{n,k}^-}} . 
 \end{align*}
 But, by (\ref{aicj}), we also have
\begin{align*}
 &\calM^{n,k}_{\tau_{n,k}} -   \calM^{n,k}_{\tau_{n,k}^-}\\
 &\quad =\left(1-\frac{1}{N}\right)  \p{ Q^{T-\tau_{n,k}}(\ph)(X_{\tau_{n,k}}^n) -\frac{1}{N-1}\sum_{m \neq n}  Q^{T-\tau_{n,k}}(\ph)(X_{\tau_{n,k}}^m)},
\end{align*}
and, for all $j\neq k$,
$$\calM^{n,j}_{\tau_{n,k}} -   \calM^{n,j}_{\tau_{n,k}^-}=0.$$

Finally, for all $j\neq k-1$, we obviously have
$$\M^{n,j}_{\tau_{n,k}} -   \M^{n,j}_{\tau_{n,k}^-}\ =0.$$ 
Putting all things together, we finally get
\begin{align*}  
& M_{\tau_{n,k}}-M_{\tau_{n,k}^-} = \frac{1}{N}p_{\tau_{n,k}^-}^N \left\{( \calM^n_{\tau_{n,k}} -   \calM^n_{\tau_{n,k}^-}) +  (\M^n_{\tau_{n,k}} -   \M^n_{\tau_{n,k}^-})\right\}.
 \end{align*}
 
 \end{proof}

The remarkable fact is that the martingale contributions are orthogonal to each other. We recall that two local square integrable martingales are orthogonal if their product is itself a local martingale.
 
 \begin{Lem} All the bounded martingales $t \mapsto \calM^{n,k}_t$,  $t \mapsto \calM^{m,j}_t$, $t \mapsto \M^{n,k}_t$, and $t \mapsto \M^{m,j}_t$ are mutually orthogonal if either $j\neq k$ or $m\neq n$. Therefore, the predictable quadratic variation of the full martingale, which uniquely exists by Doob-Meyer decomposition, is given by the sum of the predictable quadratic variation of each contribution, that is
\begin{equation}\label{eq:quad}
\d \bracket{ M, M}_t = \frac{( p^N_{t-}  )^2}{N^2}\sum_{n=1}^N \sum_{k=0}^{B^n_t} \p{   \d  \bracket{ \M^{n,k},\M^{n,k} }_t +\d \bracket{ \calM^{n,k},\calM^{n,k} }_t }.
\end{equation}
\end{Lem}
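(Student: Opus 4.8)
The plan is to establish orthogonality through the predictable covariation and then read off \eqref{eq:quad} by bilinearity. Recall that two bounded c\`adl\`ag martingales $U,V$ satisfy $\bracket{U,V}=0$ as soon as $UV$ is a local martingale; since $\d(UV)=U_{-}\d V+V_{-}\d U+\d[U,V]$ and the first two terms are local martingales, it is enough to show that the optional covariation $[U,V]$ is a local martingale, in which case its compensator $\bracket{U,V}$ vanishes. Using $[U,V]=\bracket{U^c,V^c}+\sum_{s\le\cdot}\Delta U_s\Delta V_s$, I would check separately that the continuous parts are orthogonal and that the common-jump sum is a local martingale, treating the $\calM$–$\calM$, $\M$–$\M$ and $\calM$–$\M$ pairs in turn.

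The cleanest mechanism concerns the branching martingales: each $\calM^{n,k}$ is a pure-jump martingale with its unique jump at $\tau_{n,k}$, so $(\calM^{n,k})^c=0$ and $\calM^{n,k}_t=\Delta\calM^{n,k}_{\tau_{n,k}}\un_{t\ge\tau_{n,k}}$. Hence for any c\`adl\`ag martingale $V$ one has $[\calM^{n,k},V]_t=\Delta\calM^{n,k}_{\tau_{n,k}}\,\Delta V_{\tau_{n,k}}\un_{t\ge\tau_{n,k}}$, which is identically zero unless $V$ jumps at $\tau_{n,k}$. By Lemma \ref{lem:jumps} distinct particles never branch simultaneously, and by Lemma \ref{jump2} the martingale of one particle does not jump at the branching times of another, so among the distinct-index partners the only one that can jump at $\tau_{n,k}$ is $V=\M^{n,k-1}$, which jumps at its right endpoint $\tau_{n,k}$. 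For this pair the key point is that $\Delta\M^{n,k-1}_{\tau_{n,k}}=-Q^{T-\tau_{n,k}}(\ph)(X^n_{\tau_{n,k}^-})$ equals the value at $\tau_{n,k}$ of the predictable left-continuous process $H_s\eqdef-Q^{T-s}(\ph)(X^n_{s^-})$; therefore $[\calM^{n,k},\M^{n,k-1}]=\int_0^\cdot H_s\,\d\calM^{n,k}_s$ is a stochastic integral of a predictable, locally bounded integrand against a martingale, hence a local martingale, and $\bracket{\calM^{n,k},\M^{n,k-1}}=0$. This is exactly the place where the branching increment, being centered given $\calF_{\tau_{n,k}^-}$, makes the common jump harmless. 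All remaining pairs involving some $\calM^{n,k}$ have no common jump and are orthogonal for free.

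For two distinct pieces $\M^{n,k}$ and $\M^{m,j}$ I distinguish two cases. If $n=m$ and $k\neq j$, the two martingales are non-constant only on the disjoint intervals $[\tau_{n,k},\tau_{n,k+1})$ and $[\tau_{n,j},\tau_{n,j+1})$, each being continuous at its left endpoint, so at any time at most one of them has a nonzero increment; in particular they share no jump and their continuous parts have disjoint supports, whence $[\M^{n,k},\M^{n,j}]=0$. If $n\neq m$, Lemma \ref{jump2} excludes common jumps, while the continuous martingale parts are orthogonal because between branchings distinct particles evolve as independent copies of $X$ driven by independent noise, so $\bracket{(\M^{n,k})^c,(\M^{m,j})^c}=0$. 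In both cases $[\M^{n,k},\M^{m,j}]$ is a local martingale and the predictable covariation vanishes.

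Finally I would assemble \eqref{eq:quad} by bilinearity. Writing $\M_t+\calM_t=\frac1N\sum_{n,k}(\M^{n,k}_t+\calM^{n,k}_t)$ and using $\d M_t=p^N_{t-}(\d\M_t+\d\calM_t)$, I get $\d\bracket{M,M}_t=\frac{(p^N_{t-})^2}{N^2}\sum_{(n,k),(m,j)}\d\bracket{\M^{n,k}+\calM^{n,k},\M^{m,j}+\calM^{m,j}}_t$; every off-diagonal contribution vanishes by the orthogonality above, and in each diagonal term the remaining cross term $\bracket{\M^{n,k},\calM^{n,k}}$ also vanishes by the single-jump argument of the second paragraph (the only jump of $\calM^{n,k}$ occurs at $\tau_{n,k}$, where $\M^{n,k}$ is continuous), leaving exactly $\bracket{\M^{n,k},\M^{n,k}}+\bracket{\calM^{n,k},\calM^{n,k}}$. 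I expect the main obstacle to be the genuinely simultaneous jump of $\calM^{n,k}$ and $\M^{n,k-1}$ at $\tau_{n,k}$: this is the single pair that cannot be dispatched by disjoint supports, and the predictable-integral representation is the efficient way to see that the compensator of this common-jump covariation is zero. A secondary technical point, harmless in the PDMP case (no continuous part) but requiring the independence built into the particle construction in general, is the orthogonality of the continuous martingale parts of distinct particles.
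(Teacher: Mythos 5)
Your proof is correct and follows essentially the same route as the paper's: disjoint variation times dispose of most pairs, the one genuine common jump --- that of $\calM^{n,k}$ and $\M^{n,k-1}$ at $\tau_{n,k}$ --- is compensated because the jump of the latter equals the value at $\tau_{n,k}$ of the predictable left-continuous process $-Q^{T-s}(\ph)(X^n_{s^-})$ (the paper runs this very computation, only packaged at the level of the summed martingales $\M^n$ and $\calM^n$), and the cross-particle case rests on independence between branchings. The single spot where you are more informal than the paper is the orthogonality of the continuous parts for $n\neq m$: in the general c\`adl\`ag Markov setting there is no ``independent noise'' to invoke, and the paper instead freezes killed particles in $\partial$, uses their conditional independence given $\calF_{\tau_{n,k}\vee\tau_{m,j}}$ and Doob's optional sampling to show that the product $Q^{T-t}(\ph)(\widetilde{X}^n_t)\,Q^{T-t}(\ph)(\widetilde{X}^m_t)$ is a martingale on the relevant interval --- which is the rigorous form of the independence argument you gesture at.
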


\begin{proof}
The martingales  $t \mapsto \calM^{n,k}_t$ and either $t \mapsto \calM^{m,j}_t$, or $t \mapsto \M^{m,j}_t$, do not vary at the sames times if either $j \neq k$ or $n \neq m$, so they are automatically orthogonal. In the same way, the martingales $t \mapsto \M^{n,k}_t$  and $t \mapsto \M^{n,j}_t$ do not vary at the same time if $j \neq k$, and the conclusion is the same. Therefore, it remains to check the orthogonality of
\begin{itemize}
\item[(i)] $t \mapsto \calM^{n,k}_t$ and $t \mapsto \M^{n,k}_t$;
\item[(ii)] $t \mapsto \M^{n,k}_t$ and $t \mapsto \M^{m,j}_t$ for $m \neq n$.
\end{itemize}

(i) For the first point,  we have
\begin{align*}  
\M^n_{\tau_{n,k}}\calM^n_{\tau_{n,k}}- \M^n_{\tau_{n,k}^-}\calM^n_{\tau_{n,k}^-}
&=(\M^n_{\tau_{n,k}}-\M^n_{\tau_{n,k}^-})
(\calM^n_{\tau_{n,k}}-\calM^n_{\tau_{n,k}^-})\\
+&\calM^n_{\tau_{n,k}^-}
(\M^n_{\tau_{n,k}}-\M^n_{\tau_{n,k}^-})
+\M^n_{\tau_{n,k}^-}(\calM^n_{\tau_{n,k}}-\calM^n_{\tau_{n,k}^-}). 
\end{align*}
And, as noticed at the end of the proof of Lemma \ref{lem:mart}, we have
$$(\M^n_{\tau_{n,k}}-\M^n_{\tau_{n,k}^-})(\calM^n_{\tau_{n,k}}-\calM^n_{\tau_{n,k}^-})=-Q^{T-\tau_{n,k}}(\ph)(X_{\tau_{n,k}^-}^n)( \calM^n_{\tau_{n,k}} - \calM^n_{\tau_{n,k}^-}).$$
In other words, 
$$
d(\calM_t\M_t)=\calM_{t^-}d\M_t + \M_{t^-}d\calM_t - Q^{T-t}(\ph)(X_{t^-}^n)d\calM_t.
$$
Denoting as usual
$$d[\calM,\M]_t\eqdef  \d(\calM_t\M_t)-\calM_{t_-}\d\M_t-\M_{t_-}\d\calM_t,$$
we have
$$\d \b{\calM^{n}, \M^{n}}_t =  - Q^{T-t}(\ph)(X_{t-}^n) \d \calM^{n,k}_t,$$ 
which defines a martingale, and the first point is complete.\medskip

(ii)~ Concerning the second point, by construction, on each side of the interval $I\eqdef  (\tau_{n,k} \vee\tau_{m,j},  \tau_{n,k+1} \wedge \tau_{m,j+1}]$, at least one of the martingales $\M^{n,k}_t$ and $\M^{m,j}_t$ is constant, so that we can write
$$ \d  \p{\M^{n,k}_t \M^{m,j}_t }= \un_{ t \in I} \d  \p{\M^{n,k}_t \M^{m,j}_t } + \un_{ t \notin I} \p{\M^{n,k}_{t-} \d \M^{m,j}_t + \M^{m,j}_{t-} \d \M^{n,k}_t } . $$
It thus remains to show that the first term of the r.h.s. of the latter identity is a martingale. To prove this last point, let us denote by $(\widetilde{X}^{\ell}_t)_{\ell \in  \set{1, \ldots , N}, \, t \geq 0}$ the modified particle system where particles killed after $\tau_{n,k} \vee \tau_{m,j}$ stay in the cemetery point instead of branching, i.e. $\widetilde{X}^{\ell}_t = \partial$ if $t \geq \tau_{\ell,i}\geq\tau_{n,k} \vee \tau_{m,j}$ for some index $i$, and $\widetilde{X}^{\ell}_t ={X}^{\ell}_t$ otherwise. By construction, the particles $(\widetilde{X}^{\ell}_t)_{\ell \in  \set{1, \ldots , N}, \, t \geq 0}$ are mutually independent given $\calF_{\tau_{n,k} \vee \tau_{m,j}}$. Moreover, by definition, we have
$$
 \un_{ t \in I} \d  \p{\M^{n,k}_t \M^{m,j}_t } = \un_{ t \in I} \d \p{ Q^{T-t}(\ph)(\widetilde{X}^{n}_t) Q^{T-t}(\ph)(\widetilde{X}^{m}_t)},
$$
which is indeed a martingale, as can be checked easily using conditional independence and Doob's optional sampling theorem.

\end{proof}

\subsection{$\L_2$ estimate}

Applying the same reasoning as in~\cite{v14}, it is possible to prove the next result. Let us emphasize that this result is valid for any $\ph$ in $C_b(F)$, the set of continuous and bounded functions on $F$, and not only for $\ph$ in $\cal D$.

\begin{Pro}\label{pro:estimate}
Under  Assumption (SK),  for any bounded and continuous function $\ph$, we have
\[
\E \b{ \p{ \gamma^N_T(\ph) - \gamma_T(\ph) }^2 } \leq  \frac{7 \norm{\ph}^2_\infty}{N}.
\]
\end{Pro}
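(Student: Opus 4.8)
The plan is to exploit the bounded c\`adl\`ag martingale $M_t=\gamma^N_t(Q^{T-t}(\ph))$ of~\eqref{eq:M_t}, which satisfies $|M_t|\le\norm{\ph}_\infty$, $M_T=\gamma^N_T(\ph)$, and, since $p^N_0=1$, $M_0=\frac1N\sum_{n=1}^N Q^T(\ph)(X^n_0)$. First I would split $\gamma^N_T(\ph)-\gamma_T(\ph)=(M_T-M_0)+(M_0-\gamma_T(\ph))$. Because the $X^n_0$ are i.i.d.\ with law $\eta_0$ and $\E[M_0]=\eta_0(Q^T(\ph))=\gamma_T(\ph)$, the term $M_0-\gamma_T(\ph)$ is centered and $\calF_0$-measurable, while $\E[M_T-M_0\mid\calF_0]=0$; hence the two terms are orthogonal in $L^2$ and their second moments add. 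The initial term is immediate: $\E[(M_0-\gamma_T(\ph))^2]=\frac1N\V_{\eta_0}(Q^T(\ph))\le\frac{\norm{\ph}^2_\infty}{N}$, accounting for one of the seven units in the constant.

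It then remains to bound $\E[(M_T-M_0)^2]=\E[\bracket{M,M}_T]$. Here I would invoke the orthogonal decomposition $\d M_t=p^N_{t-}(\d\M_t+\d\calM_t)$ of Lemma~\ref{lem:mart} together with the mutual orthogonality of the elementary contributions, which yields $\E[\bracket{M,M}_T]=\frac1{N^2}\E[\int_0^T (p^N_{s-})^2\,\d(\sum_{n,k}\bracket{\M^{n,k},\M^{n,k}}_s+\sum_{n,k}\bracket{\calM^{n,k},\calM^{n,k}}_s)]$. The crucial device, and the step I expect to carry the whole argument, is that $(p^N_{s-})^2=(1-\tfrac1N)^{2B_{s-}}$ must be \emph{kept} rather than bounded by $1$: each elementary contribution lives on one branching segment and inherits the geometric weight $(1-\tfrac1N)^{2j}$ of that segment, and summing the resulting geometric series over all segments is exactly what produces a constant free of any dependence on $\norm{\lambda}_\infty$ or $T$.

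For the free part I would use that $\M^{n,k}$ starts at $0$ and is eventually frozen at $-Q^{T-\tau_{n,k}}(\ph)(X^n_{\tau_{n,k}})$, so conditioning on $\calF_{\tau_{n,k}}$ gives $\E[\bracket{\M^{n,k},\M^{n,k}}_\infty\mid\calF_{\tau_{n,k}}]=(Q^{T-\tau_{n,k}}(\ph)(X^n_{\tau_{n,k}}))^2\le\norm{\ph}^2_\infty$ — a per-segment bound valid for \emph{any} bounded $\ph$, precisely because $Q^{T-t}(\ph)(X^n_t)$ is a bounded martingale on the segment, with no recourse to the carr\'e du champ. Bounding the predictable weight on each segment by its left-endpoint value and summing over $(n,k)$, the instants $\tau_{n,k}$ reindex as the starting times ($k=0$, contributing $\sum_n 1=N$) and the system branching times ($k\ge1$, contributing $\sum_{j\ge1}(1-\tfrac1N)^{2j}\le\frac{N^2}{2N-1}\le N$), for a total $\le2N$; multiplied by $\frac1{N^2}$ this gives $\le\frac{2\norm{\ph}^2_\infty}{N}$. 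For the branching part, each $\calM^{n,k}$ carries a single jump bounded by $2\norm{\ph}_\infty$, whence $\E[\bracket{\calM^{n,k},\calM^{n,k}}_\infty]\le4\norm{\ph}^2_\infty$, and the same summation $\sum_{j\ge1}(1-\tfrac1N)^{2(j-1)}=\frac{N^2}{2N-1}\le N$ yields $\le\frac{4\norm{\ph}^2_\infty}{N}$. Adding the three contributions ($1+2+4$) delivers the announced $\frac{7\norm{\ph}^2_\infty}{N}$.

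The only genuinely delicate point, and the one I would treat most carefully, is the justification of the orthogonal decomposition and of the cross-bracket cancellations in the \emph{general} bounded setting: the earlier derivation of~\eqref{eq:quad} leaned on Lemma~\ref{jump2}, and hence on Assumption~(CC), whereas here I must re-derive the vanishing of the cross brackets $\bracket{\M^{n,k},\M^{m,j}}$ and $\bracket{\M^{n,k},\calM^{n,k}}$ from the conditional independence of the particles between branchings alone, which holds for every bounded $\ph$. Everything else is the geometric bookkeeping above; I emphasize that the naive estimate $(p^N_{s-})^2\le1$ would replace the summable geometric series by the expected number of branchings $\E[B_T]/N\sim\norm{\lambda}_\infty T$ and thereby destroy the uniform constant, which is precisely why preserving the weight is essential.
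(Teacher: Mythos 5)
Your proposal is correct and reaches the constant $7$, but it departs from the paper's proof at the key step, namely the free-evolution ($\M$) terms. The paper does exactly what you declare impossible: it bounds $(p^N_{t-})^2\le 1$ and still gets a constant free of $\norm{\lambda}_\infty$ and $T$ --- not by counting branchings, but by writing
$\sum_{k=0}^{B^n_T-1}\p{Q^{T-\tau_{n,k}}(\ph)(X^n_{\tau_{n,k}})}^2\le\norm{\ph}_\infty^2\sum_{k=0}^{B^n_T-1}Q^{T-\tau_{n,k}}(\un_F)(X^n_{\tau_{n,k}})$
and observing that the right-hand sum has expectation at most $1$, by applying the identity~\eqref{eq:Mbb} with the test function $\un_F$ (each summand is a conditional survival probability and the sum telescopes against $\un_F(X^n_T)\le 1$). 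This yields $\E[(\M^n_T)^2]\le 5\norm{\ph}_\infty^2$ and the split $1+5+1=7$, whereas your geometric-series bookkeeping gives $1+2+4=7$. So your closing claim that preserving the weight $(p^N_{s-})^2$ is \emph{essential} is false; still, your alternative is sound and arguably more transparent, since it treats the $\M$- and $\calM$-terms by one and the same mechanism. One genuine slip: the identity $\E[\bracket{\M^{n,k},\M^{n,k}}_\infty\mid\calF_{\tau_{n,k}}]=\p{Q^{T-\tau_{n,k}}(\ph)(X^n_{\tau_{n,k}})}^2$ is wrong, because the martingale is frozen at $-Q^{T-\tau_{n,k}}(\ph)(X^n_{\tau_{n,k}})$ only on the event that the particle is absorbed before $T$; the correct value is the conditional variance $Q^{T-\tau_{n,k}}(\ph^2)(X^n_{\tau_{n,k}})-\p{Q^{T-\tau_{n,k}}(\ph)(X^n_{\tau_{n,k}})}^2$, which is still bounded by $\norm{\ph}_\infty^2$, so your estimate survives. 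Finally, you are right to single out the orthogonality of the elementary martingales under (SK) alone as the delicate point; the paper's own proof invokes~\eqref{eq:quad} without revisiting the role of (CC) there either, and the conditional-independence argument you sketch is precisely the one used in part (ii) of the orthogonality lemma.
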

\begin{proof} We consider the decomposition
\begin{align*}
\gamma^N_T(\ph) - \gamma_T(\ph)  &= \int_0^T \d \b { \gamma^N_t \p{Q^{T-t}(\ph)} } +  \gamma^N_0 Q^T(\ph) - \gamma_0 Q^T ( \ph) \\
&=\int_0^T p^N_{t^-}\ d\M_t + \int_0^T p^N_{t^-}\ d\calM_t +  \gamma^N_0 Q^T(\ph) - \gamma_0 Q^T ( \ph),
\end{align*}
together with~\eqref{eq:quad}, and we compute the different contributions to the variance of the latter.

(i)~Initial condition. Since $\gamma_0=\eta_0$ and $\gamma_0^N=\eta_0^N$, we have
$$\E\b{ \p{\gamma^N_0 Q^T(\ph) - \gamma_0 Q^T ( \ph)}^2  }=\frac{1}{N}\V_{\eta_0}(Q^T(\ph)(X)),$$ 
and it is readily seen that
$$\E\b{ \p{\gamma^N_0 Q^T(\ph) - \gamma_0 Q^T ( \ph)}^2  } \leq \frac{\norm{\ph}_\infty^2}{N}.$$
(ii)~$\calM$-terms. Following \cite{v14}, we have
\begin{align*}
\E\left[\left(\int_0^T p_{t-}^N d\calM_t\right)^2\right]&=\E\left[ \sum_{j=1}^{B_T} \p{1-\tfrac{1}{N}}^{2j-2} (\Delta\calM_{\tau_{j}})^2\right]\\
&\leq\sum_{j=1}^{\infty} \p{1-\tfrac{1}{N}}^{2j-2} \E\left[ (\Delta\calM_{\tau_{j}})^2\right].
\end{align*}
For any $j\geq1$, there exists a unique couple $(n,k)$  with $n \in \set{1, \ldots , N}$ and $k\geq 1$ such that $\Delta\calM_{\tau_{j}}=\tfrac{1}{N}\Delta\calM^{n,k}_{\tau_{n,k}}$.
Then, by (\ref{aicj}) and by construction of the particle system, we have
\begin{align*}
&\E\left[ (\Delta\calM^{n,k}_{\tau_{n,k}})^2\right]\\
&\quad=\left(1-\frac{1}{N}\right)^2\ \E\left[\p{Q^{T-\tau_{n,k}}(\ph)(X_{\tau_{n,k}}^n)- \frac{1}{N-1} \sum_{m \neq n} Q^{T-\tau_{n,k}}(\ph)(X_{\tau_{n,k}}^m)}^2\right]\\
&\quad=\left(1-\frac{1}{N}\right)^2\ \E\left[\p{Q^{T-\tau_{n,k}}(\ph)(X_{\tau_{n,k}}^n)-\E\left[\left.Q^{T-\tau_{n,k}}(\ph)(X_{\tau_{n,k}}^n)\right|\calF_{\tau_{n,k}^-}\right]}^2\right]
\end{align*}
Again, it becomes clear that
$$\E\left[ (\Delta\calM^{n,k}_{\tau_{n,k}})^2\right]\leq(1-\tfrac{1}{N})^2\|\varphi\|^2_\infty,$$
so that 
\begin{align*}
\E\left[\left(\int_0^T p_{t-}^N d\calM_t\right)^2\right]&\leq \frac{\|\varphi\|^2_\infty}{N^2} \sum_{j=1}^\infty \p{1-\tfrac{1}{N}}^{2j}\\
&\leq \frac{\|\varphi\|^2_\infty}{N},
\end{align*}
the last inequality coming from the fact that $(1-x)^2/(1-(1-x)^2)\leq 1/x$ for any $x\in(0,1]$.\medskip

(iii)~$\M$-terms. By definition and orthogonality of the martingales $t\mapsto \M^{n}_t$, $1\leq n\leq N$, we have
$$\E\left[\left(\int_0^T p_{t-}^N d\M_t\right)^2\right]\leq\E\left[\left(\int_0^T d\M_t\right)^2 \right]=\E\left[\left(\M_T\right)^2 \right]=\frac{1}{N^2}\sum_{n=1}^N\E\left[\left(\M_T^n\right)^2 \right].$$
Next, keeping in mind that $k\leq B^n_t$ iff $t\geq \tau_{n,k}$, $k\leq B^n_t-1$ iff $t\geq \tau_{n,k-1}$, we are led to
\begin{align*}
\E\left[\p{\M^n_T}^2\right] &= \E\left[\sum_{k=0}^{B^n_T} \p{\M^{n,k}_T}^2\right]\\
  &= \E\left[  \sum_{k=0}^{B^n_T-1} \p{Q^{T-\tau_{n,k}}(\ph)\p{X^n_{\tau_{n,k}}}}^2 \right] \\
     &\quad + \E\left[ \p{\ph(X^n_{T}) -  Q^{T-\tau_{n,B^n_T}}(\ph)\p{X^n_{\tau_{n,B^n_T}}} }^2 \right] \\    
  &\leq  \norm{\ph}_\infty^2 \E\left[  \sum_{k=0}^{B^n_T-1} Q^{T-\tau_{n,k}}(\un_F)\p{X^n_{\tau_{n,k}}} \right] + 4 \norm{\ph}_\infty^2,
\end{align*}
and applying~\eqref{eq:Mbb} with the test function $\un_F$ gives
$$ \E \left[\p{\M^n_T}^2\right] \leq 5 \norm{\ph}_\infty^2 .$$
\end{proof}

In particular, $\gamma^N_T(\ph)$ converges in probability towards $\gamma_T(\ph)$ for any bounded $\ph$ when $N$ goes to infinity.

\subsection{Predictable quadratic variation}

Under Assumption~(CC), it is possible to compute the predictable quadratic variation of the martingale of interest. 

\begin{Lem}\label{cj} Recall that $M_t\eqdef    \gamma_t^N(Q^{T-t}(\ph))$ where $t \in [0,T]$.
If Assumptions~(SK) holds true and if $\ph$ satisfies Assumption~(CC), we have
 \begin{align*}
  N \d \bracket{M,M}_t =&
  \p{p^N_{t}}^2 \eta^{N}_{t} \p{ \Gamma_{T-t}(\ph)} \d t \\
  & + \p{p^N_{t}}^2  \left(\frac{1}{N}\sum_{n=1}^N  \lambda(X_t^n)\V_{\tfrac{1}{N-1}\sum_{m\neq n}\delta_{X_t^m}}\p{ Q^{T-t}(\ph) }\right) \d t.  
 \end{align*}

\end{Lem}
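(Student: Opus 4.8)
The plan is to start from the orthogonal decomposition \eqref{eq:quad} of the preceding lemma, which already expresses $\d\bracket{M,M}_t$ in terms of the individual increments $\d\bracket{\M^{n,k},\M^{n,k}}_t$ and $\d\bracket{\calM^{n,k},\calM^{n,k}}_t$. It therefore suffices to identify these two families separately: the first encodes the free Markovian evolution of each particle between branchings, the second the branching (rebirth) rule.

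For the $\M$-contributions, note from \eqref{leicj} that on each interval $[\tau_{n,k},\tau_{n,k+1})$ the martingale $t\mapsto\M^{n,k}_t$ equals $t\mapsto Q^{T-t}(\ph)(X^n_t)$ minus the $\calF_{\tau_{n,k}}$-measurable constant $Q^{T-\tau_{n,k}}(\ph)(X^n_{\tau_{n,k}})$. Subtracting a constant leaves the quadratic variation unchanged, so Assumption~(CC) applied to particle $n$ restarted at $\tau_{n,k}$ gives $\d\bracket{\M^{n,k},\M^{n,k}}_t=\Gamma_{T-t}(\ph)(X^n_t)\,\d t$ there. The death-jump of $Q^{T-t}(\ph)(X^n_t)$ at $\tau_{n,k+1}$ is automatically compensated inside $\Gamma$ (this is the $\lambda\,\ph^2$ type term), so summing over $k$ yields $\d\bracket{\M^n,\M^n}_t=\Gamma_{T-t}(\ph)(X^n_t)\,\d t$ for all $t$. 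Inserting this in \eqref{eq:quad} and multiplying by $N$ produces the first announced term $\p{p^N_t}^2\,\eta^N_t(\Gamma_{T-t}(\ph))\,\d t$, where $p^N_{t-}$ may be replaced by $p^N_t$ since the two differ only on the Lebesgue-null set of branching times.

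The $\calM$-contributions are the core of the argument. Writing $\mu^n_t\eqdef\tfrac1{N-1}\sum_{m\neq n}\delta_{X_t^m}$ for the empirical measure of the particles other than $n$, one sees from \eqref{aicj} that $t\mapsto\calM^{n,k}_t$ is a pure-jump martingale with a single jump at $\tau_{n,k}$, equal to $(1-\tfrac1N)$ times the $\mu^n_{\tau_{n,k}}$-centered variable $Q^{T-\tau_{n,k}}(\ph)(X^n_{\tau_{n,k}})-\mu^n_{\tau_{n,k}}(Q^{T-\tau_{n,k}}(\ph))$. Using the zero-mean identity of the branching rule established above, its conditional second moment given $\calF_{\tau_{n,k}^-}$ equals $(1-\tfrac1N)^2\,\V_{\mu^n_{\tau_{n,k}}}(Q^{T-\tau_{n,k}}(\ph))$. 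Since the successive killings-and-rebirths of particle $n$ form a counting process of intensity $\lambda(X^n_t)$ by Assumption~(SK), compensating the single-jump process $[\calM^{n,k},\calM^{n,k}]$ amounts to multiplying this conditional jump-variance by $\lambda(X^n_t)\,\d t$; summing over $k$ gives $\d\bracket{\calM^n,\calM^n}_t=(1-\tfrac1N)^2\lambda(X^n_t)\,\V_{\mu^n_t}(Q^{T-t}(\ph))\,\d t$. Inserting this in \eqref{eq:quad} and multiplying by $N$ delivers the second announced term.

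The step I expect to be most delicate is the bookkeeping of the $(1-\tfrac1N)$ factors in this last compensation. The branching jump of $M$ itself reads $\tfrac1N p^N_{t-}\,\Delta\calM^n_t=\tfrac1N p^N_t\,\p{Q^{T-t}(\ph)(X^n_t)-\mu^n_t(Q^{T-t}(\ph))}$, since $p^N_t=(1-\tfrac1N)p^N_{t-}$ is the post-branching value and $\Delta\calM^n$ already carries one factor $(1-\tfrac1N)$; one must therefore decide carefully whether the pre- or post-branching value of $p^N$, and which power of $(1-\tfrac1N)$, enters the predictable quadratic variation. A faithful computation leaves a residual factor $(1-\tfrac1N)^2$ on the branching term relative to the stated $\p{p^N_t}^2$ prefactor; this factor tends to $1$ as $N\to\infty$ and is immaterial for the subsequent central limit theorem. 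A secondary point to handle is that the death-jump of the Markovian martingale, compensated inside $\Gamma$, and the rebirth-jump carried by $\calM^n$ occur at the same instant yet are orthogonal — which is exactly what allows \eqref{eq:quad} to split the two contributions without cross terms.
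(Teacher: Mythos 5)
Your proposal is correct and follows essentially the same route as the paper: it starts from the orthogonal decomposition \eqref{eq:quad}, identifies $\d\bracket{\M^{n,k},\M^{n,k}}_t$ directly from Assumption~(CC) applied between branching times, and compensates the single-jump martingales $\calM^{n,k}$ via the conditional jump variance $(1-\tfrac1N)^2\,\V_{\frac{1}{N-1}\sum_{m\neq n}\delta_{X^m_t}}(Q^{T-t}(\ph))$ together with the branching intensity $\lambda(X^n_t)$ supplied by Assumption~(SK), exactly as in the paper's proof. Your remark about the residual $(1-\tfrac1N)^2$ factor on the branching term is accurate: the paper's own computation produces it and silently drops it from the stated formula, which is harmless since it tends to $1$ and does not affect the limit taken in Lemma~\ref{almchi}.
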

\begin{proof}
Let us recall Equation~(\ref{eq:quad}) :
$$\d \bracket{ M, M}_t = \frac{( p^N_{t-}  )^2}{N^2}\sum_{n=1}^N \sum_{k=0}^{B^n_t} \p{   \d  \bracket{ \M^{n,k},\M^{n,k} }_t +\d \bracket{ \calM^{n,k},\calM^{n,k} }_t }.$$
 (i)~$\M$-terms. By Assumption~(CC) and through a direct application of Doob's optional sampling theorem, 
  \[
  t \mapsto \p{\M^{n,k}_t}^2 - \int_{\tau_{n,k}}^{t \wedge \tau_{n,k+1}} \Gamma_{T-s}(\ph)\p{X^{n}_s} \d s
 \]
is a bounded martingale. Summing over $k$ and over $n$ yields the first term.\medskip

(ii)~$\calM$-terms. Note that the martingale $\calM^{n,k}$ is piecewise constant with a single jump at $\tau_{n,k}$. Hence the application of Lemma \ref{albcios} with $\tau=\tau_{n,k}$ and
$$U=\p{\calM^{n,k}_{\tau_{n,k}}}^2 -  \E\left[\left.  \p{\calM^{n,k}_{\tau_{n,k}} }^2  \right| \calF_{\tau_{n,k}^-} \right]$$
ensures that  
\[
U \un_{t \geq \tau_{n,k}}=\p{\calM^{n,k}_t}^2 -  \E\left[\left.  \p{\calM^{n,k}_{\tau_{n,k}} }^2  \right| \calF_{\tau_{n,k}^-} \right]\un_{t \geq \tau_{n,k}}
\]
defines a bounded c\`adl\`ag martingale. Moreover, by construction of the branching rule, we have
$$   \E\left[\left.\p{\calM^{n,k}_{\tau_{n,k}}}^2 \right| \calF_{\tau_{n,k}^-} \right] = V^n_{\tau_{n,k}^-},$$
where 
$$V^n_{t}\ \eqdef\   
 \p{1-\tfrac{1}{N}}^2\V_{\tfrac{1}{N-1}\sum_{m\neq n}\delta_{X^m_{t}}} \p{Q^{T-t}(\ph)}.
$$
Therefore, 
$$t\mapsto \p{\calM^{n,k}_t}^2-V^n_{\tau_{n,k}^-}\un_{t \geq \tau_{n,k}}$$
is a bounded martingale. Moreover, Assumption (SK) ensures that
$$t\mapsto\widetilde{\calM}_t^{n,k}\ \eqdef\   \un_{t \geq \tau_{n,k}}-\int_{\tau_{n,k-1}}^{t\wedge\tau_{n,k}}\lambda(X_s^n) \d s$$
is a bounded martingale, and so is
$$t\mapsto \p{\calM^{n,k}_t}^2-\int_0^tV^n_{s^-}d\widetilde{\calM}_s^{n,k}-\int_{\tau_{n,k-1}}^{t\wedge\tau_{n,k}}V^n_{s}\lambda(X_s^n) \d s.$$
As a consequence,
$$\bracket{\calM^{n,k},\calM^{n,k}}_t = \int_{\tau_{n,k-1}}^{t\wedge\tau_{n,k}}V^n_{t} \lambda\p{X^n_s} \d s,$$
and we just have to sum over $k$ to conclude.
\end{proof}



\subsection{The asymptotic variance and its simplification}
We first calculate the predictable quadratic variation in the many particles limit.

\begin{Lem}\label{almchi}
Under Assumption~(SK), if $\ph$ satisfies Assumption~(CC), then for any $ t \leq T$
\begin{align*}
 N\bracket{M,M}_t \xrightarrow[{N \to\infty}]{\L^1(\P)} \int_0^{t} \b{ \eta_{s} \p{\Gamma_{T-s}(\ph)} + \V_{\eta_{s}}(Q^{T-s}(\ph))     \eta_s(\lambda) }   p_{s}^2 \d s.
\end{align*}
\end{Lem}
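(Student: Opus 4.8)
The starting point is the closed form of the predictable quadratic variation obtained in Lemma~\ref{cj}. Integrating it in time gives
$$
N \bracket{M,M}_t = \int_0^t (p_s^N)^2 \eta_s^N(\Gamma_{T-s}(\ph)) \d s + \int_0^t (p_s^N)^2 \left(\frac{1}{N}\sum_{n=1}^N \lambda(X_s^n)\, \V_{\frac{1}{N-1}\sum_{m\neq n}\delta_{X_s^m}}(Q^{T-s}(\ph))\right)\d s,
$$
and the plan is to pass to the limit in each of the two integrals separately, in $\L^1(\P)$. Two ingredients will be used throughout. First, the $\L^2$ estimate of Proposition~\ref{pro:estimate}, which yields $\gamma_s^N(\psi)\to\gamma_s(\psi)$ in $\L^2(\P)$ for every bounded $\psi$ (in particular $p_s^N=\gamma_s^N(\un_F)\to p_s$). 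Second, the lower bound $p_s\geq e^{-\norm{\lambda}_\infty T}>0$ on $[0,T]$, which follows from $p_s'=-\gamma_s(\lambda)\geq -\norm{\lambda}_\infty p_s$; this strict positivity is what lets us invert the normalisation $\eta_s^N(\psi)=\gamma_s^N(\psi)/p_s^N$ on an event of probability tending to one.

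For the first ($\Gamma$-)term I would rewrite the integrand as $p_s^N\,\gamma_s^N(\Gamma_{T-s}(\ph))$. For each fixed $s$ the function $\Gamma_{T-s}(\ph)$ is bounded, so both factors are bounded and converge in $\L^2(\P)$, to $p_s$ and $\gamma_s(\Gamma_{T-s}(\ph))$ respectively; hence their product converges to $p_s^2\,\eta_s(\Gamma_{T-s}(\ph))$ in $\L^1(\P)$. Since both the $N$-th integrand and its limit are bounded (uniformly in $N$ and $\omega$) by $\norm{\Gamma_{T-s}(\ph)}_\infty$, and $s\mapsto\norm{\Gamma_{T-s}(\ph)}_\infty$ is integrable on $[0,T]$ by Assumption~(CC), dominated convergence applied to $\int_0^t \E\,|\,\cdot\,|\d s$ gives the convergence of the first integral towards $\int_0^t p_s^2\,\eta_s(\Gamma_{T-s}(\ph))\d s$.

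The second (variance) term is where the real work lies. Writing $\psi_s=Q^{T-s}(\ph)$, which is bounded by $\norm{\ph}_\infty$, I would first replace the leave-one-out empirical variance $\V_{\frac{1}{N-1}\sum_{m\neq n}\delta_{X_s^m}}(\psi_s)$ by the full empirical variance $\V_{\eta_s^N}(\psi_s)$: removing a single particle from an $N$-sample perturbs the empirical mean and the empirical second moment by $\Bigo(1/N)$, so this substitution costs an error bounded by $C\norm{\ph}_\infty^2/N$ uniformly in $s$, $\omega$ and $n$, and after averaging against the bounded weights $\lambda(X_s^n)/N$ the total error is still $\Bigo(1/N)$. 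This reduces the integrand to $(p_s^N)^2\,\eta_s^N(\lambda)\,\V_{\eta_s^N}(\psi_s)$. For each fixed $s$ the quantities $p_s^N$, $\eta_s^N(\lambda)$, $\eta_s^N(\psi_s)$ and $\eta_s^N(\psi_s^2)$ converge in probability (using $p_s>0$ to control the normalisation) to $p_s$, $\eta_s(\lambda)$, $\eta_s(\psi_s)$ and $\eta_s(\psi_s^2)$, so that $\V_{\eta_s^N}(\psi_s)=\eta_s^N(\psi_s^2)-\eta_s^N(\psi_s)^2\to\V_{\eta_s}(\psi_s)$ in probability; being uniformly bounded by $\norm{\lambda}_\infty\norm{\ph}_\infty^2$, the whole integrand converges in $\L^1(\P)$ to $p_s^2\,\eta_s(\lambda)\,\V_{\eta_s}(\psi_s)$. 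A final application of dominated convergence on the finite interval $[0,t]$, the integrand being bounded by a constant, yields the convergence of the second integral and hence the lemma.

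The main obstacle is this variance term: one must quantify cleanly the $\Bigo(1/N)$ cost of passing from the leave-one-out measures to the full empirical measure, and handle the normalisation $\eta_s^N=\gamma_s^N/p_s^N$ — only controllable thanks to the strictly positive lower bound on $p_s$ — while keeping every estimate uniform enough in $s$ to justify exchanging the limit with the time integral.
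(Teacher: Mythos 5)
Your proposal is correct and follows essentially the same route as the paper: integrate the expression from Lemma~\ref{cj}, prove $\L^1(\P)$ convergence of the two integrands at each fixed $s$ via Proposition~\ref{pro:estimate} and boundedness, and conclude by dominated convergence in time (the paper handles the leave-one-out variance by expanding it exactly as a continuous function of empirical averages rather than via your explicit $\Bigo(1/N)$ comparison with $\V_{\eta_s^N}$, but this is the same computation). Your explicit lower bound $p_s \geq e^{-\norm{\lambda}_\infty T}$ justifying the normalisation $\eta_s^N = \gamma_s^N/p_s^N$ is a point the paper leaves implicit.
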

\begin{proof}
 According to assumptions, all the quantities at stake are uniformly bounded by
 $$ \int_0^t \norm{\Gamma_{T-s}(\ph)}_\infty \d s+ \norm{\ph}^2_\infty\norm{\lambda}_\infty t< \infty.$$ 
Therefore, by dominated convergence, it is sufficient to prove that for any $s\in [0,t]$,
 $$A_N\eqdef   \p{p_{s}^N}^2\  \eta^N_{s} \p{\Gamma_{T-s}(\ph)}$$
 and 
 \begin{align*}
 B_N\eqdef  & \p{p^N_{s}}^2  \left(\frac{1}{N}\sum_{n=1}^N  \lambda(X_s^n)\V_{\tfrac{1}{N-1}\sum_{m\neq n}\delta_{X^m}}\p{ Q^{T-s}(\ph) }\right)\\
 =&\p{p^N_{s}}^2 \eta^{N}_{s} \p{  \lambda  \V_{\tfrac{N}{N-1}(\eta^{N}_{s} - \tfrac{1}{N} \delta_{.})} \p{ Q^{T-s}(\ph) } }
\end{align*}
 both converge in $\L^1(\P)$, or equivalently in distribution, towards their deterministic limits.\medskip
 
Since $\eta^N_s = \gamma^N_s /\gamma^N_s(\un_F)$, Proposition \ref{pro:estimate} ensures that for any bounded function $\psi$, one has 
$$\eta_s^N(\psi)\xrightarrow[N\to\infty]{\P}\eta_s(\psi).$$
In particular, since $p^N_{s}=\gamma^N_s(\un_F)$, it is clear that
$$A_N\xrightarrow[N\to\infty]{\P}p_{s}^2\  \eta_{s} \p{\Gamma_{T-s}(\ph)}.$$
The term $B_N$ admits the alternative formulation
\begin{align*}
B_N=&\p{p^N_{s}}^2\ \eta_s^N\p{\lambda\times \left(\tfrac{N}{N-1}\eta^{N}_{s}(Q^{T-s}(\ph)^2)-\tfrac{1}{N-1}Q^{T-s}(\ph)^2\right)}\\
&-\p{p^N_{s}}^2\ \eta_s^N\p{\lambda\times \left(\tfrac{N}{N-1}\eta^{N}_{s}(Q^{T-s}(\ph))-\tfrac{1}{N-1}Q^{T-s}(\ph)\right)^2}.
\end{align*}
By expanding again the latter, it turns out that $B_N$ might be expressed as a continuous functions of $\eta_s^N(\lambda)$, $\eta^{N}_{s}(Q^{T-s}(\ph))$, etc. Finally, since convergence in probability is stable by continuous mapping, Proposition~\ref{pro:estimate} allows us to conclude that
$$B_N\xrightarrow[N\to\infty]{\P}p_{s}^2\ \V_{\eta_{s}}(Q^{T-s}(\ph))     \eta_s(\lambda),$$
and the proof is complete.
\end{proof}

The variance formula will then be denoted as follows.
\begin{Def} Assume~(SK) and suppose that $\ph$ satisfies~(CC). Then, for any $t \geq 0$, we define the asymptotic variance of $M_t$ by
\begin{align}\nonumber
\sigma_t^2(\ph) \eqdef& \V_{\eta_0}(Q^{T-t}(\ph))\\ 
&+  \int_0^{t} \b{ \eta_{s} \p{\Gamma_{T-s}(\ph)}
+ \V_{\eta_{s}}(Q^{T-s}(\ph))     \eta_s(\lambda) }   p_{s}^2 \d s.\label{eq:var_1}
 \end{align}
\end{Def}

One can then simplify the variance at final time $T$ as follows. 
\begin{Lem}
Assume~(SK) and suppose that $\ph$ satisfies~(CC). Then, for $t=T$, the variance defined by~\eqref{eq:var_1} satisfies
\begin{equation}\label{eq:var_2}
\sigma_T^2(\ph) = p^2_T \V_{\eta_T}(\ph) - p_T^2\ln(p_T) \, \eta_T(\ph)^2 - 2\int_0^T \V_{\eta_{t}}(Q^{T-t}(\ph)) p_t p'_t \d t.
\end{equation}
\end{Lem}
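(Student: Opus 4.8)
The plan is to set $t=T$ in~\eqref{eq:var_1}, where the first term contributes the constant $\V_{\eta_0}(Q^{T}(\ph))$, and to reduce the two integrals to the three terms of~\eqref{eq:var_2} using three ingredients: the (SK) relation $p'_s=-p_s\,\eta_s(\lambda)$; the semi-group/martingale identity $\gamma_s(Q^{T-s}(\ph))=\gamma_T(\ph)$, which gives $\eta_s(Q^{T-s}(\ph))=p_T\eta_T(\ph)/p_s$; and a ``carré du champ'' identity extracted from Assumption~(CC). The genuinely load-bearing input is the last one, so I would establish it first.

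Taking expectations in the (CC) martingale, and using that $\p{Q^{T-s}(\ph)}^2(\partial)=0$ so that $\E\b{\p{Q^{T-s}(\ph)}^2(X_s)}=\gamma_s\p{(Q^{T-s}(\ph))^2}$ and $\E\b{\Gamma_{T-s}(\ph)(X_s)\un_{s<\tau_\partial}}=p_s\,\eta_s(\Gamma_{T-s}(\ph))$, yields
$$\gamma_t\p{(Q^{T-t}(\ph))^2}=\gamma_0\p{(Q^{T}(\ph))^2}+\int_0^t p_s\,\eta_s(\Gamma_{T-s}(\ph))\d s,$$
i.e.\ $\tfrac{\d}{\d s}\gamma_s\p{(Q^{T-s}(\ph))^2}=p_s\,\eta_s(\Gamma_{T-s}(\ph))$. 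Writing $G_s\eqdef\gamma_s\p{(Q^{T-s}(\ph))^2}=p_s\,\eta_s\p{(Q^{T-s}(\ph))^2}$, the carré du champ integrand becomes $\eta_s(\Gamma_{T-s}(\ph))\,p_s^2=p_s\,G'_s$.

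Then I would treat the two integrals separately. The (SK) relation turns the $\lambda$-integral into $-\int_0^T \V_{\eta_s}(Q^{T-s}(\ph))\,p_s p'_s\d s$. For the carré du champ integral, an integration by parts gives $\int_0^T p_s G'_s\d s=p_T G_T-G_0-\int_0^T p'_s G_s\d s$, with $p_T G_T=p_T^2\eta_T(\ph^2)$ (since $G_T=\gamma_T(\ph^2)$, $p_0=1$) and $G_0=\eta_0\p{(Q^{T}(\ph))^2}$. The point is that $-G_0$ cancels against the constant term, because $\V_{\eta_0}(Q^T(\ph))=G_0-\gamma_T(\ph)^2=G_0-p_T^2\eta_T(\ph)^2$, so these pieces collapse to $p_T^2\eta_T(\ph^2)-p_T^2\eta_T(\ph)^2=p_T^2\V_{\eta_T}(\ph)$, the first term of~\eqref{eq:var_2}. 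It then remains to expand $\int_0^T p'_s G_s\d s$: splitting $G_s=p_s\V_{\eta_s}(Q^{T-s}(\ph))+p_T^2\eta_T(\ph)^2/p_s$ via the identity $\eta_s(Q^{T-s}(\ph))=p_T\eta_T(\ph)/p_s$, and using $\int_0^T (p'_s/p_s)\d s=\ln p_T$, gives $\int_0^T p'_s G_s\d s=\int_0^T \V_{\eta_s}(Q^{T-s}(\ph))\,p_s p'_s\d s+p_T^2\ln(p_T)\,\eta_T(\ph)^2$.

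Collecting everything, the two contributions $-\int_0^T \V_{\eta_s}(Q^{T-s}(\ph))\,p_s p'_s\d s$ (one from the (SK) integral, one from the expansion of $\int p'_s G_s\,\d s$) add up to the factor $2$ in~\eqref{eq:var_2}, the term $-p_T^2\ln(p_T)\eta_T(\ph)^2$ appears, and the constant/boundary pieces give $p_T^2\V_{\eta_T}(\ph)$, which is exactly~\eqref{eq:var_2}. The computation is essentially bookkeeping once the (CC) identity is in hand; the only steps requiring care are the cancellation of $G_0$ against $\V_{\eta_0}(Q^T(\ph))$ and the emergence of the logarithm through $\int_0^T (p'_s/p_s)\d s=\ln p_T$, both of which hinge on rewriting $G_s$ through its mean/variance decomposition and on $p_0=1$.
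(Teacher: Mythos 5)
Your proof is correct and follows essentially the same route as the paper's: take expectations in the (CC) martingale to get $\tfrac{\d}{\d s}\gamma_s\p{(Q^{T-s}\ph)^2}=\gamma_s(\Gamma_{T-s}(\ph))$, integrate by parts against $p_s$, split $\gamma_s\p{(Q^{T-s}\ph)^2}$ into variance plus squared mean using $\gamma_s(Q^{T-s}\ph)=\gamma_T(\ph)$, and convert the $\lambda$-term via $p'_s=-\gamma_s(\lambda)$. The bookkeeping, including the cancellation of the initial-condition term and the appearance of $\ln p_T$ from $\int_0^T p'_s/p_s\,\d s$, matches the paper's computation exactly.
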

\begin{proof}
Since $\ph(\partial)=0$, by definition of $\Gamma$ and using Assumption~(CC) we have 
$$\E\left[\p{  Q^{T-t}(\ph)(X_t) }^2\un_{t\leq \tau_\partial} \right]=\E\left[\int_0^{t} \Gamma_{T-s}(\ph)(X_s)\un_{s\leq \tau_\partial} \d s\right],$$
which amounts to say that
$$\gamma_t\p{ \p{Q^{T-t}(\ph)}^2 }=\int_0^{t} \gamma_s(\Gamma_{T-s}(\ph)) \d s,$$
and we are led to
\[
\frac{\d }{\d t} \gamma_t\p{ \p{Q^{T-t}(\ph)}^2 } = \gamma_t \p{\Gamma_{T-t}(\ph)}.
\]
In the same manner, by Assumption~(SK), we also have
$$p_t=\gamma_t(\un_F)=\P(X_t\neq\partial)=1-\E\left[\int_0^{t} \lambda(X_s)\un_{s\leq \tau_\partial} \d s\right],$$
hence
$$p'_t\ \eqdef\  \frac{\d}{\d t} p_t =\frac{\d}{\d t}\gamma_t(\un_F)=-\E\left[\lambda(X_t)\un_{t\leq \tau_\partial}\right]=-\gamma_t(\lambda).$$
We have now the tools to simplify~(\ref{eq:var_1}). The factor $\eta_s(\lambda)=\gamma_s(\lambda)/p_s$ can be replaced by $-p_s'/p_s$. 
For the other term inside the integral, recall that for any bounded function $\ph$ and any $t$, we have $\gamma_0(Q^T\ph)=\gamma_t(Q^{T-t}\ph)=\gamma_T(\ph)$, and
$$\gamma_{t}(\ph^2) = p_t \V_{\eta_t}(\ph) + \frac{\gamma_t(\ph)^2}{p_t},$$
so that
\begin{align*}
\int_0^T  &\eta_t \p{\Gamma_{T-t}(\ph)}p_t^2 \ dt \\
&=\int_0^T p_t \frac{\d }{\d t} \gamma_t\p{ \p{Q^{T-t}(\ph)}^2 } \ dt  \\
&= p_T^2\eta_T(\varphi^2) -  \eta_0(\varphi^2)-  \int_0^T p_t' \left(p_t \V_{\eta_t}(Q^{T-t}\ph) + \frac{\gamma_t(Q^{T-t}\ph)^2}{p_t}\right)  \d t \\
& =p^2_T \V_{\eta_T}(\ph) - \V_{\eta_0}(\ph) - \ln(p_t)  \, \gamma_T(\ph)^2 - \int_0^T \V_{\eta_{t}}(Q^{T-t}(\ph))p_t  p'_t \d t.
\end{align*}
Finally we get
\begin{equation*}
 \sigma_T^2(\ph) = p^2_T \V_{\eta_T}(\ph) - p_T^2\ln(p_T) \, \eta_T(\ph)^2 - 2\int_0^T \V_{\eta_{t}}(Q^{T-t}(\ph)) p_t p'_t  \d t.
\end{equation*}
\end{proof}

\subsection{Martingale Central Limit Theorem}
The following result is an adaptation of Theorem $1.4$ page 339 in~\cite{ek86} to our specific context. The main difference is about the initial condition.

\begin{The}\label{aeichj}
Let $t \mapsto Z_t^N$ denote a sequence of c\`adl\`ag processes indexed by $N \geq 1$, which may be defined on different probability spaces. Suppose that $t \mapsto Z_t^N-Z^N_0$ are local square integrable martingales, and assume moreover that
\begin{enumerate}[(a)]
\item $Z_0^N \xrightarrow[N \to + \infty]{\calD} \mu_0$, where $\mu_0$ is a given probability on $\R$,
\item the following limit holds
\[
\lim_{N \to +\infty} \E\left[\sup_{t \in [0,T]}\abs{\Delta Z^N_t }^2\right] =  0,
 \]
\item for each $N$, the predictable quadratic variation $\bracket{Z^N,Z^N}_t$ is continuous,
\item there exists a continuous and increasing deterministic function $t \mapsto v(t)$ such that, for all $t\in[0,T]$,
 \[
  \bracket{Z^N,Z^N}_t \xrightarrow[N \to + \infty]{\calD} v(t).
 \]
 \end{enumerate}
Then $(Z_t^N)_{t \in [0,T]}$ converges in law (under the Skorokhod topology) towards $(Z_t)_{t \in [0,T]}$, where  $Z_0 \sim \mu_0$ and  $(Z_t-Z_0)_{t \in [0,T]}$ is a Gaussian process, independent of $Z_0$, with independent increments and variance function $v(t)$.  
\end{The}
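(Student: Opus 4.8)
The plan is to split $Z^N_t = Z^N_0 + \tilde Z^N_t$, where $\tilde Z^N_t \eqdef Z^N_t - Z^N_0$ is, by hypothesis, a local square integrable martingale with $\tilde Z^N_0 = 0$. Since $Z^N_0$ is $\calF_0$-measurable and constant in $t$, one has $\Delta Z^N_t = \Delta \tilde Z^N_t$ and $\bracket{Z^N,Z^N} = \bracket{\tilde Z^N, \tilde Z^N}$, so assumptions (b), (c) and (d) transfer verbatim to $\tilde Z^N$. The strategy is then to treat the centered martingale part with the classical functional martingale CLT, to treat the initial value with assumption (a), and finally to glue the two together through an asymptotic independence argument.

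First I would apply the classical martingale central limit theorem (Theorem~$1.4$, Chapter~$7$ of~\cite{ek86}) to the sequence $\tilde Z^N$. Assumption (b), which controls the largest jump in $\L^2$ and hence a fortiori in $\L^1$, guarantees the asymptotic continuity of the limit; assumptions (c) and (d) provide a continuous predictable quadratic variation converging to the deterministic, continuous, increasing function $v$. The conclusion is that $\tilde Z^N$ converges in law, for the Skorokhod topology on $D([0,T],\R)$, towards a continuous centered Gaussian process $\tilde Z$ with independent increments and variance function $v$, i.e. $\tilde Z_t - \tilde Z_s \sim \calN(0, v(t)-v(s))$.

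It remains to upgrade this to the joint statement $(Z^N_0, \tilde Z^N) \xrightarrow[N\to\infty]{\calD} (Z_0, \tilde Z)$ with $Z_0 \sim \mu_0$ \emph{independent} of $\tilde Z$, after which the continuity of the addition map $(x,w) \mapsto x + w$ from $\R \times D([0,T],\R)$ into $D([0,T],\R)$ gives $Z^N \xrightarrow[N\to\infty]{\calD} Z_0 + \tilde Z = Z$ with the announced structure. Tightness of the pair is immediate: $Z^N_0$ is tight by (a) and $\tilde Z^N$ is tight by the previous step, hence the pair is tight in the product space. To identify the limit I would argue conditionally on $\calF_0$: because assumption (d) is convergence \emph{in distribution to a constant}, it is in fact convergence in probability, and $\bracket{\tilde Z^N,\tilde Z^N}$ still converges to the same deterministic $v$ conditionally on $\calF_0$; likewise the jumps still vanish. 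Thus the conditional law of $\tilde Z^N$ converges to that of $\tilde Z$ irrespective of the realization of the $\calF_0$-measurable variable $Z^N_0$ (equivalently, $\tilde Z^N$ converges $\calF_0$-stably to a limit whose law does not depend on $\calF_0$). At the level of finite-dimensional characteristic functions, inserting the $\calF_0$-measurable factor $\mathrm{e}^{i\theta_0 Z^N_0}$ into the proof of the classical theorem and conditioning on $\calF_0$ yields
\[
\E\Big[\mathrm{e}^{i\theta_0 Z^N_0}\,\mathrm{e}^{\,i\sum_j \theta_j (\tilde Z^N_{t_j} - \tilde Z^N_{t_{j-1}})}\Big] \;\underset{N\to\infty}{\longrightarrow}\; \widehat{\mu_0}(\theta_0)\prod_j \mathrm{e}^{-\frac12 \theta_j^2 (v(t_j)-v(t_{j-1}))},
\]
where the factorization exhibits simultaneously the Gaussian independent-increment structure of $\tilde Z$ and its independence from $Z_0$.

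The main obstacle is precisely this last decoupling step: the classical theorem is stated for a deterministic (typically null) starting point, so the only genuinely new content lies in showing that the random initial condition does not interfere with the Gaussian fluctuations. The crucial leverage is that assumption (d) forces the limiting quadratic variation to be \emph{deterministic}; had the limit of $\bracket{Z^N,Z^N}_t$ been a nontrivial random variable, the martingale part could remain correlated with $\calF_0$ and the clean product structure $Z = Z_0 + \tilde Z$ would fail.
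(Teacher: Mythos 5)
Your overall strategy coincides with the paper's: both proofs reduce the theorem to showing that the centered martingale part decouples asymptotically from the $\calF_0$-measurable initial value, the leverage being, as you correctly identify, that the limiting bracket is deterministic. The difference lies in how the decoupling is executed, and this is where your argument stops short of a proof. You propose to insert the factor $\mathrm{e}^{i\theta_0 Z_0^N}$ into the finite-dimensional characteristic functions and ``condition on $\calF_0$''; but this is exactly the step that cannot be obtained by citing Theorem~$1.4$ of~\cite{ek86} as a black box, since that theorem gives convergence of unconditional laws, not $\calF_0$-stable convergence, and the conditional measure given $\calF_0$ is a random measure to which the theorem does not directly apply. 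As written, your argument would require either re-deriving the classical proof with the extra factor carried along, or invoking a stable-convergence variant of the martingale CLT. The paper sidesteps this with a small but decisive device: for $\psi\in C_b(\R)$, tilt the probability to $\P_\psi$ with the bounded positive $\calF_0$-measurable density $e^{\psi(Z_0^N)}/\E\left[e^{\psi(Z_0^N)}\right]$. Because the density is $\calF_0$-measurable, $Z^N-Z_0^N$ remains a local martingale with the same predictable bracket under $\P_\psi$, hypotheses (b)--(d) survive the change of measure, and the classical theorem applies verbatim under $\P_\psi$, giving $\E\left[e^{\psi(Z_0^N)}F(Z^N-Z_0^N)\right]\to\mu_0(e^\psi)\,\E\left[F(Z-Z_0)\right]$ for continuous bounded functionals $F$ on the Skorokhod space; this is precisely the factorization you are after, obtained without reopening the proof of the classical theorem. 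So your proposal is correct in outline and rests on the same idea, but the step you yourself flag as ``the only genuinely new content'' is left as a sketch, and the exponential change of measure is the ingredient that turns it into a two-line verification.
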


\begin{proof}
The proof is a slight extension of Theorem $1.4$ in~\cite{ek86}, where only the special case $Z_0^N=0$ is stated. See also Section~$5$, Chapter~$7$ of~\cite{js03}, in which, again, the case of a general initial condition is left to the reader. \medskip

Let us 
fix $\psi \in C_b(\R)$, and consider the $\P$-absolutely continuous probability defined by
$$ \P_\psi = \frac{1}{\E \b{  e^{\psi(Z^N_0)} }} e^{\psi(Z^N_0)}\ \P.$$
Note that, in this notation, the dependency of $\P_\psi$ with respect to $N$ has been dropped for simplicity. We claim that, under $\P_\psi$, all the assumptions of the theorem still hold for $t \mapsto Z^N_t-Z^N_0$.\medskip

Indeed, under $\P_\psi$, since $\psi$ is bounded, the integrability property of $t \mapsto Z^N_t-Z^N_0$ and $(b)$ are not modified. Moreover, under $\P_\psi$, since $Z^N_0$ is measurable with respect to the initial $\sigma$-field, martingale properties are not modified so that $t \mapsto Z^N_t-Z^N_0$ is still a local martingale, and the predictable quadratic variation is not modified (predictability is insensitive to change of probabilities). Again, since $\psi$ is bounded, convergence in law towards a constant is unchanged, so that $(d)$ still holds. Besides, $(c)$ still holds by absolutely continuity. \medskip

As a consequence, for any continuous functional $F$ on the Skorokhod space of c\`adl\`ag  paths, Theorem $1.4$ in~\cite{ek86} ensures that
\begin{align*}
 \E \b{ e^{\psi(Z^N_0)} F(Z^N_t-Z^N_0, \, t \geq 0)} &= \E_\psi\p{ F(Z^N_t-Z^N_0, \, t \geq 0)} \E \b{e^{\psi(Z^N_0)} } \\
 & \xrightarrow[N \to +\infty]{} \E \b{ F(Z_t-Z_0 , \, t \geq 0 )  }\mu_0(e^\psi),
\end{align*}
which is precisely the desired result.
\end{proof}

\begin{Rem}
In other words, the limit Gaussian process $(Z_t)_{t \in [0,T]}$ is solution of the stochastic differential equation
$$\left\{\begin{array}{ll}
Z_0&\sim \mu_0\\
dZ_t&=\sqrt{v(t)}\ dWt
\end{array}\right.$$
where $(W_t)_{t \in [0,T]}$ is a standard Brownian motion.
\end{Rem}

\begin{Pro}\label{lazicj} For any $\ph$ satisfying Assumption~(CC), the martingale $(Z_t^N)_{0\leq t\leq T}$ defined by
 $$Z_t^N=\sqrt{N}\left(\gamma_t^N(Q^{T-t}(\ph))-\gamma_0(Q^{T-t}(\ph))\right)$$
converges in law towards a Gaussian process $(Z_t)_{t \in [0,T]}$ with independent increments, initial distribution ${\cal N}(0,\V_{\eta_0}(Q^T(\ph)))$ and variance function $\sigma_t^2(\ph)$ defined by~\eqref{eq:var_1}-\eqref{eq:var_2}.
 \end{Pro}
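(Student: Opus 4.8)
The strategy is to verify the four hypotheses of the Martingale Central Limit Theorem (Theorem \ref{aeichj}) for the rescaled process $Z_t^N = \sqrt{N}\,M_t^{(\ph)} - \sqrt{N}\,\gamma_0(Q^{T-t}(\ph))$, where $M_t = \gamma_t^N(Q^{T-t}(\ph))$ is the bounded c\`adl\`ag martingale of \eqref{eq:M_t}. First I would observe that $t \mapsto Z_t^N - Z_0^N = \sqrt{N}(M_t - M_0)$ is a (square integrable, by Lemma \ref{lem:mart}) martingale, since the deterministic shift $\sqrt{N}\,\gamma_0(Q^{T-t}(\ph))$ cancels against $\sqrt{N}M_0 = \sqrt{N}\gamma_0^N(Q^T(\ph))$ only up to the initial fluctuation, which is exactly what feeds hypothesis (a).

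For the \textbf{initial condition} (a), note that $Z_0^N = \sqrt{N}(\gamma_0^N(Q^T(\ph)) - \gamma_0(Q^T(\ph))) = \sqrt{N}(\eta_0^N(Q^T(\ph)) - \eta_0(Q^T(\ph)))$ is a normalized sum of i.i.d.\ centered terms, so the classical CLT gives $Z_0^N \xrightarrow{\calD} \calN(0, \V_{\eta_0}(Q^T(\ph)))$, matching $\mu_0$. For the \textbf{jump size} condition (b), I would bound $\sup_{t}\abs{\Delta Z_t^N}^2 = N \sup_t \abs{\Delta M_t}^2$; each jump of $M$ occurs at a branching time and, by the decomposition of Lemma \ref{lem:mart} together with \eqref{aicj}, has magnitude $O(p^N_{t-}/N) \cdot \|\ph\|_\infty$, so a single jump is $O(1/N)$ and $N\abs{\Delta M_t}^2 = O(1/N^2)$ pointwise; summing the expected squared jumps using the control $\E[B_T^2] < \infty$ from Lemma \ref{lem:finite} yields a bound of order $1/N$, which vanishes. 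The \textbf{continuity} condition (c) follows from the explicit formula for $\d\bracket{M,M}_t$ in Lemma \ref{cj}, which is an integral in $\d t$ with locally bounded integrand, hence absolutely continuous and in particular continuous.

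The \textbf{main obstacle} is hypothesis (d): establishing $\bracket{Z^N,Z^N}_t = N\bracket{M,M}_t \xrightarrow{\calD} v(t)$ with $v(t) = \sigma_t^2(\ph)$. This is precisely the content of Lemma \ref{almchi}, which gives $\L^1(\P)$-convergence (hence convergence in law) of $N\bracket{M,M}_t$ to the deterministic limit $\int_0^t [\eta_s(\Gamma_{T-s}(\ph)) + \V_{\eta_s}(Q^{T-s}(\ph))\eta_s(\lambda)]\,p_s^2\,\d s$, which is exactly the integral term in the definition \eqref{eq:var_1} of $\sigma_t^2(\ph)$; the remaining $\V_{\eta_0}(Q^{T-t}(\ph))$ piece in \eqref{eq:var_1} is carried by the initial distribution $\mu_0$ rather than the quadratic variation, and indeed $v(t) = \sigma_t^2(\ph) - \V_{\eta_0}(Q^{T}(\ph))$ plays the role of the variance of the increments while $\mu_0 = \calN(0,\V_{\eta_0}(Q^T(\ph)))$ supplies the initial variance. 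I would need to check carefully that the limiting $v(t)$ is continuous and increasing (continuity and monotonicity are clear since the integrand is nonnegative and locally bounded), so that Theorem \ref{aeichj} applies. Invoking that theorem then delivers convergence of $(Z_t^N)_{t\in[0,T]}$ to a Gaussian process with independent increments, initial law $\calN(0,\V_{\eta_0}(Q^T(\ph)))$, and variance function $\sigma_t^2(\ph)$, which is the claim.
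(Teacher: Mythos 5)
Your proposal follows essentially the same route as the paper: both verify hypotheses (a)--(d) of Theorem~\ref{aeichj} by using the classical CLT for the i.i.d.\ initial particles, the absence of simultaneous jumps (Lemmas~\ref{lem:jumps} and~\ref{jump2}) for the jump condition, Lemma~\ref{cj} for the continuity of the predictable quadratic variation, and Lemma~\ref{almchi} for its convergence to the deterministic limit. One small remark: condition (b) concerns the supremum of the squared jumps rather than their sum, so the deterministic bound $N\abs{\Delta M_t}^2 \leq C\norm{\ph}_\infty^2/N$ (valid because almost surely only one particle jumps at a time, whether by branching or by a jump of its own trajectory) already suffices, without any summation over jumps or appeal to $\E[B_T^2]<\infty$.
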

\begin{proof}
We just have to check that the assumptions of Theorem \ref{aeichj} are satisfied in our framework.
\begin{enumerate}[(a)]
\item Recall that $(X_0^1,\ldots,X_0^N)$ are i.i.d.~with law $\eta_0=\gamma_0$. Since $\ph$ is assumed bounded and $\eta_0^N=\gamma_0^N$, the central limit theorem ensures the asymptotic normality of $Z_0^N$ with asymptotic variance $\sigma_0^2=\V_{\eta_0}(Q^T(\ph))$.
\item  Since $\ph$ is bounded, this property is satisfied because the sets ${\cal D}^1,\ldots,{\cal D}^N$ of discontinuities of the particles have almost surely an empty intersection, as stated in Lemmas \ref{lem:jumps} and \ref{jump2}.
\item According to Lemma \ref{cj}, the predictable quadratic variation of the martingale is
\begin{align*}
 N \bracket{M,M}_t=&
 \V_{\eta_0^N}(\ph)+\int_0^t \p{p^N_{s}}^2 \eta^{N}_{s} \p{ \Gamma_{T-s}(\ph)} \d s \\
  +&\int_0^t  \p{p^N_{s}}^2  \left(\frac{1}{N}\sum_{n=1}^N  \lambda(X_s^n)\V_{\tfrac{1}{N-1}\sum_{m\neq n}\delta_{X_s^m}}\p{ Q^{T-s}(\ph) }\right) \d s,  
 \end{align*}
 which is clearly continuous with $t$ since $\ph$ and $\lambda$ are both bounded.
 \item This last point is a consequence of Lemma \ref{almchi}.
 \end{enumerate} 
 \end{proof}

If we marginalize on the final time, we obtain that, for any $\ph$ satisfying Assumption~(CC),
$$\sqrt{N}\left(\gamma_T^N(\ph)-\gamma_T(\ph)\right)\xrightarrow[N\to\infty]{\cal D}{\cal N}(0,\sigma_T^2(\ph)).$$
We can extend this latter result to any  function $\ph$ in the $\norm{\cdot}_\infty$-closure of the set of functions satisfying~(CC), and thus establish Theorem \ref{gamma}.

\begin{Cor}\label{corbis}
For any $\ph$ belonging to the $\norm{\cdot}_\infty$-closure of the set of functions satisfying~(CC), one has 
$$\sqrt{N}\left(\gamma_T^N(\ph)-\gamma_T(\ph)\right)\xrightarrow[N\to\infty]{\cal D}{\cal N}(0,\sigma_T^2(\ph)).$$
 \end{Cor}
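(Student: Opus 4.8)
The plan is to prove Corollary \ref{corbis} by a density argument, combining the CLT already available for functions satisfying Assumption~(CC) (the marginalization at $t=T$ of Proposition~\ref{lazicj}) with the uniform $\L^2$ control furnished by Proposition~\ref{pro:estimate}. Fix $\ph \in \overline{\cal D}$ and pick a sequence $(\ph_k)_{k \geq 1}$ in $\cal D$ with $\norm{\ph_k - \ph}_\infty \to 0$. Writing $Y_N \eqdef \sqrt{N}\p{\gamma_T^N(\ph) - \gamma_T(\ph)}$ and using the linearity of both $\gamma_T^N$ and $\gamma_T$ in the test function, I would decompose $Y_N = Y_N^k + R_N^k$, where $Y_N^k \eqdef \sqrt{N}\p{\gamma_T^N(\ph_k) - \gamma_T(\ph_k)}$ and $R_N^k \eqdef \sqrt{N}\p{\gamma_T^N(\ph - \ph_k) - \gamma_T(\ph - \ph_k)}$.

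First I would record the three ingredients needed. (a)~For each fixed $k$, since $\ph_k$ satisfies~(CC), Proposition~\ref{lazicj} marginalized at $t=T$ gives $Y_N^k \xrightarrow[N\to\infty]{\cal D} {\cal N}(0,\sigma_T^2(\ph_k))$. (b)~The perturbation is uniformly small in $\L^2$: applying Proposition~\ref{pro:estimate} to the bounded function $\ph - \ph_k$ yields $\E\b{(R_N^k)^2} \leq 7\norm{\ph - \ph_k}_\infty^2$ for every $N$; here it is important that the proof of that proposition only uses boundedness of the test function and Assumption~(SK), so it applies verbatim even though $\ph - \ph_k$ need not be continuous. (c)~The variance is $\norm{\cdot}_\infty$-continuous: on the explicit formula~\eqref{eq:var_2} one checks that $\ph \mapsto \V_{\eta_T}(\ph)$ and $\ph \mapsto \eta_T(\ph)^2$ are continuous (quadratic) functionals, that $\V_{\eta_t}(Q^{T-t}\ph)$ depends continuously on $\ph$ because $Q^{T-t}$ is a sup-norm contraction, and that $t \mapsto p_t p'_t$ is bounded on $[0,T]$, so the integral term is continuous by dominated convergence. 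As the $\ph_k$ are uniformly bounded, this gives $\sigma_T^2(\ph_k) \to \sigma_T^2(\ph)$, hence ${\cal N}(0,\sigma_T^2(\ph_k)) \xrightarrow{\cal D} {\cal N}(0,\sigma_T^2(\ph))$.

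I would then conclude with the standard approximation theorem for weak convergence (Billingsley, \emph{Convergence of Probability Measures}, Theorem~3.2): it suffices to have (a), the convergence of the Gaussian limits from (c), and the tail estimate, obtained from Markov's inequality and (b),
$$
\limsup_{N \to \infty} \P\p{\abs{Y_N - Y_N^k} > \eps} = \limsup_{N \to \infty} \P\p{\abs{R_N^k} > \eps} \leq \frac{7\norm{\ph - \ph_k}_\infty^2}{\eps^2} \xrightarrow[k \to \infty]{} 0 .
$$
These three facts together give $Y_N \xrightarrow[N\to\infty]{\cal D} {\cal N}(0,\sigma_T^2(\ph))$, which is precisely the claim.

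The only nonroutine points are items (b) and (c). For (b), the subtlety is to be sure Proposition~\ref{pro:estimate} can legitimately be invoked on the possibly discontinuous difference $\ph - \ph_k$; this is harmless since its proof rests solely on the boundedness of the integrand and on Assumption~(SK), never on continuity. For (c), the delicate term is the time integral in~\eqref{eq:var_2}, where the contraction property $\norm{Q^{T-t}\psi}_\infty \leq \norm{\psi}_\infty$ together with the integrability of $t \mapsto p_t p'_t$ on $[0,T]$ is exactly what guarantees the uniform control needed to pass $\sigma_T^2(\ph_k) \to \sigma_T^2(\ph)$. Everything else is the mechanical bookkeeping of the approximation lemma.
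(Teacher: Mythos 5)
Your proof is correct and follows essentially the same route as the paper: approximate $\ph$ by $\ph_k\in\calD$, control the remainder via the $\L^2$ bound of Proposition~\ref{pro:estimate} applied to $\ph-\ph_k$ (which, as you correctly note, only needs boundedness), invoke the CLT for $\ph_k$, and use continuity of $\sigma_T^2$ in $\norm{\cdot}_\infty$. The only cosmetic difference is that you conclude with Billingsley's approximation theorem and a Markov-inequality tail bound, whereas the paper tests against bounded Lipschitz functions and finishes with the Portmanteau theorem; these are interchangeable.
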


\begin{proof}
Let us denote by $ \calD$ the set of functions satisfying~(CC), and by $\overline{\cal D}$ its $\norm{\cdot}_\infty$-closure.
We will use  the simplified version of the asymptotic variance, namely~\eqref{eq:var_2}. 
Let us denote by $\Phi$ any bounded Lipschitz function, $G$ a centered Gaussian variable with variance $\sigma_T^2(\varphi)$ for an arbitrary  function $\varphi\in\overline{\cal D}$. \medskip

For any  $\varepsilon>0$, we can find $\varphi_\varepsilon$ in $ \calD$ such that $\|\varphi-\varphi_\varepsilon\|_\infty\leq\varepsilon$. We can also assume that $\gamma_T(\varphi_\varepsilon)=\gamma_T(\varphi)$. Note that we can also choose $\varphi_\varepsilon$ such that $|\sigma_T^2(\varphi_\varepsilon)-\sigma_T^2(\varphi)|\leq \varepsilon$. Indeed, it is easy to check by dominated convergence that $\varphi\mapsto\sigma_T^2(\varphi)$ is continuous for the norm $\|\cdot \|_\infty$. Hence, let us denote by $G_\varepsilon$ a centered Gaussian variable with variance $\sigma_T^2(\varphi_\varepsilon)$.\medskip

Then we may write
\begin{align*}
|\E[\Phi(\sqrt{N}&(\gamma_T^N(\varphi)-\gamma_T(\varphi))]-\E[\Phi(G)]|\\
\leq&\ \E[|\Phi(\sqrt{N} (\gamma_T^N(\varphi)-\gamma_T(\varphi)))-\Phi(\sqrt{N} ( \gamma_T^N(\varphi_\varepsilon)-\gamma_T(\varphi)))|]\\
&+|\E[\Phi(\sqrt{N} ( \gamma_T^N(\varphi_\varepsilon)-\gamma_T(\varphi)))]-\E[\Phi(G_\varepsilon)]|\\
&+|\E[\Phi(G_\varepsilon)]-\E[\Phi(G)]|.
\end{align*}

For the first term, by Proposition \ref{pro:estimate}, Jensen's inequality and remembering that $\gamma_T(\ph-\varphi_\varepsilon)=0$, we have
$$\E[|\Phi(\sqrt{N} (\gamma_T^N(\varphi)-\gamma_T(\varphi)))-\Phi(\sqrt{N} ( \gamma_T^N(\varphi_\varepsilon)-\gamma_T(\varphi)))|]\leq\sqrt{7}\|\Phi\|_{\rm Lip}\|\varphi-\varphi_\varepsilon\|_\infty.$$
Hence, for any given $\delta>0$, we can choose $\varepsilon$ such that this first term is less than $\delta$. Clearly, the same property holds for the third term as well. Besides, since $\ph_\varepsilon$ is in $\calD$, for $N$ large enough, the second term can also be made less than $\delta$ by Corollary \ref{lazicj}. As this result holds for any bounded Lipschitz function $\Phi$, we conclude using the Portmanteau theorem.

\end{proof}

 \begin{Rem}
  This corollary is particularly useful in practice: to obtain the CLT associated with any observable $\ph$, it is sufficient to check Assumption~(CC) for appropriately regularized functions. 
 \end{Rem}
 
\section{Technical results}\label{appendix}

\subsection{General setting and construction of the particle system}\label{sec:def_ips}

As mentioned in Section~\ref{sec:main}, we have assumed that the underlying Markov process at stake is c\`adl\`ag with Polish state space $F$. Actually, our arguments do not involve the specific topology of $F$. \medskip

A possible general setting is the following. $F$ is assumed to be \emph{standard Borel}, which means that it is associated with a $\sigma$-field which is the Borel $\sigma$-field of some \emph{unspecified} Polish topology. Then, we assume that the Markov process is \emph{constructible} in the sense that there is a \emph{jointly measurable} mapping
$ \chi: (x,t,u) \in F \times \R^+ \times [0,1] \mapsto F \cup \set{\partial} $
such that the underlying Markov process is constructed from any initial condition $X_0$ by setting
$$ X_t = \chi(X_0,t ,U),$$
where $U$ is uniform and independent of $X_0$. If $F$ is Polish and $X$ is c\`adl\`ag, then $X$ is automatically constructible, since the Skorokhod space of c\`adl\`ag paths with values in a Polish space $F$ is itself Polish, ensuring in particular the existence of regular conditional distributions. \medskip
 
In this general setting, the minimal right-continuous filtration generated by $X$ is defined as the minimal \emph{right-continuous} filtration  $(\calF_t^X)_{t \geq 0}$ making $X$ progressively measurable.


\medskip

Next, the particle system can be rigorously constructed without measurability issue using $\chi$ by setting $X^{n,k}_t  = \chi(t-\tau_{n,k}, X^{n,k}_{\tau_{n,k}},U_{n,k})$ for $ \tau_{n,k}\leq t \leq \tau_{n,k+1}$, where $U_{n,k}$ are i.i.d, $1 \leq n \leq N$ and $k \geq 0$. The filtration of the particle system $\calF_t$ at time $t$ is then generated by the events of the form
$$ A_h \cap \set{\tau_{n,k} + h \leq t}$$
where $A_h \in \calF_h^{X^{n,k}}$, $h \geq 0$, $1 \leq n \leq N$, and $k \geq 0$.

\subsection{The Feller case}\label{sec:fellerannex}
The general results on Feller processes mentioned in this section can be found in \cite{RogWil_vol1}, Chapter~III.\medskip

In this section, we assume that the Polish state space $K \eqdef F \cup \set{\partial}$ is \emph{compact}. The compactification of locally compact spaces is standard, see for example ~\cite{RogWil_vol1}. We denote by $C(K)$ the space of continuous functions on $K$, endowed with the uniform norm. Let us recall that the semi-group $(Q^t)_{t \geq 0}$ is called Feller-Dynkin, or in short Feller, if $t \mapsto Q^t(\ph)$ is continuous in $C(K)$ for any $\ph \in C(K)$. \medskip

In practice, the following weaker conditions are sufficient to check that a Markov semi-group is Feller: (i) $\lim_{t \to 0} Q^t(\ph)(x) = \ph(x)$ for each $x \in K$ and each $\ph \in C(K)$; (ii) for each $t \geq 0$ and each $\ph \in C(K)$,  $Q^t(\ph) \in C(K)$. \medskip

Any Markov process with a Feller semi-group of probability transitions has a \cadlag modification. Among the many nice properties of the latter, we will use the following ones:
\begin{itemize}
\item first, the minimal filtration generated by the process is right-continuous (Blumenthal $0-1$ law), so that the process is automatically Markov with respect to the minimal right-continuous filtration it generates. \item second, the generator $L$ can be defined as the $C(K)$-infinitesimal generator of the semigroup $(Q^t)_{t \geq 0}$. The - dense - domain $\calD(L) \subset C(K)$ is defined as the set of functions $\ph$ such that 
$$L(\ph) = \lim_h \frac{Q^h\ph - \ph}{h}\in C(K),$$
whenever the limit exists. Uniformity in the latter definition yields $L Q^t = Q^t L$ so that if $\ph \in \calD(L)$, then for any initial distribution of $X_0$, the process $t \mapsto \ph(X_t) - \int_0^t L(\ph)(X_s) \d s$ is a martingale with respect to the natural filtration of $X$.
\end{itemize}

The following lemma may be useful in practice to check that a given function $\ph$ belongs to $\calD(L)$. 
\begin{Lem}\label{lem:domain}
 Let $(X_t)_{t \geq 0}$ be Feller. If the pointwise limit
 $$ L(\ph)(x)  = \lim_{h \downarrow 0} \frac{Q^h(\ph)(x)-\ph(x)}{h} $$ 
 is continuous with respect to $x \in K$, then $\ph \in \calD(L)$, the domain of the $C(K)$-infinitesimal generator.
\end{Lem}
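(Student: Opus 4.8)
The plan is to avoid proving directly that the difference quotients $(Q^h(\ph)-\ph)/h$ converge \emph{uniformly} to $g\eqdef L(\ph)$. This is the genuinely delicate point: pointwise convergence of a family of continuous functions on the compact $K$ need not be uniform, and here the quotients are not even known to be uniformly bounded, so a naive dominated-convergence argument inside $Q^t(A_h\ph)(x)$ fails. Instead I would identify $\ph$ as the image of a continuous function under the resolvent. Since the Feller property makes $(Q^t)_{t\ge 0}$ a strongly continuous contraction semigroup on $C(K)$, the standard Feller/Hille--Yosida theory (see \cite{RogWil_vol1}) provides a bounded operator $R_1\eqdef\int_0^{\infty}e^{-t}Q^t\d t$ which is a bijection from $C(K)$ onto $\calD(L)$ and satisfies $(\Id-L)R_1=\Id$ on $C(K)$, so that $LR_1=R_1-\Id$. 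Writing $g\in C(K)$ for the pointwise limit in the statement, and denoting by $\widehat L$ the operator that sends a function to the pointwise limit of its difference quotients wherever that limit exists, I note that $\widehat L$ is linear on its domain of definition and that strong convergence implies pointwise convergence, so $\widehat L$ agrees with $L$ on $\calD(L)$.

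First I would introduce
$$
w\eqdef\ph-R_1(\ph-g)\in C(K)
$$
and compute its pointwise generator. Since $R_1(\ph-g)\in\calD(L)$, one has $\widehat L\,R_1(\ph-g)=L\,R_1(\ph-g)=R_1(\ph-g)-(\ph-g)$. Combined with the hypothesis $\widehat L\ph=g$ and linearity of $\widehat L$, this yields
$$
\widehat L w=g-\bigl(R_1(\ph-g)-(\ph-g)\bigr)=\ph-R_1(\ph-g)=w,
$$
so that $w$ is a continuous function whose pointwise generator equals $w$ itself.

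The key step is then a maximum-principle argument showing $w\equiv 0$. Suppose not; then either $M\eqdef\max_K w>0$ or $\min_K w<0$. In the first case pick $x_0$ with $w(x_0)=M$. Positivity of $Q^h$ together with $Q^h(\un)\le\un$ gives, since $w\le M\un$ and $M\ge 0$, the bound $Q^h w\le M\,Q^h(\un)\le M\un$, hence $\bigl(Q^hw(x_0)-w(x_0)\bigr)/h\le 0$ for every $h>0$ and therefore $\widehat Lw(x_0)\le 0$; but $\widehat Lw(x_0)=w(x_0)=M>0$, a contradiction. The case of a strictly negative minimum is symmetric: at $x_1$ with $w(x_1)=m<0$ one has $Q^hw\ge m\,Q^h(\un)\ge m\un$, whence $\widehat Lw(x_1)\ge 0>m=\widehat Lw(x_1)$, again impossible. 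Therefore $w\equiv 0$, i.e.\ $\ph=R_1(\ph-g)\in\calD(L)$, which is the claim; applying $L$ and using $LR_1=R_1-\Id$ even identifies $L(\ph)=g$. The only real obstacle is the one flagged at the outset---passing from pointwise to strong convergence---and the resolvent representation combined with the maximum principle is precisely what dispenses with it, so no uniform control of the difference quotients is ever required.
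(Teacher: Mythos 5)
Your argument is correct and is essentially the paper's proof written out in full: the paper simply invokes the Dynkin--Reuter lemma and verifies its hypothesis via the same maximum principle you use (a continuous $\psi$ with pointwise generator $\widehat L\psi=\psi$ must vanish), while your resolvent computation $w=\ph-R_1(\ph-g)$, $\widehat L w=w$ is exactly the content of that cited lemma unpacked. Your version is self-contained and handles the sub-Markovian inequality $Q^h(\un)\le\un$ cleanly by splitting on the sign of the extremum, but the underlying idea is identical to the paper's.
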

\begin{proof}
 It is a consequence of the Dynkin-Reuter lemma (Lemma~$4.17$, Chapter~III of~\cite{RogWil_vol1}), in the context of Hille-Yosida semigroup theory. Indeed, if $L(\ph) = \ph \in C(K)$ where $L(\ph)$ is defined in a pointwise sense, then by construction $L(\ph)(x_{\rm max}) \leq 0$ if $\ph(x_{\rm max}) = \sup \ph$, so that $\ph \leq 0$. The same reasoning leads to $-\ph \leq 0$, and consequently $\ph = 0$.
\end{proof}

We can then deduce easily the following result.
\begin{Pro}
 Let $(X_t)_{t \geq 0}$ be Feller. If there exists a continuous function $\lambda$ such that
for any $x\in F$
$$ \lambda(x)  = \lim_{h \downarrow 0} \frac{\P_x(X_h=\partial)}{h},$$ 
then Assumption~(SK) is satisfied.
\end{Pro}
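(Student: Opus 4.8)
The plan is to deduce Assumption~(SK) from the characterization recalled in Section~\ref{maoezcj}: (SK) holds precisely when the indicator $\un_{\set{\partial}}$ (equivalently $\un_F = 1-\un_{\set{\partial}}$) belongs to the domain of the extended generator of $t\mapsto X_t$, with $L(\un_{\set{\partial}})=\lambda$. First I would extend $\lambda$ to all of $K$ by setting $\lambda(\partial)\eqdef 0$; since $\lambda$ is continuous on the compact space $K$, it is automatically bounded, which furnishes the required bounded measurable $\lambda:F\to\R^+$. Note also that, because $\lambda(\partial)=0$ and $\partial$ is absorbing, the extended-generator martingale associated with $\un_{\set{\partial}}$, namely $t\mapsto \un_{X_t=\partial}-\int_0^t\lambda(X_s)\d s$, coincides with the process $t\mapsto \un_{X_t=\partial}-\int_0^{t\wedge\tau_\partial}\lambda(X_s)\d s$ of Assumption~(SK) (up to the $\calF_0$-measurable constant $\un_{X_0=\partial}$, irrelevant to the martingale property). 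Hence it suffices to show that $\un_{\set{\partial}}$ lies in the extended generator with $L(\un_{\set{\partial}})=\lambda$.

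The most direct route is Lemma~\ref{lem:domain}: the pointwise limit $\lim_{h\downarrow 0}(Q^h\un_{\set{\partial}}-\un_{\set{\partial}})(x)/h$ equals $\P_x(X_h=\partial)/h\to\lambda(x)$ for $x\in F$ and $0=\lambda(\partial)$ for $x=\partial$, hence is the continuous function $\lambda$. If $\un_{\set{\partial}}\in C(K)$ — which happens exactly when $\partial$ is an isolated point of $K$ — then Lemma~\ref{lem:domain} immediately gives $\un_{\set{\partial}}\in\calD(L)$ with $L(\un_{\set{\partial}})=\lambda$, and the martingale property (hence (SK)) follows from the last item of Section~\ref{sec:fellerannex}. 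In general, however, $\un_{\set{\partial}}$ need not belong to $C(K)$ (in a one-point compactification $\partial$ is not isolated), so Lemma~\ref{lem:domain} does not apply verbatim, and I would fall back on Lemma~\ref{lem:gen}, which only requires $\un_{\set{\partial}}$ and $\lambda$ to be bounded. It reduces (SK) to the scalar identity
\[
\frac{\d}{\d t}\E\b{\un_{X_t=\partial}} = \frac{\d}{\d t}\P(X_t=\partial) = \E\b{\lambda(X_t)},
\]
for every initial law and every $t\in[0,T]$. Using the Markov property, the right difference quotient is
\[
\frac{\P(X_{t+h}=\partial)-\P(X_t=\partial)}{h} = \E\b{\frac{\P_{X_t}(X_h=\partial)}{h}\,\un_{X_t\in F}},
\]
whose integrand converges pointwise to $\lambda(X_t)$ by hypothesis. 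Passing the limit inside the expectation, and noting that the resulting right derivative is continuous (so it is the genuine derivative of the nondecreasing map $t\mapsto\P(X_t=\partial)$), then yields the claimed identity.

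The main obstacle is the interchange of limit and expectation in the last step, i.e.\ a domination of the difference quotients uniform in $x$ — precisely the difficulty created by $\un_{\set{\partial}}\notin C(K)$, where the mere pointwise bound $\P_x(X_h=\partial)/h\to\lambda(x)\le\norm{\lambda}_\infty$ must be upgraded to one uniform in $x$. I would obtain it from the functional $G(h)\eqdef\sup_x\P_x(\tau_\partial\le h)$, which is subadditive by the Markov property, so that $\lim_{h\downarrow 0}G(h)/h=\sup_{h>0}G(h)/h\eqdef\Lambda$ exists in $[0,\infty]$. Letting $h\downarrow 0$ in $G(h)/h\ge \P_x(X_h=\partial)/h$ gives $\Lambda\ge\norm{\lambda}_\infty$; the crux is the reverse bound $\Lambda\le\norm{\lambda}_\infty<\infty$, which then provides the uniform domination $\P_x(X_h=\partial)/h\le\Lambda$ required for dominated convergence. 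Proving $\Lambda<\infty$ is where Feller regularity enters — for instance by sandwiching $\un_{\set{\partial}}$ between functions of $C(K)$ lying in $\calD(L)$ (obtained through the resolvent $R_\alpha$) and exploiting the strong continuity of $(Q^t)_{t\ge 0}$, in the spirit of the Dynkin--Reuter argument behind Lemma~\ref{lem:domain}. Once the uniform bound is secured, dominated convergence closes the interchange and Lemma~\ref{lem:gen} delivers Assumption~(SK).
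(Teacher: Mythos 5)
Your first paragraph correctly identifies the route the paper intends: the proposition is placed immediately after Lemma~\ref{lem:domain} precisely so that one applies that lemma to $\un_{\set{\partial}}$ (pointwise generator $\lambda$, extended by $\lambda(\partial)=0$), and then invokes the fact recalled in Section~\ref{sec:fellerannex} that $t\mapsto \ph(X_t)-\int_0^t L(\ph)(X_s)\d s$ is a martingale for $\ph\in\calD(L)$, together with $\int_0^t\lambda(X_s)\d s=\int_0^{t\wedge\tau_\partial}\lambda(X_s)\d s$ since $\partial$ is absorbing and $\lambda(\partial)=0$. Your observation that this requires $\un_{\set{\partial}}\in C(K)$ (i.e.\ $\partial$ isolated in $K$) is a fair reading of Lemma~\ref{lem:domain}, whose conclusion $\ph\in\calD(L)\subset C(K)$ only makes sense for continuous $\ph$; that is the setting in which the paper's ``easy deduction'' is complete.

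However, the fallback argument you substitute when $\un_{\set{\partial}}\notin C(K)$ has a genuine gap at its decisive step. Reducing (SK) to the identity $\frac{\d}{\d t}\P(X_t=\partial)=\E[\lambda(X_t)]$ via Lemma~\ref{lem:gen} is fine, and the subadditivity of $G(h)=\sup_x\P_x(\tau_\partial\le h)$ does give the existence of $\Lambda=\lim_{h\downarrow 0}G(h)/h\in[0,+\infty]$; but the whole difficulty of the interchange is the finiteness $\Lambda<\infty$, and you only assert that it can be obtained ``in the spirit of the Dynkin--Reuter argument'' without giving the argument. Pointwise convergence of $\P_x(X_h=\partial)/h$ to a bounded limit does not by itself yield a bound uniform in $x$, and the Feller property acts on $C(K)$, hence gives no direct control on $Q^h(\un_{\set{\partial}})$ when $\un_{\set{\partial}}$ is discontinuous; the sandwiching by resolvent functions that you allude to is therefore exactly what would need to be carried out, and it is not. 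A secondary unjustified step: you write that $\lambda$ is continuous on the compact space $K$ and hence bounded, whereas the hypothesis only gives continuity on $F$; the extension by $\lambda(\partial)=0$ is continuous on $K$ (and $\lambda$ bounded) only under the same kind of extra assumption ($\partial$ isolated, or $\lambda$ vanishing at $\partial$), and boundedness of $\lambda$ is part of what Assumption~(SK) demands. In short: in the paper's implicit setting the one-line proof via Lemma~\ref{lem:domain} that you sketch first is already complete and you should stop there; in the more general setting you raise, your alternative proof is not closed.
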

A criterion for~(CC) is the following:
\begin{Pro}\label{lem:comp_fell}
 Let $(X_t)_{t \geq 0}$ be Feller. Assume that $\ph \in \calD(L)$ and that $\p{Q^{t}(\ph)}^2 \in \calD(L)$ for any $t\in(0,T)$. Set 
 $$\Gamma(\ph,\ph) =L(\ph^2) - 2 \ph L\ph\hspace{1cm}\mbox{and}\hspace{1cm}\Gamma_t(\ph) = \Gamma\p{Q^{t}(\ph),Q^{t}(\ph)},$$
 and suppose that $\sup_{(x,t) \in K \times (0,T)} \Gamma_t(\ph)< + \infty$. 
 Then $\ph$ satisfies Assumption~(CC).
\end{Pro}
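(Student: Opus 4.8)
The plan is to deduce Assumption~(CC) from Lemma~\ref{lem:gen} applied to the time-dependent function $\psi(t,x) \eqdef \p{Q^{T-t}(\ph)}^2(x)$, whose associated extended generator should be precisely $\Gamma_{T-t}(\ph)$. Writing $u_t \eqdef Q^{T-t}(\ph)$, the backward identity $\frac{\d}{\d t} u_t = -L u_t$ (valid on $\calD(L)$ since $LQ^s = Q^s L$) together with the product rule gives the formal computation already announced in Section~\ref{maoezcj}, namely $\p{\partial_t + L}\psi = L(u_t^2) - 2 u_t L u_t = \Gamma(u_t,u_t) = \Gamma_{T-t}(\ph)$. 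First I would record that $\psi$ is bounded, since $\norm{u_t}_\infty \le \norm{\ph}_\infty$, and that $\p{\partial_t+L}\psi = \Gamma_{T-t}(\ph)$ is bounded because $\sup_{(x,t)}\Gamma_t(\ph) < \infty$; this same bound yields $\int_0^T \norm{\Gamma_t(\ph)}_\infty \d t < \infty$, the integrability requirement in~(CC).

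The core step is to verify the criterion of Lemma~\ref{lem:gen}: for every initial law $\mu_0 = \calL(X_0)$ and every interior $t$,
$$\frac{\d}{\d t}\,\mu_0 Q^t\b{u_t^2} = \mu_0 Q^t\b{\Gamma_{T-t}(\ph)}.$$
Setting $g(t) \eqdef \mu_0 Q^t\b{u_t^2}$, I would split the increment as $g(t+h)-g(t) = \mu_0 Q^{t+h}\b{u_{t+h}^2 - u_t^2} + \mu_0 Q^t\b{(Q^h - \Id)u_t^2}$. The second piece divided by $h$ converges to $\mu_0 Q^t\b{L(u_t^2)}$ because $u_t^2 = \p{Q^{T-t}(\ph)}^2 \in \calD(L)$ for $t \in (0,T)$, and $Q^t$ is a contraction. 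For the first piece I would write $u_{t+h} - u_t = -Q^{T-t-h}(Q^h\ph - \ph)$ and use $\ph \in \calD(L)$ to get $\tfrac1h(Q^h\ph - \ph) \to L\ph$ in $C(K)$; contractivity, strong continuity of the Feller semigroup, and $L Q^s = Q^s L$ then give $\tfrac1h(u_{t+h}-u_t) \to -L u_t$ uniformly, while $u_{t+h}+u_t \to 2u_t$ uniformly, so the first piece divided by $h$ tends to $\mu_0 Q^t\b{-2u_t L u_t}$. Adding the two limits reproduces $\mu_0 Q^t\b{\Gamma(u_t,u_t)}$, which is the claimed identity.

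To finish, Lemma~\ref{lem:gen} gives that $t\mapsto \psi(t,X_t) - \psi(0,X_0) - \int_0^t \Gamma_{T-s}(\ph)(X_s)\d s$ is a martingale for the minimal right-continuous filtration. Subtracting the $\calF_0$-measurable term $\psi(0,X_0) = \p{Q^T(\ph)}^2(X_0)$ preserves the martingale property, and since $\partial$ is absorbing one has $L f(\partial) = 0$ for every $f$, hence $\Gamma_{T-s}(\ph)(\partial) = 0$; therefore $\int_0^t \Gamma_{T-s}(\ph)(X_s)\d s = \int_0^{t\wedge\tau_\partial}\Gamma_{T-s}(\ph)(X_s)\d s$, which is exactly the martingale demanded by Assumption~(CC). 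The main obstacle is the uniform-in-$C(K)$ differentiation of $g$: the whole argument hinges on upgrading the two pointwise difference quotients to uniform convergence, which is precisely what the Feller contraction/strong-continuity package and the hypotheses $\ph,\, \p{Q^{T-t}(\ph)}^2 \in \calD(L)$ supply. A secondary technical point is that this computation produces the right derivative of $g$ on the open interval; one should note that $t\mapsto \mu_0 Q^t\b{\Gamma_{T-t}(\ph)}$ is continuous (via Feller continuity and the uniform bound on $\Gamma$) to promote it to a genuine derivative and to absorb the two endpoints $t\in\{0,T\}$, which are negligible in the integral.
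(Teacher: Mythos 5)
Your proof is correct and follows essentially the same route as the paper's: both invoke Lemma~\ref{lem:gen} and split the difference quotient of $t\mapsto \eta_0 Q^t\b{\p{Q^{T-t}(\ph)}^2}$ into exactly the same two terms, handling the first via the uniform convergence of $\tfrac1h\p{Q^{T-t-h}(\ph)-Q^{T-t}(\ph)}$ to $-L\p{Q^{T-t}(\ph)}$ (from $\ph\in\calD(L)$) and the second via $\p{Q^{T-t}(\ph)}^2\in\calD(L)$. Your additional remarks on the endpoints, the boundedness of $\Gamma_t(\ph)$, and the reduction of $\int_0^t$ to $\int_0^{t\wedge\tau_\partial}$ via $\Gamma_{T-s}(\ph)(\partial)=0$ are welcome details that the paper leaves implicit.
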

\begin{proof}
According to Lemma~\ref{lem:gen}, Assumption~(CC) will be a consequence of the formula
\begin{equation}\label{eq:lim_fell}
 \frac{\d}{\d t} \eta_0 Q^t\p{\p{Q^{T-t}(\ph)}^2 } = \eta_0 Q^t \p{ \Gamma\p{Q^{T-t}(\ph),Q^{T-t}(\ph)} }
\end{equation}
for each initial $\eta_0 = \calL(X_0)$ and each $t \in (0,T)$. \medskip
Then consider 
\begin{align*}
\frac{\d}{\d t} \eta_0 Q^t\p{\p{Q^{T-t}(\ph)}^2 }  = 
& \quad \lim_{h \to 0}  \, \, 
\frac{1}{h}\eta_0 Q^{t+h}\p{\p{Q^{T-t-h}(\ph)}^2 - \p{Q^{T-t}(\ph)}^2 } \\&\hspace{3em}+
\frac{1}{h}\eta_0 \p{Q^{t+h}-Q^t}\p{\p{Q^{T-t}(\ph)}^2 },
\end{align*}
and remark that in the above, the first term of the right hand side converges to 
$$- 2 \eta_0 Q^{t}\p{Q^{T-t}(\ph) L \b{Q^{T-t}(\ph)}}$$
by the uniform convergence in the Feller case of 
$$\lim_{h \to 0} \frac{Q^{T-t-h}(\ph)-Q^{T-t}(\ph)}{h} = -L(Q^{T-t}(\ph))$$ 
when $ \ph \in \calD(L)$. Besides, the second term of the right hand side goes to  
$$\eta_0 Q^{t}\p{ L \b{\p{Q^{T-t}(\ph)}^2}}$$ since by assumption $\p{Q^{T-t}(\ph)}^2$ belongs to $\calD(L)$ for any fixed $t \in (0,T)$.
%
%
%
%
\end{proof}

%

\subsection{Carré du champ}\label{sec:carre}

%
%
%
If the time-dependent functions $\psi(t,x)$ and $\psi^2(t,x)$ belong to the domain of the extended generator of $t \mapsto (t,X_t)$ as defined in Section \ref{maoezcj}, then considering the martingale 
$$C_t(\psi) \eqdef \psi(t,X_t) - \psi(0,X_0) - \int_0^t \p{\partial_s+L}(\psi)(s,X_s) \d s,$$ 
it can be checked that the process
\[
t\mapsto C^2_t(\psi) - \int_0^t \Gamma(\psi,\psi)(s,X_s) \d s
\]
is a martingale, where $\Gamma$ is the so-called ``carr\'e du champ'' operator defined by
$$\Gamma(\psi,\psi) = \p{\partial_t+L}( \psi^2 ) - 2 \psi \p{\partial_t+L}(\psi).$$
Note that, for a time-independent function $\ph$, one has
$$\Gamma(\ph,\ph) = L( \ph^2 ) - 2 \ph L(\ph).$$
The process 
$$t \mapsto \int_0^{t} \Gamma(\psi,\psi)(s,X_s) \d s$$ 
may be defined as the unique continuous increasing process appearing in the Doob-Meyer decomposition of the bounded sub-martingale $t \mapsto C^2_t(\psi)$. This is exactly the definition of the predictable quadratic variation of $t \mapsto C_t(\psi)$, so that 
$$  \frac{\d}{\d t} \bracket{C(\psi),C(\psi)}_t = \Gamma(\psi,\psi)(t,X_t).$$
In most cases, one has the compatibility formula - formally Leibniz chain rule with respect to time differentiation -
\begin{equation}\label{eq:comp}
\Gamma(\psi(t,\, .),\psi(t, \, .))(x) = \Gamma(\psi,\psi)(t,x),
\end{equation}
but some care is needed since the latter is only formal in general.
In this approach, the function $\Gamma_{T-t}(\ph)$ in Assumption~(CC) is thus rigorously given by
\[
\Gamma_{T-t}(\ph)(x) = \Gamma \p{ Q^{T-\,.}(\ph),Q^{T- \, .}(\ph) } (t,x),
\]
which is simply the carr\'e du champ computed with the time-dependent test function $\psi(t,x) = Q^{T-t}(\ph)(x)$. In the Feller case, the compatibility condition~\eqref{eq:comp} is ensured by Proposition~\ref{lem:comp_fell}.\medskip

By definition, one can also see $\Gamma_{T-t}(\ph)(X_t)$ as the time derivative of the predictable quadratic variation of the martingale $t \mapsto \M_t(\ph) \eqdef Q^{T-t}(\ph)(X_t)$, which reads $$ \Gamma_{T-t}(\ph)(X_t) = \frac{\d}{\d t} \bracket{\M(\ph),\M(\ph)}_t.$$

Finally, $\Gamma_{T-t}(\ph)(X_t)$ can also  can be interpreted as the derivative of a variance as follows
\begin{align}\label{eq:def_cc_2}
 \E \b{\Gamma_{T-t}(\ph)(X_t)} & = -\frac{\d}{\d t} \E \b{\V(\ph(X_T)|X_t) } \geq 0. 
\end{align}
Indeed, by Jensen's inequality, $h \mapsto \E\b{\V(\ph(X_T)|X_{t+h}) | X_t=x}$ is decreasing so that the limit in \eqref{eq:def_cc_2} always exists in $[0,+\infty]$.\medskip

\subsection{Proof of Proposition~\ref{pdmp}}\label{amoch}
We start with two simple standard remarks.
\begin{Lem}\label{lem:pdmp1}
If $\psi$ is a bounded function, then the process
\[
t \mapsto\sum_{s \leq t,\ s \in \set{T_1,T_2, \cdots, \tau_\partial}}  
\left(\psi(s,X_{s^-},X_s)  
- \int_{x' \in F \cup \set{ \partial }} \psi(s,X_{s^-},x') \frac{q(X_{s^-},\d x')}{\bar{q}(X_{s^-})}\right)
\]
is a martingale.
\end{Lem}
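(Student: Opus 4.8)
The plan is to recognise the process in the statement as a compensated sum of the jump increments of the PDMP, and to exploit the fact that each such increment is centred with respect to the pre-jump information. I would first fix the notation of Definition~\ref{def:PDMP}: the jumps occur at the random times $T_1, T_2, \ldots$, and, by the sequential construction (step~(iii), realised through the jointly measurable map $\chi$ of Section~\ref{sec:def_ips}), the post-jump position $X_{T_n}$ is, conditionally on $\calF_{T_n^-}$, distributed according to $q(X_{T_n^-},\d x')/\bar q(X_{T_n^-})$ and otherwise independent of the past given $X_{T_n^-}$. Since $T_n$ is a stopping time (hence $\calF_{T_n^-}$-measurable) and $X_{T_n^-}$ is predictable, this yields the key identity
\[
\E\b{ \psi(T_n, X_{T_n^-}, X_{T_n}) \mid \calF_{T_n^-}} = \int_{x' \in F \cup \set{\partial}} \psi(T_n, X_{T_n^-}, x') \frac{q(X_{T_n^-}, \d x')}{\bar q(X_{T_n^-})}.
\]
Writing $D_n$ for the $n$-th summand appearing in the statement, this reads exactly $\E[D_n \mid \calF_{T_n^-}] = 0$, while $D_n$ is bounded by $2\norm{\psi}_\infty$ and $\calF_{T_n}$-measurable. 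The process of interest is then $N_t = \sum_{n \geq 1} D_n \un_{T_n \leq t}$.

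Second, I would show that each single-jump process $t \mapsto D_n \un_{T_n \leq t}$ is a martingale. This is the elementary single-jump criterion already used in Section~\ref{mazlco} (see Lemma~\ref{albcios}): if $U$ is bounded and $\calF_\tau$-measurable with $\E[U \mid \calF_{\tau^-}] = 0$, then $t \mapsto U \un_{t \geq \tau}$ is a martingale. For self-containedness one checks it directly: for $s \leq t$ and $B \in \calF_s$, the sets $B \cap \set{s < \tau}$ and $B \cap \set{t < \tau}$ both belong to $\calF_{\tau^-}$, so that $\E[U \un_B \un_{s < \tau}] = \E[U \un_B \un_{t < \tau}] = 0$, whence $\E[U \un_{s < \tau \leq t} \mid \calF_s] = 0$, which is the martingale increment property.

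Third, I would sum over $n$ and pass the conditional expectation through the series. The boundedness of the jump intensity, $\sup_x \bar q(x) < \infty$, ensures that the number $K_t$ of jumps before time $t$ is stochastically dominated by a Poisson variable with parameter $\norm{\bar q}_\infty t$, so that $\sum_{n \geq 1} \abs{D_n} \un_{T_n \leq t} \leq 2 \norm{\psi}_\infty K_t$ is integrable. This provides both the integrability of $N_t$ and the domination justifying the interchange of the conditional expectation with the sum. Combining with the previous step, for $s \leq t$,
\[
\E\b{N_t - N_s \mid \calF_s} = \sum_{n \geq 1} \E\b{ D_n \un_{s < T_n \leq t} \mid \calF_s} = 0,
\]
which is the asserted martingale property.

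The main obstacle is bookkeeping rather than depth: one must keep the filtration details straight, distinguishing $\calF_{T_n^-}$ (which carries $T_n$ and $X_{T_n^-}$ but \emph{not} the post-jump value $X_{T_n}$) from $\calF_{T_n}$, and one must tie the conditional law of the post-jump draw rigorously back to the construction of the process, so that the key identity above is genuinely a statement about $\calF_{T_n^-}$. The interchange of summation and conditional expectation is the only properly analytic point, and it is legitimised by the Poisson domination provided by the bounded jump intensity assumption.
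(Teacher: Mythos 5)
Your argument is correct and follows essentially the same route as the paper: the single-jump martingale criterion (Lemma~\ref{albcios}) applied at each jump time using the conditional law $q(X_{T_n^-},\d x')/\bar q(X_{T_n^-})$ of the post-jump position given $\calF_{T_n^-}$, followed by summation over the jumps with integrability secured by comparison with a Poisson process of rate $\norm{\bar q}_\infty$. Your write-up merely makes explicit the domination argument that the paper leaves implicit.
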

\begin{proof}
By construction, the random variables generating the jumps and the jump times of the PDMP are independent. As a consequence, Lemma~\ref{albcios} can be applied to each jump time, and the result follows by summing over these jump times. Then, one can see that the resulting local martingale is indeed a martingale by comparing it to a Poisson process with rate $\norm{\bar{q}}_{\infty}$.
\end{proof}

\begin{Lem}\label{lem:pdmp2}
If $\phi$ is a bounded function and if the process $s \mapsto \phi(s,X_{s})$ admits a version with a left limit denoted $\phi(s^{-},X_{s^-})$, then the process
\begin{align}\label{J2t}
t \mapsto \sum_{s \leq t,\ s \in \set{T_1,T_2, \cdots, \tau_\partial}} \phi(s^{-},X_{s^-})  -
\int_0^t \phi(u,X_{u}) {\bar{q}(X_u)} \d u
\end{align}
is a martingale. As a consequence, with the notation of Lemma~\ref{lem:pdmp1},
\[
t  \mapsto \sum_{s \leq t,\ s \in \set{T_1,T_2, \cdots, \tau_\partial}} 
 \psi(s,X_{s^-},X_s)  - \int_0^t\int_{x' \in F \cup \set{ \partial }} \psi(u,X_u,x')q(X_u,\d x') \d u
\]
is a martingale.
\end{Lem}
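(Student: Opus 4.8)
The plan is to prove the first (jump-timing) martingale by a direct computation on each inter-jump interval, in the same spirit as the proof of Lemma~\ref{lem:pdmp1} (which compensates the jump \emph{marks}), and then to obtain the second statement by simply adding the two martingales.

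\emph{First statement.} I would work conditionally on $\calF_{T_n}$ and use the explicit construction of the jump times. Given $\calF_{T_n}$, on the interval $[T_n,T_{n+1})$ the trajectory $u \mapsto X_u = \psi_{u-T_n}(X_{T_n})$ is deterministic, while $T_{n+1}$ is the first time the increasing map $\Lambda(u) \eqdef \int_{T_n}^u \bar q(X_v)\d v$ reaches the independent unit-mean exponential $E_{n+1}$. Hence $\P(T_{n+1}>u \mid \calF_{T_n}) = e^{-\Lambda(u)}$, so $T_{n+1}$ has conditional density $\bar q(X_u)e^{-\Lambda(u)}$ and the memoryless property $\P(T_{n+1}>u \mid T_{n+1}>s,\calF_{T_n}) = e^{-(\Lambda(u)-\Lambda(s))}$. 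I would then introduce the single-interval process
\[
Y^{n}_t \eqdef \phi(T_{n+1}^-,X_{T_{n+1}^-})\un_{T_{n+1}\le t} - \int_{T_n\wedge t}^{T_{n+1}\wedge t}\phi(u,X_u)\bar q(X_u)\d u
\]
and verify that it has vanishing conditional increments: on $\set{T_{n+1}\le s}$ the increment is zero, whereas on $\set{T_{n+1}>s}$ the conditional expectation of the jump term equals $\int_s^t \phi(u,X_u)\bar q(X_u)e^{-(\Lambda(u)-\Lambda(s))}\d u$, which is exactly the conditional expectation of the compensator increment (using $\phi(u^-,X_{u^-})=\phi(u,X_u)$ for a.e.\ $u$ along the continuous flow). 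This matching via the hazard rate is the computational crux. Reindexing the jump terms and telescoping the integrals, $\sum_n Y^n$ reconstitutes the process of the statement.

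\emph{From local to genuine martingale.} The sum over $n$ is a priori only a local martingale, and upgrading it is the step I expect to require the most care. Since $\bar q$ is bounded by assumption and $\phi$ is bounded, the number of jumps before time $t$ is stochastically dominated by a Poisson variable of parameter $\norm{\bar q}_\infty\, t$; consequently both the jump sum (bounded by $\norm{\phi}_\infty$ times the jump count) and the compensator (bounded by $\norm{\phi}_\infty\norm{\bar q}_\infty\, t$) are integrable, which promotes the local martingale to a true martingale, exactly as at the end of the proof of Lemma~\ref{lem:pdmp1}.

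\emph{Second statement.} Set $h(s,x)\eqdef \int_{x'\in F\cup\set{\partial}}\psi(s,x,x')\,q(x,\d x')/\bar q(x)$, which is bounded by $\norm{\psi}_\infty$. Lemma~\ref{lem:pdmp1} gives that $t\mapsto \sum_{s\le t}\psi(s,X_{s^-},X_s) - \sum_{s\le t} h(s,X_{s^-})$ is a martingale, and the first statement applied to $\phi=h$ gives that $t\mapsto \sum_{s\le t} h(s^-,X_{s^-}) - \int_0^t h(u,X_u)\bar q(X_u)\d u$ is a martingale. Adding these two martingales, the two discrete $h$-sums cancel—at each jump time the pre-jump time and space arguments coincide along the continuous flow, so $h(s^-,X_{s^-})=h(s,X_{s^-})$—while $h(u,X_u)\bar q(X_u) = \int_{x'}\psi(u,X_u,x')\,q(X_u,\d x')$. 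This produces precisely the process of the second statement, completing the proof.
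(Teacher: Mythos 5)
Your proof is correct, but for the first martingale you take a genuinely different route from the paper. The paper represents the jump-time counting process as a time-changed Poisson process $N_{L_t}$ with $\d L_t = \bar q(X_t)\d t$, observes that $t\mapsto N_{L_t} - \int_0^t \bar q(X_s)\d s$ is a c\`adl\`ag martingale, and then obtains \eqref{J2t} as the stochastic integral of the predictable (left-continuous) integrand $\phi(s^-,X_{s^-})$ against this compensated martingale; the upgrade from local to true martingale is done, as in your argument, by comparison with a Poisson process of rate $\norm{\bar q}_\infty$. Your bare-hands computation of the conditional increments on each inter-jump interval, matching the conditional law of the jump term against the compensator through the hazard rate $\bar q(X_u)e^{-(\Lambda(u)-\Lambda(s))}$, proves the same thing from first principles: it is more elementary and self-contained (no appeal to stochastic integration against point-process martingales), at the price of more bookkeeping. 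In particular you still owe the routine verification that each $Y^n$ has vanishing conditional increments with respect to the full filtration also on the event $\set{s<T_n}$ (tower property and optional stopping at $T_n$), and the identification $\phi(u^-,X_{u^-})=\phi(u,X_u)$ for Lebesgue-a.e.\ $u$ should be justified by the countability of the discontinuities of the c\`adl\`ag flow (the flow is not assumed continuous) combined with the absolute continuity of the conditional law of $T_{n+1}$; neither point is a genuine gap. Your derivation of the second martingale --- adding the martingale of Lemma~\ref{lem:pdmp1} to \eqref{J2t} applied with $\phi(u,x)=\int_{x'}\psi(u,x,x')\,q(x,\d x')/\bar q(x)$ --- is exactly the paper's.
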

\begin{proof}
Let $l \mapsto N_l$ denote the Poisson process associated to the jump times of the PDMP. Using the change of time defined by $\d L_t = \bar{q}(X_t) \d t$, it is standard to check that 
$$ t \mapsto N_{L_t} - \int_0^t \bar{q}(X_s) \d s$$ is a \cadlag martingale. Next, let us consider the stochastic integral
\[
\int_0^t \phi(s^{-},X_{s^-}) \p{\d N_{L_s} - \bar{q}(X_s) \d s }.
\]
By construction of the stochastic integral, this is a local martingale. Once again, a comparison with a Poisson process with rate $\norm{\bar{q}}_{\infty}$ ensures that the latter is in fact a martingale.\medskip

For the second point, it suffices to add the martingale of Lemma~\ref{lem:pdmp1} and the martingale defined by (\ref{J2t}) with  
$$\phi(u,x)=\int_{x' \in F \cup \set{ \partial }} \psi(u,x,x') \frac{q(x,\d x')}{\bar{q}(x)}.$$

\end{proof}

Now, if we apply Lemma~\ref{lem:pdmp2} with $\psi(s,x,x') = \un_\partial(x')-\un_\partial(x)$, it is readily seen that Assumption~(SK) is satisfied with $\lambda(x):=q(x,\{\partial\})$.
\medskip

Next, let us consider the \cadlag version of the martingale 
$$\M_t(\ph)\eqdef  Q^{T-t}(\ph)(X_t) =\E[\ph(X_T) | X_t]$$
for some bounded $\ph$ with $\ph( \partial) = 0$. We claim that the martingale property in Assumption~(CC) is satisfied for 
$$\Gamma_t(\ph)(x) =   \int_{F \cup \{\partial\}} \p{Q^t(\ph)(x') - Q^t(\ph)(x)}^2 q(x, \d x'),$$ 
which would imply that 
$$\norm{\Gamma_t(\ph)}_\infty \leq 4\norm{\ph}_\infty^2 \norm{\bar{q}}_\infty,$$ 
and ensure that (CC) holds true. Indeed, one has the following It\^o formula:
$$
\p{Q^{T-t}(\ph)(X_t) }^2 = \p{Q^{T}(\ph)(X_0) }^2 + 2 \int_0^t \M_{s^-}(\ph) \d \M_s(\ph) + \b{\M(\ph),\M(\ph)}_t,
$$
where 
$$\b{\M(\ph),\M(\ph)}_t = \sum_{s \leq t,\ s \in \set{T_1,T_2, \cdots, \tau_\partial}} \p{\M_s(\ph) - \M_{s^-}(\ph)}^2$$
since $\M(\ph)$ is deterministic between jump times. Now, considering
$$\psi(s,x,x') =(Q^{T-s}(\ph)(x')-Q^{T-s}(\ph)(x))^2$$
in Lemma~\ref{lem:pdmp2} implies that
$$ t \mapsto  \sum_{s \leq t,\ s \in \set{T_1,T_2, \cdots, \tau_\partial}} \p{\M_s(\ph) - \M_{s^-}(\ph)}^2 - \int_0^t \Gamma_{T-s}(\ph)(X_{s^-}) \d s. $$
is a martingale, which terminates the proof of Proposition \ref{pdmp}.

\subsection{Martingales from individual particles}
Throughout this section, we detail some results on the martingales at stake in the Fleming-Viot particle system. We recall that $(\calF_t)_{t \geq 0}$ denotes the right-continuous filtration generated by the particle system. Let $1 \leq n \leq N$ and $k \geq 0$ be given, and consider the particle with index $n$ and the associated $k$-th branching time $\tau_{n,k}$.  In order to lighten the notation, we skip the indices of the particle and of the branching time, and we denote in the aftermath $\tau=\tau_{n,k}$ as well as $\tau_{\partial}=\tau_{n,k+1}$. As before, we also set $Q^t(\ph)(\partial) = 0$.\medskip

Let us denote $(\tilde{X}_t)_{t\geq 0}$ the particle equal to $X_t$ for $t < \tau_{\partial}$ and which stays in the cemetery point $\partial$ instead of branching for $t \geq \tau_{\partial}$. In the next lemma, we use the Markov property of $\tilde{X}$ with respect to its right-continuous filtration in order to obtain c\`adl\`ag martingales.
\begin{Lem}\label{lem:mart0} For any bounded $\ph$, the process
\begin{equation}\label{eq:M_tech}
 t \mapsto \mathbb{M}_t \equiv \M_t(\ph) \eqdef \un_{t \geq \tau} \b{ Q^{T-t}(\ph)(\tilde{X}_{t}) - Q^{T-\tau}(\ph)(\tilde{X}_{\tau})   }
\end{equation}
is a $(\calF_t)_{t \geq 0}$-martingale with a c\`adl\`ag version.
\end{Lem}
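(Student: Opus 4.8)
The plan is to reduce the statement to the elementary fact that, for a time-homogeneous Markov process with semi-group $(Q^t)$, the process $u \mapsto Q^{T-u}(\ph)(Y_u)$ is a martingale, and then to transport this martingale across the stopping time $\tau$. First I would recall the basic identity: for $0 \le r \le u \le T$, the Markov property together with the semi-group relation $Q^{u-r} Q^{T-u} = Q^{T-r}$ gives $\E[ Q^{T-u}(\ph)(Y_u) \mid \calF_r] = Q^{u-r}\p{Q^{T-u}(\ph)}(Y_r) = Q^{T-r}(\ph)(Y_r)$, and general Markov theory (\cite{ry99}, Chapter~II) provides a c\`adl\`ag version. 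With the convention $Q^t(\ph)(\partial)=0$ this identity remains valid once the process has reached the cemetery, since both sides then vanish.

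Next I would identify the relevant Markov process. On the interval $[\tau,\tau_\partial)$ the particle $\tilde{X}$ coincides with a trajectory driven, by the construction of Section~\ref{sec:def_ips}, by the independent randomness $U_{n,k}$, and it is frozen at $\partial$ afterwards; hence, conditionally on $\calF_\tau$, the shifted process $(\tilde{X}_{\tau+u})_{u \ge 0}$ is a copy of the underlying Markov process with killing started from the $\calF_\tau$-measurable state $\tilde{X}_\tau$, and its natural right-continuous filtration is contained in $(\calF_{\tau+u})_{u\ge 0}$. Applying the identity of the previous step to this restarted process shows that $u \mapsto Q^{T-\tau-u}(\ph)(\tilde{X}_{\tau+u})$ is a bounded c\`adl\`ag $(\calF_{\tau+u})_{u\ge0}$-martingale on $[0,T-\tau]$, and subtracting the $\calF_\tau$-measurable constant $Q^{T-\tau}(\ph)(\tilde{X}_\tau)$ yields a martingale null at $u=0$. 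Boundedness by $\norm{\ph}_\infty$ disposes of all integrability issues.

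It then remains to convert this time-shifted martingale into an $(\calF_t)_{t\ge0}$-martingale carrying the indicator $\un_{t\ge\tau}$. Observing that $\M_t(\ph) = \un_{t\ge\tau}\b{Q^{T-t}(\ph)(\tilde{X}_t) - Q^{T-\tau}(\ph)(\tilde{X}_\tau)}$ is the restarted martingale evaluated at the shifted instant $(t-\tau)^+$, I would verify $\E[\M_t(\ph)\mid\calF_s]=\M_s(\ph)$ for $s\le t$ by splitting on the $\calF_s$-measurable event $\set{\tau\le s}$ and its complement. On $\set{\tau\le s}$ the claim is exactly the shifted martingale property between the times $s-\tau$ and $t-\tau$; on $\set{\tau>s}$, where $\M_s(\ph)=0$, one inserts $\calF_\tau$ via the tower property — legitimate because $\un_{s<\tau\le t}$ is $\calF_\tau$-measurable and $A\cap\set{s<\tau}\in\calF_{\tau^-}$ for $A\in\calF_s$, as recalled in the stochastic-calculus background — and uses that the restarted martingale has zero conditional mean given its starting $\sigma$-field $\calF_\tau$. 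This last transfer across $\tau$ is the step I expect to be the main obstacle: it is precisely where the strong Markov property and the right-continuity of $(\calF_t)$ are genuinely used, both to guarantee that the post-$\tau$ evolution is conditionally independent of $\calF_\tau$ with the correct law and to handle the event $\set{s<\tau}$ cleanly, the Markov martingale identity of the first step being otherwise entirely routine.
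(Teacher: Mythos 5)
Your proof is correct and ultimately rests on the same mechanism as the paper's --- namely that, by the explicit construction of the particle system with fresh independent randomness $U_{n,k}$ after each branching, the post-$\tau$ trajectory is, conditionally on $\calF_\tau$, a copy of the underlying Markov process started from $\tilde{X}_\tau$ --- but the route you take to convert this into an $(\calF_t)_{t\geq 0}$-martingale is genuinely different. The paper starts from the Doob martingale $N_t \eqdef \E[\ph(\tilde{X}_T)\,|\,\calF_t]$, which is a martingale with a c\`adl\`ag version for free by right-continuity of the filtration, identifies $\un_{t\geq\tau}N_t=\un_{t\geq\tau}Q^{T-t}(\ph)(\tilde{X}_t)$ by applying the (conditional, not necessarily strong) Markov property at time $t\vee\tau$ given $\calF_\tau$, and then simply notes that $\M_t=N_t-N_{t\wedge\tau}$ is a martingale by Doob's optional sampling --- no case-splitting is needed. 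You instead build the shifted martingale $u\mapsto Q^{T-\tau-u}(\ph)(\tilde{X}_{\tau+u})$ and transfer it back across $\tau$ by hand. This works, and your measurability bookkeeping on $\set{\tau>s}$ (that $\un_{s<\tau\leq t}$ is $\calF_\tau$-measurable and $A\cap\set{s<\tau}\in\calF_{\tau^-}$ for $A\in\calF_s$) is exactly right; the one place you are too quick is on $\set{\tau\leq s}$, where $s-\tau$ and $t-\tau$ are \emph{random} times for the shifted filtration $\calG_u\eqdef\calF_{\tau+u}$, so invoking ``the shifted martingale property between the times $s-\tau$ and $t-\tau$'' really requires optional sampling at the ordered bounded $\calG$-stopping times $(s-\tau)^+\leq(t-\tau)^+$ together with the identification $\calG_{(s-\tau)^+}=\calF_{s\vee\tau}\supseteq\calF_s$. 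This is standard and fixable, but it is precisely the bookkeeping that the paper's formulation $\M_t=N_t-N_{t\wedge\tau}$ is designed to bypass. A final nuance: your closing sentence attributes the key step to the strong Markov property, whereas what is actually used (and what your second paragraph correctly describes) is the conditional restart property supplied by the construction of Section~\ref{sec:def_ips}; the paper is deliberately explicit that the Markov property it applies is ``not necessarily strong''.
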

\begin{proof}
First denote $N_t \eqdef   \E[\ph(\tilde{X}_T)| \calF_t]$, which is a martingale with a c\`adl\`ag version. Existence of a c\`adl\`ag version stems from the right continuity of $(\calF_t)_{t \geq 0}$ and from the classical formulas $$\lim_{\eps \downarrow 0} \E[X | \calF_{t+\eps} ] =  \E[X | \calF_{t^+}] \mbox{ and }
\lim_{\eps \downarrow 0} \E[X | \calF_{t-\eps}]=  \E[X | \calF_{t^-} ],$$ where $\eps$ ranges in any countable dense subset of $\R$ (see for example Corollary~$2.4$, Chapter II in~\cite{ry99}).\medskip

By construction of the particle system, we can then write
\begin{align*}
N_t& = 
\E \b{\left. \E \b{\left.  \ph(\tilde{X}_T) \right | \calF_{\tau \vee t} } \right| \calF_t }\\
&= \E \b{\left. \E \left[ \ph(\tilde{X}_T)\left| \calF_\tau \vee \bigcap_{\eps >0} \sigma( \tilde{X}_{s}, \, \tau \leq s \leq t\vee \tau + \eps)\right. \right] \right| \calF_t },
\end{align*}
In the latter, remark that $\tau$ is measurable with respect to $\calF_{\tau }$ so that the (not necessarily strong) Markov property of $\tilde{X}$ can be applied at time $t \vee \tau$  conditionally on $\calF_\tau$ to get
\[
N_t = \E \b{\left. Q^{T-t\vee \tau}(\ph)(\tilde{X}_{t \vee \tau }) \right | \calF_t },
\]
which yields $\un_{t \geq \tau} N_t = \un_{t \geq \tau} Q^{T-t}(\ph)(\tilde{X}_{t})$. Finally, notice that 
$$ \M_t = \un_{t \geq \tau}\p{ N_t - N_{t \wedge \tau} } = N_t - N_{t \wedge \tau}$$
is a c\`adl\`ag martingale by Doob's optional sampling, which proves the result.

\end{proof}

\begin{Lem}\label{lem:comp}
For any~$\ph$ satisfying Assumption~(CC), one has
 \[
  \bracket{\mathbb{M},\mathbb{M}}_t = \int_\tau^{t \vee \tau} \Gamma_{T-s}(\ph)(\tilde{X}_s) \d s .
 \]
\end{Lem}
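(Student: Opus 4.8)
The plan is to identify the claimed process $A_t\eqdef\int_\tau^{t\vee\tau}\Gamma_{T-s}(\ph)(\tilde X_s)\d s$ with the predictable quadratic variation of $\mathbb{M}$ by invoking the uniqueness of the Doob--Meyer compensator recalled in Section~\ref{remind}. First I would note that $A$ is well defined and bounded thanks to the integrability $\int_0^T\norm{\Gamma_{T-s}(\ph)}_\infty\d s<\infty$ from Assumption~(CC), that it is continuous (hence predictable), and that it is nondecreasing because $\Gamma_{T-s}(\ph)\geq0$ by \eqref{eq:def_cc_2}. It therefore suffices to prove that $t\mapsto \mathbb{M}_t^2-A_t$ is an $(\calF_t)_{t\geq0}$-martingale: the identity $\bracket{\mathbb{M},\mathbb{M}}=A$ then follows from uniqueness of the compensator of the submartingale $\mathbb{M}^2$.

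Next I would exploit the representation obtained in the proof of Lemma~\ref{lem:mart0}, namely $\mathbb{M}_t=N_t-N_{t\wedge\tau}$, where $N_t\eqdef\E\b{\ph(\tilde X_T)\mid\calF_t}$ is the bounded c\`adl\`ag martingale introduced there and $N_\tau=Q^{T-\tau}(\ph)(\tilde X_\tau)$, while $N_t=Q^{T-t}(\ph)(\tilde X_t)$ on $\set{t\geq\tau}$. Introducing
\[
\Phi_t\eqdef\p{Q^{T-t}(\ph)(\tilde X_t)}^2-\int_0^{t\wedge\tau_\partial}\Gamma_{T-s}(\ph)(\tilde X_s)\d s,
\]
I would verify the purely algebraic identity, valid for every $t$ (trivially for $t<\tau$, and by expanding $\p{N_t-N_\tau}^2$ together with $N_\tau=Q^{T-\tau}(\ph)(\tilde X_\tau)$ and $\Gamma_{T-s}(\ph)(\partial)=0$ for $t\geq\tau$),
\[
\mathbb{M}_t^2-A_t=\p{\Phi_t-\Phi_{t\wedge\tau}}-2N_\tau\,\mathbb{M}_t.
\]
It then remains to check that each term on the right-hand side is an $(\calF_t)$-martingale. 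The second one is immediate: since $\un_{(\tau,\infty)}$ is left-continuous and adapted, hence predictable, and $N_\tau\in\calF_\tau$, the integrand $s\mapsto N_\tau\un_{s>\tau}$ is bounded and predictable, so $N_\tau\mathbb{M}_t=\int_0^t N_\tau\un_{s>\tau}\,\d N_s$ is a bounded martingale.

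The crux is the first term $\Phi_t-\Phi_{t\wedge\tau}$, which involves only the increments of $\Phi$ after $\tau$. Here I would apply Assumption~(CC) to the shifted trajectory $(\tilde X_{\tau+u})_{u\geq0}$: by the (not necessarily strong) Markov property of $\tilde X$ conditioned on $\calF_\tau$ --- exactly the device used in the proof of Lemma~\ref{lem:mart0} --- this is a Markov process with killing started from $\tilde X_\tau$, with horizon $T-\tau$ that is deterministic given $\calF_\tau$, and $Q^{T-(\tau+u)}(\ph)=Q^{(T-\tau)-u}(\ph)$. Assumption~(CC) with this horizon makes $u\mapsto \Phi_{\tau+u}-\Phi_\tau$ a conditional martingale, and repackaging it with the indicator $\un_{t\geq\tau}$ and Doob's optional sampling theorem upgrades it to a genuine bounded $(\calF_t)$-martingale $t\mapsto\Phi_t-\Phi_{t\wedge\tau}$. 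I expect this to be the main obstacle: Assumption~(CC) is stated from a \emph{fixed} initial law, with respect to the process's \emph{own} right-continuous filtration, and for the \emph{fixed} horizon $T$, whereas here the initial point $\tilde X_\tau$ and the horizon $T-\tau$ are random and the martingale property must be transferred to the larger particle filtration $(\calF_t)$; the conditioning-on-$\calF_\tau$ mechanism already employed in Lemma~\ref{lem:mart0} is precisely what resolves this. Summing the two martingale contributions shows that $\mathbb{M}^2-A$ is a martingale, which completes the identification $\bracket{\mathbb{M},\mathbb{M}}_t=\int_\tau^{t\vee\tau}\Gamma_{T-s}(\ph)(\tilde X_s)\d s$.
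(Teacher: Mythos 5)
Your proposal is correct and follows essentially the same route as the paper: your term $\Phi_t-\Phi_{t\wedge\tau}$ is exactly the process $L_t$ introduced in the paper's proof, your algebraic identity $\mathbb{M}_t^2-A_t=(\Phi_t-\Phi_{t\wedge\tau})-2N_\tau\mathbb{M}_t$ is the paper's identity, and the crux — transferring the martingale property of Assumption~(CC) past the random time $\tau$ by conditioning on $\calF_\tau$ as in Lemma~\ref{lem:mart0} — is handled in the same way. The only cosmetic difference is that you justify the martingale property of $N_\tau\mathbb{M}_t$ via a predictable stochastic integral while the paper writes it as a stopped-value times a martingale increment; both are fine.
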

\begin{proof}
Let us introduce the notation
$$L_{t} \eqdef \b{ Q^{T-t \vee \tau}(\ph)(\tilde{X}_{t \vee \tau}) }^2 - \b{ Q^{T-\tau}(\ph)(\tilde{X}_\tau) }^2 - \int_\tau^{t \vee \tau} \Gamma_{T-s}(\ph)(\tilde{X}_s) \d s . $$
Note that $t \mapsto L_t$ is obviously adapted and c\`adl\`ag from Lemma~\ref{lem:mart0}. \medskip

First, an elementary calculation shows that
\begin{align*}
 L_t - \mathbb{M}_t^2 + \int_\tau^{t \vee \tau} \Gamma_{T-s}(\ph)(\tilde{X}_s) \d s &= 2  Q^{T-\tau}(\ph)(\tilde{X}_\tau) \M_t \\
 &= 2  Q^{T-\tau \wedge t}(\ph)(\tilde{X}_{\tau \wedge t}) \p{\M_{t} - \M_{\tau \wedge t}} ,
 \end{align*}
so that $t \mapsto L_t - \mathbb{M}_t^2 + \int_\tau^{t \vee \tau} \Gamma_{T-s}(\ph)(\tilde{X}_s) \d s $ defines an adapted martingale. \medskip

Second, we claim that $t \mapsto L_t$ is also a martingale. Indeed, since $L_t$ is $\sigma( \tilde{X}_s, \tau \leq s \leq t)$-measurable, the construction of the particle system ensures that, for any $h\geq 0$, 
$$\E \b{ L_{t+h} | \calF_{\tau \vee t} } = E \b{ L_{t+h} \left| \calF_{\tau} \bigvee \bigcap_{\eps >0} \sigma( \tilde{X}_{s}, \, \tau \leq s \leq t + \eps)\right. }.$$ 
Since $\tau$ is obviously $\calF_\tau$ measurable, we can then apply the martingale property of Assumption~(CC) conditionally on $\calF_{\tau}$ between times $t \vee \tau$ and $(t+h) \vee \tau$ to deduce that $\E \b{L_{t+h} - L_t | \calF_{\tau \vee t } }=0$, which yields $\E \b{L_{t+h} - L_t | \calF_{ t } }=0$ and proves the claim. \medskip

Thus, the process
$$t \mapsto \mathbb{M}_t^2 - \int_\tau^{t \vee \tau} \Gamma_{T-s}(\ph)(\tilde{X}_s) \d s $$
is a martingale, which gives the desired result by definition of the predictable quadratic variation.

\end{proof}

\begin{Lem}\label{akcin}
Under Assumption~(SK), the process
 \[
  t \mapsto \un_{\tilde{X}_t = \partial} - \int_\tau^{t\wedge \tau_\partial} \lambda(\tilde{X}_s) \, \d s
 \]
 is a c\`adl\`ag $(\calF_t)_{t \geq 0}$-martingale. 
\end{Lem}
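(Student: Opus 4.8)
The plan is to reduce the statement to Assumption~(SK) applied to the single particle $\tilde{X}$ restarted at its branching time $\tau=\tau_{n,k}$, following exactly the pattern of Lemmas~\ref{lem:mart0} and~\ref{lem:comp}. Write
\[
A_t \eqdef \un_{\tilde{X}_t = \partial} - \int_\tau^{t\wedge \tau_\partial} \lambda(\tilde{X}_s)\,\d s ,
\]
the integral being understood over the interval $[\tau,t\wedge\tau_\partial]$, hence vanishing when $t\leq\tau$. First I would record two elementary facts: (i) $A$ is $(\calF_t)_{t\geq 0}$-adapted, since both $\un_{\tilde X_t=\partial}$ and the integral are functions of the trajectory of $\tilde X$ up to time $t$; and (ii) $A_t = A_{t\vee\tau}$, with $A_t=0$ for $t\leq\tau$ (indeed $\tilde X_t=X_t\neq\partial$ for $t<\tau<\tau_\partial$, and the integration interval is then empty). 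In particular $A$ is identically $0$ on $[0,\tau]$, hence trivially a martingale there, and $t\mapsto A_t$ is \cadlag because $t\mapsto\un_{\tilde X_t=\partial}$ is \cadlag while the integral term is continuous.

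The core is then to check that $\E[A_{t+h}-A_t\mid\calF_t]=0$ for all $t,h\geq 0$. Since $\tau\vee t$ is a stopping time with $\tau\vee t\geq t$, one has $\calF_t\subseteq\calF_{\tau\vee t}$, so by the tower property it suffices to prove
\[
\E\big[A_{(t+h)\vee\tau}-A_{t\vee\tau}\,\big|\,\calF_{\tau\vee t}\big]=0 ,
\]
which is the increment of $A$ between the stopping times $t\vee\tau$ and $(t+h)\vee\tau$ (here I used $A=A_{\cdot\vee\tau}$).

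The structural input, identical to the one exploited in Lemmas~\ref{lem:mart0} and~\ref{lem:comp}, is that by the construction of the particle system in Section~\ref{sec:def_ips} the shifted trajectory $(\tilde X_{\tau+s})_{s\geq 0}$ is, conditionally on $\calF_\tau$, a copy of the underlying killed Markov process started from $X_\tau$ (namely $\tilde X_{\tau+s}=\chi(s,X_\tau,U_{n,k})$ with $U_{n,k}$ independent of $\calF_\tau$), and that the extra information in $\calF_{\tau\vee t}$ beyond $\calF_\tau\vee\bigcap_{\eps>0}\sigma(\tilde X_u,\ \tau\leq u\leq t+\eps)$ concerns only the other, conditionally independent, particles. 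I would therefore apply Assumption~(SK) to this copy, conditionally on $\calF_\tau$ with initial law $\calL(X_\tau\mid\calF_\tau)$ (legitimate since (SK) holds for every initial distribution, hence for a random one by disintegration): the associated (SK)-martingale is $s\mapsto\un_{\tilde X_{\tau+s}=\partial}-\int_0^{s\wedge(\tau_\partial-\tau)}\lambda(\tilde X_{\tau+u})\,\d u$, which after the substitution $t=\tau+s$ is exactly $A_{\tau+s}$. Its conditional martingale property between the times $(t-\tau)^+$ and $(t+h-\tau)^+$ then yields $\E[A_{(t+h)\vee\tau}-A_{t\vee\tau}\mid\calF_{\tau\vee t}]=0$, completing the argument.

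The main obstacle will be the filtration bookkeeping in the third step: making rigorous that conditioning on $\calF_{\tau\vee t}$ collapses to conditioning on $\calF_\tau$ together with the right-continuous history of $\tilde X$ up to time $t$, and that Assumption~(SK) transfers to the restarted copy through the time shift and the random conditional initial law. Once this conditional independence --- built into the $\chi$-construction and already used in Lemmas~\ref{lem:mart0} and~\ref{lem:comp} --- is in place, the remaining verifications (adaptedness, the \cadlag property, and the substitution in the integral) are routine.
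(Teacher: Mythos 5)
Your proposal is correct and follows essentially the same route as the paper: the paper's proof likewise reduces the martingale property to conditioning on $\calF_{\tau\vee t}$, observes that by construction of the particle system this amounts to conditioning on $\calF_\tau \vee \bigcap_{\eps>0}\sigma(\tilde X_s,\ \tau\le s\le t\vee\tau+\eps)$, and then applies Assumption~(SK) to the restarted trajectory exactly as in Lemma~\ref{lem:mart0}. Your write-up simply makes explicit the filtration bookkeeping and the disintegration over the random initial law $\calL(X_\tau\mid\calF_\tau)$ that the paper leaves implicit.
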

\begin{proof}
 As in the proof of Lemma~\ref{lem:mart0}, by construction of the particle system, conditioning $\un_{\tilde{X}_{t+h} = \partial} - \int_\tau^{(t+h)\wedge \tau_\partial} \lambda(\tilde{X}_s) \, \d s$ with respect to $\calF_{\tau \vee t}$ amounts to condition with respect to 
$$\calF_\tau \bigvee \bigcap_{\eps >0} \sigma( \tilde{X}_{s}, \, \tau \leq s \leq t\vee \tau + \eps).$$ Applying Assumption~(SK) then yields the result.
\end{proof}

A classical consequence of Assumptions (SK) and (CC) is the so-called ``quasi-left continuity'' of the martingales we have just constructed (see for example Definition 2.25, Chapter I in \cite{js03} for a definition). For the sake of completeness, we recall a weaker consequence which will be useful for our purpose. 

\begin{Lem}\label{lem:quasi-left_1}
Let $\sigma,S$ be two stopping times. Assume that $\sigma$ and $(\tilde{X}_t)_{t \geq \tau}$ are conditionally independent given $\calF_S$. Then, under Assumption~(SK), 
$$\P( \tau_\partial = \sigma | \calF_{\sigma^-}) \un_{S < \sigma < +\infty} = 0.$$
\end{Lem}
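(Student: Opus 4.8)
The plan is to reduce the conditional statement to an ordinary (unconditional) probability, and then to exploit that under Assumption~(SK) the killing time $\tau_\partial$ of the single particle $\tilde X$ has a conditionally non-atomic law, while being conditionally independent of $\sigma$ given $\calF_S$.

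First I would observe that $\{S<\sigma<+\infty\}\in\calF_{\sigma^-}$: by the filtration properties recalled in Section~\ref{remind} (the implication $B\in\calF_S\Rightarrow B\cap\set{S<\sigma}\in\calF_{\sigma^-}$, applied with $B=\Omega$) we have $\set{S<\sigma}\in\calF_{\sigma^-}$, and $\sigma$ is itself $\calF_{\sigma^-}$-measurable. Hence $\P(\tau_\partial=\sigma\mid\calF_{\sigma^-})\,\un_{S<\sigma<+\infty}$ is a nonnegative $\calF_{\sigma^-}$-measurable random variable, and since $\un_{S<\sigma<+\infty}$ is $\calF_{\sigma^-}$-measurable, taking expectations yields $\E\b{\P(\tau_\partial=\sigma\mid\calF_{\sigma^-})\,\un_{S<\sigma<+\infty}}=\P(\tau_\partial=\sigma,\ S<\sigma<+\infty)$. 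A nonnegative random variable with zero expectation vanishes almost surely, so it suffices to prove that this last probability is zero.

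Next I would show that, conditionally on $\calF_S$, the law of $\tau_\partial$ has no atom in $(S,+\infty)$. By Lemma~\ref{akcin} the process $M_t=\un_{\tilde X_t=\partial}-\int_\tau^{t\wedge\tau_\partial}\lambda(\tilde X_s)\d s$ is an $(\calF_t)$-martingale; stopped at a deterministic time $t_2$ it is bounded, so optional sampling applies. For deterministic $t_1<t_2$, using $\{S\le t_1\}\in\calF_S\cap\calF_{t_1}$ and $\un_{\tilde X_t=\partial}=\un_{t\ge\tau_\partial}$, I get on $\{S\le t_1\}$ that $\E\b{\un_{t_1<\tau_\partial\le t_2}\mid\calF_S}=\E\b{\int_{t_1\wedge\tau_\partial}^{t_2\wedge\tau_\partial}\lambda(\tilde X_s)\d s\ \big|\ \calF_S}\le\|\lambda\|_\infty(t_2-t_1)$, the boundedness of $\lambda$ from Assumption~(SK) being crucial here. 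Applying this simultaneously over all rational pairs $t_1<t_2$ shows that the random conditional law $\nu\eqdef\Law(\tau_\partial\mid\calF_S)$ satisfies $\nu((t_1,t_2])\le\|\lambda\|_\infty(t_2-t_1)$ for all such rationals with $S\le t_1$; hence almost surely $\nu$ charges no point of $(S,+\infty)$.

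Finally, since $\tau_\partial=\inf\set{t\ge\tau:\ \tilde X_t=\partial}$ is a measurable functional of $(\tilde X_t)_{t\ge\tau}$, the assumed conditional independence of $\sigma$ and $(\tilde X_t)_{t\ge\tau}$ given $\calF_S$ implies that $\sigma$ and $\tau_\partial$ are conditionally independent given $\calF_S$. Writing $\mu\eqdef\Law(\sigma\mid\calF_S)$, conditional independence and Fubini give $\P(\tau_\partial=\sigma,\ S<\sigma<+\infty\mid\calF_S)=\int_{(S,+\infty)}\nu(\set{s})\,\mu(\d s)$, which is almost surely $0$ by the previous step. Taking expectations gives $\P(\tau_\partial=\sigma,\ S<\sigma<+\infty)=0$, and the reduction in the first step concludes. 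I expect the main obstacle to be the middle step: one must upgrade the per-interval martingale estimate, which only controls increments between deterministic times, into a statement valid simultaneously for all rational intervals, so as to obtain genuine non-atomicity of the \emph{random} measure $\Law(\tau_\partial\mid\calF_S)$; the conditional-independence hypothesis is then indispensable, since without it $\sigma$ could anticipate $\tau_\partial$ and the diagonal would not be negligible.
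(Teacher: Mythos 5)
Your proof is correct, but it takes a genuinely different route from the paper's. The paper disposes of this lemma in two lines: it observes that the martingale $L_t=\un_{\tilde X_t=\partial}-\int_\tau^{t\wedge\tau_\partial}\lambda(\tilde X_s)\d s$ has a single jump, of size $+1$, exactly at $\tau_\partial$, so that $\E\b{\Delta L_\sigma|\calF_{\sigma^-}}=\P(\tau_\partial=\sigma|\calF_{\sigma^-})$, and then invokes its general Lemma~\ref{lem:zero_jump2} on jumps of bounded martingales at conditionally independent stopping times --- itself proved by enlarging the filtration so that $\sigma$ is $\calG_0$-measurable, applying optional sampling along the announcing sequence $(1-1/n)\sigma$, and passing to the limit by martingale convergence. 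You instead (a) reduce the conditional statement to the unconditional one $\P(\tau_\partial=\sigma,\,S<\sigma<+\infty)=0$ via the $\calF_{\sigma^-}$-measurability of the indicator, (b) extract from the same martingale, evaluated between deterministic times, the quantitative bound $\nu((t_1,t_2])\leq\norm{\lambda}_\infty(t_2-t_1)$ for $\nu=\Law(\tau_\partial|\calF_S)$, hence non-atomicity of $\nu$ on $(S,+\infty)$, and (c) conclude by conditional independence and Fubini. Both arguments are sound; yours is more elementary (no filtration enlargement, no announcing sequence) and actually yields more, namely that the conditional law of $\tau_\partial$ is dominated by $\norm{\lambda}_\infty$ times Lebesgue measure beyond $S$ --- this is where the boundedness of $\lambda$ in (SK) enters quantitatively. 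The price is loss of generality: the paper's Lemma~\ref{lem:zero_jump2} is reused verbatim for Lemma~\ref{lem:quasi-left_2}, where the relevant martingale $\M(\ph)$ has jumps that are not concentrated at a single identifiable random time and whose compensator is merely continuous, so your ``non-atomic conditional law plus Fubini'' scheme would not transfer there. Two routine points you gloss over but which are easily filled in: the localization identity $\E[\,\cdot\,|\calF_S]\un_{S\leq t_1}=\E\b{\E[\,\cdot\,|\calF_{t_1}]\,\big|\,\calF_S}\un_{S\leq t_1}$ used when passing from $\calF_{t_1}$ to $\calF_S$, and the existence of a regular version of $\Law(\tau_\partial|\calF_S)$ (available since $\tau_\partial$ takes values in $[0,+\infty]$).
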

\begin{proof}
From Lemma~\ref{akcin}, we know that the process
\[
  t \mapsto L_t \eqdef   \un_{\tilde{X}_t = \partial} - \int_\tau^{t\wedge \tau_\partial} \lambda(\tilde{X}_s) \, \d s
 \]
is a c\`adl\`ag martingale. By continuity of the integral, the latter martingale makes only one jump of value $+1$ at time $\tau_\partial$. As a consequence, using Lemma~\ref{lem:zero_jump2}, we have that
$\P( \tau_\partial = \sigma  |  \calF_{\sigma^-} ) = 0 $.
\end{proof}

\begin{Lem}\label{lem:quasi-left_2}
Let $\sigma, S  \in [0, + \infty)$ be two stopping times such that $\sigma$ and $(\tilde{X}_t)_{t \geq \tau}$ are conditionally independent given $\calF_S$. If the function $\ph$ satisfies Assumption~(CC), then 
$$\P( \Delta \mathbb{M}_{\sigma}(\ph) \neq 0| \calF_{\sigma^-}) \un_{S < \sigma < +\infty  } = 0,$$
where $\M_t(\ph)$ is defined by~\eqref{eq:M_tech}.
\end{Lem}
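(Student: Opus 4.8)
The plan is to reproduce, almost verbatim, the argument of Lemma~\ref{lem:quasi-left_1}, simply replacing the martingale $L$ coming from Assumption~(SK) by the martingale $t \mapsto \mathbb{M}_t(\ph)$ of~\eqref{eq:M_tech}, and the input Lemma~\ref{akcin} by Lemma~\ref{lem:comp}. First I would invoke Lemma~\ref{lem:comp}, which under Assumption~(CC) gives
$$\bracket{\mathbb{M},\mathbb{M}}_t = \int_\tau^{t \vee \tau} \Gamma_{T-s}(\ph)(\tilde{X}_s) \d s.$$
Because Assumption~(CC) guarantees $\int_0^T \norm{\Gamma_t(\ph)}_\infty \d t < +\infty$, the right-hand side is an absolutely continuous---hence continuous---increasing process. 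Thus the predictable quadratic variation of $\mathbb{M}(\ph)$ is continuous, which is precisely the condition expressing that $\mathbb{M}(\ph)$ is quasi-left continuous and that all its jump times are totally inaccessible.

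Next I would feed this continuity into Lemma~\ref{lem:zero_jump2}, exactly as the proof of Lemma~\ref{lem:quasi-left_1} does. The heuristic behind that lemma is that, for a martingale whose angle bracket is continuous, the expected squared jump at a predictable-type time is governed by the (vanishing) jump $\Delta\bracket{\mathbb{M},\mathbb{M}}_\sigma = 0$, so that $\mathbb{M}(\ph)$ can only jump at totally inaccessible times. The conditional-independence assumption between $\sigma$ and $(\tilde{X}_t)_{t \geq \tau}$ given $\calF_S$ is exactly what is required to apply Lemma~\ref{lem:zero_jump2}: it ensures that $\sigma$, restricted to $\{S < \sigma < +\infty\}$, cannot be aligned with the totally inaccessible jump structure carried by $\tilde{X}$. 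The conclusion is then $\P(\Delta\mathbb{M}_\sigma(\ph) \neq 0 \mid \calF_{\sigma^-}) \un_{S < \sigma < +\infty} = 0$, as claimed.

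The main obstacle is the step handled by Lemma~\ref{lem:zero_jump2}, namely turning continuity of the angle bracket into the vanishing of the conditional jump probability. In Lemma~\ref{lem:quasi-left_1} this was transparent because $L$ made a single jump of the deterministic size $+1$ at $\tau_\partial$, so that the event $\{\Delta L_\sigma \neq 0\}$ was literally $\{\tau_\partial = \sigma\}$. Here $\mathbb{M}(\ph)$ may jump by random amounts and at several of the jump times of $\tilde{X}$, so one cannot identify its jump with a single fixed event and must instead argue entirely through the continuity of $\bracket{\mathbb{M},\mathbb{M}}$ supplied by Lemma~\ref{lem:comp}. Once Lemma~\ref{lem:zero_jump2} is granted, the remaining verifications---measurability of the quantities involved and the localization on $\{S < \sigma < +\infty\}$---are routine and identical to those of Lemma~\ref{lem:quasi-left_1}.
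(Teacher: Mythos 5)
You have assembled the right ingredients (Lemma~\ref{lem:comp} for the continuity of $\bracket{\mathbb{M},\mathbb{M}}$, Lemma~\ref{lem:zero_jump2} for the vanishing of a conditional jump expectation), but there is a genuine gap at exactly the point you flag as ``the main obstacle'': you never close it. Lemma~\ref{lem:zero_jump2}, applied to the bounded martingale $\mathbb{M}(\ph)$ itself, only yields
$\E\b{\Delta \mathbb{M}_\sigma(\ph) \mid \calF_{\sigma^-}}\un_{S<\sigma<+\infty}=0$,
i.e.\ that the jump has zero conditional \emph{mean}. This does not imply $\P(\Delta\mathbb{M}_\sigma(\ph)\neq 0\mid\calF_{\sigma^-})\un_{S<\sigma<+\infty}=0$, since the jump of $\mathbb{M}(\ph)$ is a signed random variable. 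In Lemma~\ref{lem:quasi-left_1} the same deduction was legitimate only because the martingale there makes a single jump of the fixed value $+1$, so that a vanishing conditional mean does force a vanishing conditional probability; as you yourself observe, that structure is absent here. Saying that the step is ``handled by Lemma~\ref{lem:zero_jump2}'' and that the rest is routine leaves the crux of the proof unproved: Lemma~\ref{lem:zero_jump2} contains no statement about angle brackets or squared jumps.

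The missing idea, which is how the paper proceeds, is to transfer the problem to the quadratic variation. Since $\Delta\b{\mathbb{M},\mathbb{M}}_t=(\Delta\mathbb{M}_t)^2\geq 0$, the event $\{\Delta\mathbb{M}_\sigma(\ph)\neq 0\}$ coincides with $\{\Delta\b{\mathbb{M},\mathbb{M}}_\sigma\neq 0\}$, and for this \emph{nonnegative} jump a vanishing conditional expectation does imply a vanishing conditional probability. One therefore applies Lemma~\ref{lem:zero_jump2} not to $\mathbb{M}(\ph)$ but to the compensated quadratic variation $L_t\eqdef \bracket{\mathbb{M},\mathbb{M}}_t-\b{\mathbb{M},\mathbb{M}}_t$, which is a martingale; since $\bracket{\mathbb{M},\mathbb{M}}$ is continuous by Lemma~\ref{lem:comp}, $\Delta L_\sigma=-\Delta\b{\mathbb{M},\mathbb{M}}_\sigma=-(\Delta\mathbb{M}_\sigma)^2$, and Lemma~\ref{lem:zero_jump2} gives
$\E\b{(\Delta\mathbb{M}_\sigma(\ph))^2\mid\calF_{\sigma^-}}\un_{S<\sigma<+\infty}=0$,
from which the claim follows. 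Your sentence about ``the expected squared jump being governed by the vanishing jump of the angle bracket'' gestures at this, but attributing it to Lemma~\ref{lem:zero_jump2} as stated is incorrect; the passage through $\Delta\b{\mathbb{M},\mathbb{M}}=(\Delta\mathbb{M})^2$ and the choice of the auxiliary martingale $L$ must be made explicit for the proof to be complete.
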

\begin{proof}
Let us recall that for any c\`adl\`ag martingale $\mathbb{M}$, $\Delta \b{\mathbb{M},\mathbb{M}} = \p{\Delta \mathbb{M}}^2$. Therefore, proving that $\P( \mathbb{M}_{\sigma} \neq \mathbb{M}_{\sigma^-} | \calF_S) \un_{S < \sigma < +\infty  } = 0$ is equivalent to show that
$$\P( \b{\mathbb{M},\mathbb{M}}_{\sigma} \neq \b{\mathbb{M},\mathbb{M}}_{\sigma^-} | \calF_S) \un_{S < \sigma < +\infty  } = 0,$$ 
which in turn is equivalent for square integrable martingales to 
$$ \E \b{\Delta \b{\mathbb{M},\mathbb{M}}_{\sigma}  | \calF_S}  \un_{S < \sigma < +\infty  }= 0.$$ 
According to Lemma~\ref{lem:comp}, under Assumption~(CC) the compensator of $\b{\mathbb{M},\mathbb{M}}_t$ is given by
$$t \mapsto \bracket{\M,\M}_t = \int_\tau^{t \wedge \tau_\partial} \Gamma_{T-s}(\ph)(\tilde{X}_s) \d s,$$
and is continuous with respect to $t$.\medskip

Finally, remember that the process $t \mapsto L_t \eqdef   \bracket{\mathbb{M},\mathbb{M}}_t - \b{\mathbb{M},\mathbb{M}}_t $ is a martingale so that, thanks to Lemma~\ref{lem:zero_jump2},
$$\E \b{\Delta \b{\mathbb{M},\mathbb{M}}_{\sigma} | \calF_S} \un_{S < \sigma < +\infty  } = \E \b{\Delta \bracket{\mathbb{M},\mathbb{M}}_{\sigma}      }  \un_{S < \sigma < +\infty  }=0,$$

and the result follows.
\end{proof}

\subsection{Stopping times and martingales}

We first start with an intuitive fact that can be easily verified.
\begin{Lem}\label{albcios}
 Let $\tau$ be a stopping time on a filtered probability space, and $U$ an integrable and $\calF_\tau$ measurable random variable such that $\E \b {U | \calF_{\tau-}}=0$. Then the process $t \mapsto U \un_{t \geq  \tau}$ is a c\`adl\`ag martingale. 
\end{Lem}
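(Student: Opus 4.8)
The plan is to check, in order, the three properties that make up the assertion: that $t\mapsto U\un_{t\ge\tau}$ is càdlàg, that it is adapted and integrable, and that it satisfies the martingale identity. The first two are essentially bookkeeping built on the standard facts about $\calF_\tau$ recalled earlier, and the whole content of the statement sits in the third, which is the only place where the hypothesis $\E[U\mid\calF_{\tau-}]=0$ enters.

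For the path regularity I would note that $t\mapsto\un_{t\ge\tau}$ is a right-continuous step function with a single left-limited jump at $\tau$, so scaling by the fixed random variable $U$ leaves the trajectory càdlàg. Adaptedness rests on the standard fact that an $\calF_\tau$-measurable $U$ satisfies $U\un_{\tau\le t}\in\calF_t$ for every $t$: this is exactly the defining property $A\in\calF_\tau\Leftrightarrow A\cap\set{\tau\le t}\in\calF_t$ for $U=\un_A$, and it passes to general $U$ by approximation with simple functions. Integrability is immediate from $|U\un_{t\ge\tau}|\le|U|$ and the integrability of $U$.

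It then remains to prove $\E[U\un_{\tau\le t}\mid\calF_s]=U\un_{\tau\le s}$ for $0\le s\le t$. First I would split $U\un_{\tau\le t}=U\un_{\tau\le s}+U\un_{s<\tau\le t}$; the first summand is $\calF_s$-measurable by the adaptedness step, so it suffices to show $\E[U\un_{s<\tau\le t}\un_A]=0$ for every $A\in\calF_s$. The key observation, and the one delicate point of the proof, is that the restricting event lands in $\calF_{\tau-}$, not merely in $\calF_\tau$. Indeed, by the very definition of $\calF_{\tau-}$ one has $\set{s<\tau}\in\calF_{\tau-}$ and $\set{t<\tau}\in\calF_{\tau-}$ (take $B=\Omega$ at times $s$ and $t$), and also $A\cap\set{s<\tau}\in\calF_{\tau-}$ (take $B=A\in\calF_s$); hence $A\cap\set{s<\tau\le t}=(A\cap\set{s<\tau})\setminus\set{t<\tau}$ belongs to $\calF_{\tau-}$, so the indicator $\un_A\un_{s<\tau\le t}$ is $\calF_{\tau-}$-measurable.

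Once this measurability is secured the computation closes at once: pulling the $\calF_{\tau-}$-measurable indicator out of the conditional expectation gives
$$\E\b{U\un_{s<\tau\le t}\un_A}=\E\b{\un_A\un_{s<\tau\le t}\,\E\b{U\mid\calF_{\tau-}}}=0$$
by the hypothesis. I expect the only genuine care to be the measurability bookkeeping of the previous paragraph, namely recognizing that $\set{s<\tau\le t}$ sits inside $\calF_{\tau-}$ so that the $\calF_{\tau-}$-conditional expectation can be exploited; everything else is routine.
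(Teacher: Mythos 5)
Your proof is correct and follows essentially the same route as the paper's: the same decomposition $\un_{t\ge\tau}=\un_{\tau\le s}+\un_{s<\tau\le t}$, the same observation that the relevant events lie in $\calF_{\tau-}$ so that the hypothesis $\E\b{U\mid\calF_{\tau-}}=0$ can be applied. If anything, your version is slightly more explicit than the paper's about why $A\cap\set{s<\tau\le t}\in\calF_{\tau-}$, which is the one point where care is needed.
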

\begin{proof}
 Let $t > s$ be given. First remark that $\un_{t \geq  \tau} = \un_{s \geq  \tau} +  \un_{s <  \tau} \un_{t \geq  \tau} $. Then by definition of $\calF_\tau$, $U\un_{s \geq  \tau} $ is $\calF_s$-measurable, so that
 \[
 \E \b{ U \un_{t \geq  \tau} | \calF_s} =  U\un_{s \geq  \tau} + \E \b{U \un_{t \geq  \tau} 
  | \calF_s}\un_{s <  \tau}.
  \]
Next, by definition of $\calF_{\tau-}$, $\E \b{U \un_{t \geq  \tau} 
  | \calF_s}\un_{s <  \tau} $ and $\un_{t \geq  \tau}$ are $\calF_{\tau-}$-measurable, so that
  \[
   \E \b{U \un_{t \geq  \tau} 
  | \calF_s}\un_{s <  \tau}  =\E \b{\E \b{ U | \calF_{\tau-} }\un_{t \geq  \tau} 
  | \calF_s}\un_{s <  \tau}  =0.
   \]
The result follows.
\end{proof}

\begin{Lem}\label{lem:zero_jump2}
Let $S,\sigma $ be two stopping times, and let $L$ be a c\`adl\`ag bounded martingale. 
Assume that $\sigma$ and $\p{L_t}_{ t \geq 0 }$ are independent conditionally on $\calF_S$. 
Then $\E \b{\Delta L_\sigma|\calF_{\sigma^-}}\un_{S<\sigma<+\infty} =0$.
\end{Lem}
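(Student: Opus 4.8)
The plan is to prove the equivalent identity $\E\b{L_\sigma\un_C}=\E\b{L_{\sigma^-}\un_C}$ for every $C\in\calF_{\sigma^-}$ with $C\subseteq\set{S<\sigma<\infty}$, which by definition of conditional expectation is exactly the assertion $\E\b{\Delta L_\sigma|\calF_{\sigma^-}}\un_{S<\sigma<\infty}=0$ (note that $\set{S<\sigma<\infty}\in\calF_{\sigma^-}$ by the stopping-time facts recalled in Section~\ref{remind}: $\set{S<\sigma}\in\calF_{\sigma^-}$ and $\sigma$ is $\calF_{\sigma^-}$-measurable). The single analytic input is that a c\`adl\`ag martingale has compensated jumps at deterministic times: since $\E\b{L_t|\calF_s}=L_s$ for $s<t$, letting $s\uparrow t$ gives $\E\b{L_t|\calF_{t^-}}=L_{t^-}$, i.e. $\E\b{\Delta L_t|\calF_{t^-}}=0$. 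The whole point of the conditional independence hypothesis is to let $\sigma$ play, as seen from $L$, the role of such a ``deterministic'' time.

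First I would discretize $\sigma$ on the dyadic grid. Let $\sigma^{(m)}\downarrow\sigma$ be the smallest dyadic time of level $m$ that is $\ge\sigma$; it is a stopping time, $L_{\sigma^{(m)}}\to L_\sigma$ by right continuity, while the left endpoints $\sigma_{(m)}=\sigma^{(m)}-2^{-m}$ satisfy $\sigma_{(m)}\uparrow\sigma$ with $\sigma_{(m)}<\sigma$, so $L_{\sigma_{(m)}}\to L_{\sigma^-}$. Since $L$ is bounded it is uniformly integrable, and optional sampling yields $\E\b{L_{\sigma^{(m)}}\un_C}=\E\b{L_\sigma\un_C}$ exactly (because $C\in\calF_\sigma$ and $\sigma\le\sigma^{(m)}$). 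Hence everything reduces to showing $\Sigma_m\eqdef\sum_k\E\b{(L_b-L_a)\un_C\un_{\sigma\in(a,b]}}\to 0$, with $a=k2^{-m}$ and $b=(k+1)2^{-m}$. I would then split $\un_{\sigma\in(a,b]}=\un_{\sigma>a}-\un_{\sigma>b}$. For the ``past'' part $\un_{\sigma>a}\in\calF_a$, I replace $\un_C$ by a $\calF_a$-measurable indicator $\un_{C_a}$, which is legitimate because an $\calF_{\sigma^-}$-measurable variable is a predictable process evaluated at $\sigma$ and the window $(a,\sigma)$ shrinks as $m\to\infty$; the resulting $\calF_a$-measurable weight is then annihilated by the martingale increment, $\E\b{L_b-L_a|\calF_a}=0$. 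For the ``future'' part $\un_{\sigma>b}$, I would invoke the conditional independence: after aligning the random time $S$ with the grid by working with $a\vee S$ and $b\vee S$ (so that on $\set{S<\sigma}$ the relevant intervals eventually lie to the right of $S$), conditioning on $\calF_S$ decouples $\set{\sigma>b}$ from the increment $L_b-L_a$ and kills this contribution as well.

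The main obstacle is precisely this last decoupling, namely controlling the dependence of $C$ on the path of $L$ inside the shrinking window $(a,\sigma)$: conditional independence of $\sigma$ and $L$ given $\calF_S$ does not by itself decorrelate $\un_C$ from the increment $L_b-L_a$, so I must combine (i) the left-continuous (predictable) approximation of $C$, whose error vanishes as $m\to\infty$ by right continuity and boundedness of $L$ through dominated convergence, with (ii) the compensation $\E\b{L_b-L_a|\calF_a}=0$, and (iii) the factorisation of $\set{\sigma>b}$ through $\calF_S$. Conceptually, the clean reason the limit vanishes is that the conditional law of $\sigma$ given $\calF_S$ can charge jump times of $L$ only through atoms sitting at $\calF_S$-measurable (hence predictable, since announced by $S<\sigma$) times, at which the martingale jump is compensated, while its continuous part misses the countable random set of jump times of $L$; the discretization is merely the elementary device turning this heuristic into estimates.
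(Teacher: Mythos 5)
Your discretization is set up correctly, but as written it proves nothing: once you reduce (by a $\pi$--$\lambda$ argument) to the generating sets $C=B\cap\set{t<\sigma}$ with $B\in\calF_t$, the weight $\un_C\un_{\sigma\in(a,b]}$ equals $\un_B\un_{\sigma\in(a,b]}$ for $a\geq t$, so your ``past'' term $\sum_k\E\b{(L_b-L_a)\un_B\un_{\sigma>a}}$ vanishes identically (since $\un_B\un_{\sigma>a}$ is $\calF_a$-measurable and $\E\b{L_b-L_a|\calF_a}=0$), while the ``future'' term telescopes exactly: $\sum_k (L_b-L_a)\un_{\sigma>b}=L_{\sigma_{(m)}}-L_{t_m}$ (with $t_m$ the first grid point $\geq t$), whose expectation against $\un_B$ converges to $\E\b{(L_{\sigma^-}-L_t)\un_B\un_{\sigma>t}}=-\E\b{\Delta L_\sigma\un_C}$, the last equality by optional sampling at $\sigma\vee t$. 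So the whole content of the lemma sits in the future term, and your step (iii) is the entire proof, not a clean-up. That step does not go through as stated: conditioning on $\calF_S$ factorizes $\E\b{(L_b-L_a)\un_{\sigma>b}\,g}$ only for $\calF_S$-measurable weights $g$, whereas your weight ($\un_B$, or the predictable approximation $P_a$ of $\un_C$) is merely $\calF_a$-measurable. Conditional independence of $\sigma$ and $L$ given $\calF_S$ does not propagate to conditional independence given the larger $\sigma$-field $\calF_a$: an $\calF_a$-measurable weight can recouple $\set{\sigma>b}$ with the increment $L_b-L_a$, and nothing in your argument excludes this. This is exactly the difficulty of the lemma, not a technicality.

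The paper resolves it by a different device: adjoin $\sigma$ to the filtration at time $S$ (first at time $0$, then reduce the general case by replacing $L_t$ with $L_{t\vee S}$), prove that $L$ remains a martingale for the enlarged filtration $\calG_t=\calF_t\vee\sigma(\sigma)$ --- this is the single place where the conditional independence is spent --- and then observe that $\sigma$ is a predictable $\calG$-stopping time, announced by $\sigma_n=(1-1/n)\sigma\un_{\sigma<+\infty}+n\un_{\sigma=+\infty}$, so that optional sampling along $(\sigma_n)$ together with martingale convergence yields $\E\b{L_\sigma|\calG_{\sigma^-}}=L_{\sigma^-}$, hence the claim on the smaller $\sigma$-field $\calF_{\sigma^-}$. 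If you insist on the Riemann-sum scheme, you must prove the cell-by-cell statement $\E\b{(L_b-L_a)\un_{\sigma>b}\mid\calF_a}\un_{S\leq a}=0$ for every grid point $a$, which is precisely the assertion that $L$ stays a martingale once $\sigma$ is adjoined to the filtration; at that point you have reproduced the paper's key step and the discretization becomes superfluous. Finally, your closing heuristic (atoms of the conditional law of $\sigma$ versus the countable set of jump times of $L$) controls statements of the type $\P(\Delta L_\sigma\neq 0)=0$ for a non-atomic conditional law, but not the conditional expectation $\E\b{\Delta L_\sigma|\calF_{\sigma^-}}$ that the lemma requires.
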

\begin{proof}
{\it Step~$1$: Case $S=0$.} Let us prove the result in the special case $S=0$. 
Let us define the enlarged filtration $(\calG_t)_{t\geq 0}$ as the smallest filtration containing $\calF_t$ for each $t\geq 0$ and such that $\sigma$ is $\calG_0$ measurable. The independence of $\p{L_t}_{ t \geq 0 }$ and $\sigma$ 
implies that $L_t$ is still a $\calG_t$-martingale, and that for any $n\ge 1$, 
\begin{align*}
\sigma_n=(1-1/n)\sigma1_{\sigma<+\infty}+n1_{\sigma=+\infty}
\end{align*}
is a stopping time. Since $L$ is bounded and thus uniformly integrable, we can apply Doob's optional sampling theorem:
\begin{align*}
\E \b{L_{\sigma}|\calG_{\sigma_n} }= L_{\sigma_n}.
\end{align*}
Notice that $\bigvee_n\calG_{\sigma_n}=\calG_{\sigma^-}$ since $\sup_n \sigma_n = \sigma$ with $\sigma_n < \sigma $ on the event $\set{\sigma > 0}$.
One gets with the martingale convergence theorem in the limit $n \to +\infty$:
\begin{align*}
\E \b{L_{\sigma}|\calG_{\sigma^-}} \un_{\sigma > 0}= L_{\sigma^{-}}\un_{\sigma > 0}.
\end{align*}

{\it Step~$2$: Any $S$.} For any $t\ge 0$, $t\vee S$ is a stopping time, thus, 
by Doob's optional sampling theorem, $M_t \eqdef L_{t \vee S}$ is a 
$\calG_t \eqdef\calF_{t \vee S}$-martingale. Applying Step~1 to $M,\calG$ 
($\sigma$ and $\p{M_t}_{ t \geq 0 }$ are indeed independent given $\calF_S$), we get
\begin{align*}
\E \b{L_{\sigma\vee S}|\calF_{(\sigma\vee S)^-} }\un_{\sigma > 0}= L_{(\sigma\vee S)^{-}}\un_{\sigma > 0}.
\end{align*}
Since $\set{\sigma > S}\in\calF_{\sigma^-}$ (cf.~Section\,\ref{remind}), 
multiplying both sides by $\un_{\sigma > S}$
leads to
\begin{align*}
\E \b{L_\sigma|\calF_{(\sigma\vee S)^-} }\un_{\sigma > S}= L_{\sigma^{-}}\un_{\sigma > S}.
\end{align*}
It remains to check that for any bounded variable $X$, one has 
\begin{align*}
\E(X | \calF_{(\sigma \vee S)^-} ) \un_{S < \sigma} =
\E(X | \calF_{\sigma^-}) \un_{S < \sigma}.
\end{align*}
For this, it suffices to prove that the l.h.s.~belongs to $\calF_{\sigma^-}$, or more generally that 
for any $A\in\calF_{(\sigma \vee S)^-} $ the set $A \cap \set{\sigma > S}$ belongs to $\calF_{\sigma^-}$
(we have already pointed out that $\set{\sigma > S}\in\calF_{\sigma^-}$).
Let us check this for the generating sets $A=B\cap \{t<\sigma\vee S\}$, with $B\in\calF_t$. Such a set
can be rewritten as
\begin{align*}
B\cap \{t<\sigma\vee S\}\cap \{\sigma> S\}=
\Big(B\cap \{t<\sigma\}\Big)\cap \{\sigma> S\}
\end{align*}
which is the intersection of two sets of $\calF_{\sigma^-}$, hence the desired result.
\end{proof}

\bibliographystyle{plain}
\bibliography{biblio-cdgr}
\end{document}